\theoremstyle{plain}
\newtheorem{thm}{Theorem}[section]
\newtheorem*{thm*}{Theorem}
\newtheorem{prop}[thm]{Proposition}
\newtheorem{lem}[thm]{Lemma}
\newtheorem{cor}[thm]{Corollary}
\theoremstyle{definition}
\newtheorem{mydef}{Definition}[section]
\theoremstyle{remark}
\newtheorem*{rem}{Remark}
\def\forcehmode{\hskip0pt\relax}
\let\myskip=\medskip
\def\definebb#1=#2.{\def#1{{{\mathbb #2}^{\vphantom{x}}}}}
\def\Cal#1{\mathcal{#1}}
\def\calA{{\Cal A}}
\def\calK{{\Cal K}}
\def\srf{P}
\def\pln{U}
\def\bdl{L}
\def\autfac{(\pln,\lat,\bdl)}
\def\subgrp{{\tilde\lat}}
\def\grpsubgrp{\lat/\subgrp}
\def\tngpln{T_{\pln}}
\def\tngsrf{T_{\srf}}
\def\qtbdl{E}
\def\KX{\calK_X}
\def\cpxprj{{\mathbb C\operatorname{P}^1}}
\def\cpxpln{\c}
\def\hyppln{\hyp}
\def\tMod{\widetilde{\Mod}}
\def\Ggp{\Ga_{g;p_1,\dots,p_r}}
\def\tTgp{\tilde T_{g;p_1,\dots,p_r}}
\def\Tgp{T_{g;p_1,\dots,p_r}}
\def\Mgp{M_{g;p_1,\dots,p_r}}
\def\tModgp{\tMod_{g;p_1,\dots,p_r}}
\def\Modgp{\Mod_{g;p_1,\dots,p_r}}
\def\Modmgp{\Mod^m_{g;p_1,\dots,p_r}}
\def\Smgp{S^m}
\def\lev{{s}}
\def\levm{\lev_m}
\def\lat{\Ga}
\def\lats{\lat^*}
\def\centrgen{{u}}
\def\piorb{\pi}
\def\piorbO{\pi^0}
\def\tC{{\tilde C}}
\def\tG{{\tilde G}}
\def\Gm{{G_m}}
\def\bA{{\bar A}}
\def\bB{{\bar B}}
\def\bC{{\bar C}}
\def\bV{{\bar V}}
\def\abcd{{a\,b\choose c\,d}}
\def\hyp{\h}
\def\PSL{\MathOpPSL(2,\r)}
\def\SL{\MathOpSL(2,\r)}
\def\tPSL{\widetilde{\MathOpPSL}(2,\r)}
\def\dd{\partial}
\def\al{{\alpha}}
\def\be{{\beta}}
\def\Ga{{\Gamma}}
\def\ga{{\gamma}}
\def\de{{\delta}}
\def\De{{\Delta}}
\def\si{{\sigma}}
\def\hsi{{\hat\si}}
\def\arf{\si}
\def\st{\,\,\big|\,\,}
\def\<{\langle}
\def\>{\rangle}
\def\ie{i.e.\xspace}
\let\ge=\geqslant
\let\le=\leqslant
\DeclareMathOperator{\Aut}{Aut}
\DeclareMathOperator{\Arf}{Arf}
\DeclareMathOperator{\id}{id}
\DeclareMathOperator{\Hol}{Hol}
\DeclareMathOperator{\MathOpPSL}{PSL}
\DeclareMathOperator{\Mod}{Mod}
\DeclareMathOperator{\MathOpSL}{SL}
\DeclareMathOperator{\MathOpSp}{Sp}
\DeclareMathOperator{\ord}{ord}
\DeclareMathOperator{\PSU}{PSU}
\let\Im=\undefined \DeclareMathOperator{\Im}{Im}
\let\mod=\undefined \DeclareMathOperator{\mod}{mod}
\begin{document}

\author[Sergey Natanzon]{Sergey Natanzon}
\address{Moscow State University, Korp.~A, Leninske Gory, 11899 Moscow, Russia}
\address{Institute of Theoretical and Experimental Physics, Moscow, Russia}
\address{Independent University of Moscow, Bolshoi Vlasevsky Pereulok, 11 Moscow, Russia}
\email{natanzon@mccme.ru}
\author[Anna Pratoussevitch]{Anna Pratoussevitch}
\address{Department of Mathematical Sciences\\ University of Liverpool\\ Peach Street \\ Liverpool L69~7ZL}
\email{annap@liv.ac.uk}

\title[Moduli Spaces of Gorenstein Singularities]
{Topological Invariants and Moduli Spaces of Gorenstein Quasi-Homogeneous Surface Singularities}


\begin{date}  {\today} \end{date}

\thanks{Research partially supported by grants RFBR 10-01-00678-a, NSh-8462.2010.1 and SFB 611 (DFG)}

\begin{abstract}

We describe all connected components of the space of hyperbolic Go\-ren\-stein quasi-homogeneous surface singularities.
We prove that any connected component is homeomorphic to a quotient of ${\mathbb R}^d$ by a discrete group.


\end{abstract}

\subjclass[2000]{Primary 14J60, 30F10; Secondary 14J17, 32S25}
















\keywords{$\mathbb{Q}$-Gorenstein quasi-homogeneous surface singularities, Arf functions, lifts of Fuchsian groups}

\maketitle

\section{Introduction}

In this paper we study moduli spaces of hyperbolic Gorenstein quasi-homo\-ge\-ne\-ous surface singularities (GQHSS). 
A normal isolated singularity of dimension~$n$ is Gorenstein
if and only if there is a nowhere vanishing $n$-form on a punctured neighbourhood of the singular point.
GQHSS can be spherical, Euclidean or hyperbolic.
In this paper we are going to study the largest class, the class of hyperbolic GQHSS.
See a remark at the end of the paper for more information about the other two classes of GQHSS.

According to work of Dolgachev~\cite{Dolgachev:1983} hyperbolic GQHSS of level~$m$ are in 1-to-1 correspondence
with $m$-th roots of tangent bundles of Riemann orbifolds,
i.e.\ with (singular) complex line bundles on orbifolds such that their $m$-th tensor power coincides with the tangent bundle.
We find conditions for the existence of GQHSS of level~$m$ with orbifold of given signature.
We then consider the space of all GQHSS of level~$m$ with orbifolds of given signature and genus~$g\ge0$.
We show that the space is connected if $g=0$ or if $g>1$ and $m$ is odd and that the space has $2$~connected components if $g>1$ and $m$ is even.
We also determine the number of components in the case~$g=1$.
Moreover we prove that any connected component is homeomorphic to a quotient of~$\r^d$ by a discrete group action.

\myskip
The main technical tool is the following:
We assign (Theorem~\ref{thm-corresp}) to a hyperbolic GQHSS of level~$m$ with corresponding Fuchsian group~$\lat$
a unique function on the space of homotopy classes of simple contours on the orbifold~$P=\hyp/\lat$ with values in~$\z/m\z$,
the associated $m$-Arf function.

\myskip\noindent
The $m$-Arf functions are described by simple geometric properties:

\myskip\noindent
{\bf Definition:}
Let $P$ be a Riemann orbifold and $p\in P$.
We denote by $\piorbO(P,p)$ the set of all non-trivial elements of the orbifold fundamental group $\piorb(P,p)$ that can be represented by simple contours.
An {\it $m$-Arf function\/} is a function
$$\arf:\piorbO(P,p)\to\z/m\z$$
satisfying the following conditions
\begin{enumerate}[1.]
\item
$\arf(bab^{-1})=\arf(a)$ for any elements~$a,b\in\piorbO(P,p)$,
\item
$\arf(a^{-1})=-\arf(a)$ for any element~$a\in\piorbO(P,p)$ that is not of order~$2$,
\item
$\arf(ab)=\arf(a)+\arf(b)$
for any elements~$a$ and~$b$ which can be represented by a pair of simple contours in $P$
intersecting in exactly one point~$p$ with $\<a,b\>\ne0$,
\item
$\arf(ab)=\arf(a)+\arf(b)-1$
for any elements~$a,b\in\piorbO(P,p)$ such that the element~$ab$ is in~$\piorbO(P,p)$
and the elements~$a$ and~$b$ can be represented by a pair of simple contours in $P$
intersecting in exactly one point~$p$ with $\<a,b\>=0$
and placed in a neighbourhood of the point~$p$ as shown in Figure~\ref{fig-neg-pair}.


\begin{figure}
  \begin{center}
    \forcehmode
      \bgroup
        \beginpicture
          \setcoordinatesystem units <25 bp,25 bp>
          \multiput {\phantom{$\bullet$}} at -2 0 2 2 /
          \plot 0 0 -2 1 /
          \arrow <7pt> [0.2,0.5] from 0 0 to -1 0.5
          \plot 0 0 -1 2 /
          \arrow <7pt> [0.2,0.5] from -1 2 to -0.5 1
          \plot 0 0 1 2 /
          \arrow <7pt> [0.2,0.5] from 0 0 to 0.5 1
          \plot 0 0 2 1 /
          \arrow <7pt> [0.2,0.5] from 2 1 to 1 0.5
          \put {$a$} [br] <0pt,2pt> at -2 1
          \put {$a$} [br] <0pt,2pt> at -1 2
          \put {$b$} [bl] <0pt,2pt> at 2 1
          \put {$b$} [bl] <0pt,2pt> at 1 2
        \endpicture
      \egroup
  \end{center}
  \caption{$\hsi(ab)=\hsi(a)+\hsi(b)-1$}
  \label{fig-neg-pair}
\end{figure}

\item
For any elliptic element~$c$ of order~$p$ we have $p\cdot\arf(c)+1\equiv0\mod\,m$.
\end{enumerate}

\myskip\noindent
In order to be able to state our main results we need to give some definitions and notation.

\myskip\noindent
{\bf Definition:}
Let $\lat$ be a Fuchsian group of signature~$(g:p_1,\dots,p_r)$ and let $P=\hyp/\lat$ be the corresponding orbifold.
Let $\arf:\piorbO(P,p)\to\z/m\z$ be an $m$-Arf function.
We define the {\it Arf invariant\/} $\de=\de(P,\arf)$ of~$\arf$ as follows:
If~$g>1$ and $m$ is even then we set $\de=0$ if there is a standard basis $\{a_1,b_1,\dots,a_g,b_g,c_{g+1},\dots,c_n\}$ of the fundamental group $\piorb(P,p)$ such that
$$\sum\limits_{i=1}^g(1-\arf(a_i))(1-\arf(b_i))\equiv0\mod~2$$
and we set $\de=1$ otherwise.
If~$g>1$ and $m$ is odd then we set $\de=0$.
If~$g=0$ then we set $\de=0$.
If~$g=1$ then we set
$$\de=\gcd(m,p_1-1,\dots,p_r-1,\arf(a_1),\arf(b_1)),$$
where $\{a_1,b_1,c_2,\dots,c_{r+1}\}$ is a standard basis of the fundamental group $\piorb(P,p)$.
The {\it type of the $m$-Arf function\/}~$(P,\arf)$ is the tuple $(g,p_1,\dots,p_r,\de)$,
where $\de$ is the Arf invariant of $\arf$ defined above.

\myskip\noindent
{\bf Definition:}
We denote by~$\Smgp(t)=\Smgp(g,p_1,\dots,p_r,\de)$ the set of all GQHSS of level~$m$ and signature~$(g:p_1,\dots,p_r)$
such that the associated $m$-Arf function is of type~$t=(g,p_1,\dots,p_r,\de)$.

\myskip\noindent
The following Theorem summarizes the main results:

\myskip\noindent
{\bf Theorem:}
\begin{enumerate}[1)]
\item
Two hyperbolic GQHSS are in the same connected component of the space of all hyperbolic GQHSS
if and only if they are of the same type.
In other words, the connected components of the space of all hyperbolic GQHSS are those sets $\Smgp(t)$ that are not empty.
\item
The set $\Smgp(t)$ is not empty if and only if $t=(g,p_1,\dots,p_r,\de)$ has the following properties:
\begin{enumerate}[(a)]
\item
The orders~$p_1,\dots,p_r$ are prime with~$m$ and satisfy the condition
$$(p_1\cdots p_r)\cdot\left(\sum\limits_{i=1}^r\,\frac1{p_i}-(2g-2)-r\right)\equiv0\mod m.$$
\item
If $g>1$ and $m$ is odd then $\de=0$.
\item
If $g=1$ then $\de$ is a divisor of~$\gcd(m,p_1-1,\dots,p_r-1)$.
\item
If $g=0$ then $\de=0$.
\end{enumerate}
\item
Any connected component~$\Smgp(t)$ of the space of all hyperbolic GQHSS of level~$m$ and signature~$(g:p_1,\dots,p_r)$ is homeomorphic to
a quotient of the space~$\r^{6g-6+2r}$ by a discrete action of a certain subgroup of the modular group (see section~\ref{components} for details).
\end{enumerate}

\myskip
The paper is organised as follows:
In section~\ref{sec-sing-autfac} we explore the connection between hyperbolic GQHSS,
roots of tangent bundles of orbifolds and lifts of Fuchsian groups into the coverings~$\Gm$ of~$G=\PSL$.
In section~\ref{sec-level-functions} we study algebraic properties of the covering groups~$\Gm$.
We describe a level function induced by a decomposition of the covering~$\Gm$ into sheets
and choosing a numeration of the sheets and study properties of these functions.
In section~\ref{sec-levels-on-lifts} we study lifts of Fuchsian groups into~$\Gm$.
In section~\ref{sec-m-arf} we define $m$-Arf functions.
We prove that there is a 1-1-correspondence between the set of $m$-Arf functions
and the set of functions associated to the lifts of Fuchsian groups into~$\Gm$ via the numeration of the covering sheets.
Hence these two sets are also in 1-1-correspondence with the set of all hyperbolic GQHSS of level~$m$ for a fixed orbifold.
Moreover we show in this section using the explicit description of the generalised Dehn generators of the group of homotopy classes of surface autohomeomorphisms
that the set of all $m$-Arf functions on an orbifold has a structure of an affine space.
In the last section we find topological invariants of $m$-Arf functions and prove that they describe the connected components of the moduli space.
Furthermore we show using a version of Theorem of Fricke and Klein~\cite{N1978moduli}, \cite{Zieschang:book}
that any connected component is homeomorphic to a quotient of~$\r^d$ by a discrete group action.


\myskip
Part of this work was done during the stays at Max-Planck-Institute in Bonn and at IHES.
We are grateful to the both institutions for their hospitality and support.
We would like to thank E.B.~Vinberg and V.~Turaev for useful discussions related to this work.

\section{Gorenstein singularities and lifts of Fuchsian groups}

\label{sec-sing-autfac}

\subsection{Singularities and automorphy factors}

\label{subsec-aut-fac}

\myskip
In this section we recall the results of  Dolgachev, Milnor, Neumann and Pinkham
\cite{Dolgachev:1975, Dolgachev:1977, Milnor:1975, Neumann:1977, Pinkham:1977}
on the graded affine coordinate rings, which correspond to quasi-ho\-mo\-ge\-neous surface singularities.

\begin{mydef}
{\it A (negative unramified) automorphy factor\/}~$\autfac$ is a complex line bundle $\bdl$ over a simply connected Riemann surface $\pln$
together with a discrete co-compact subgroup $\lat\subset\Aut(\pln)$ acting compatibly on $\pln$ and on the line bundle~$\bdl$,
such that the following two conditions are satisfied:
\begin{enumerate}[1)]
\item
The action of $\lat$ is free on $\bdl^*$, the complement of the zero-section in $\bdl$.
\item
Let $\subgrp\triangleleft\lat$ be a normal subgroup of finite index, which acts freely on $\pln$,
and let $\qtbdl\to\srf$ be the complex line bundle $\qtbdl=\bdl/\subgrp$ over the compact Riemann surface $\srf=\pln/\subgrp$.
Then $\qtbdl$ is a negative line bundle, i.e.\ the self-intersection number~$E\cdot E$ is negative.
\end{enumerate}

A simply connected Riemann surface $\pln$ can be $\cpxprj$, $\cpxpln$, or $\hyppln$, the real hyperbolic plane.
We call the corresponding automorphy factor and the corresponding singularity {\it spherical\/}, {\it Euclidean\/}, resp.\ {\it hyperbolic\/}.
\end{mydef}

\begin{rem}
There always exists a normal freely acting subgroup of $\lat$ of finite index~\cite{Dolgachev:1983}.
In the hyperbolic case the existence follows from the theorem of Fox-Bundgaard-Nielsen.
If the second assumption in the last definition holds for some normal freely acting subgroup of finite index, then it holds for any such subgroup, see~\cite{Dolgachev:1983}.
\end{rem}

The simplest examples of such complex line bundles with group actions
are the cotangent bundle of the complex projective line $\pln=\cpxprj$ and the tangent bundle of the hyperbolic plane $\pln=\hyppln$
equipped with the canonical action of a subgroup $\lat\subset\Aut(\pln)$.

\smallskip
Let $\autfac$ be a negative unramified automorphy factor.
Since the bundle $\qtbdl=\bdl/\subgrp$ is negative,
one can contract the zero section of $\qtbdl$ to get a complex surface with one isolated singularity corresponding to the zero section.
There is a canonical action of the group $\grpsubgrp$ on this surface.
The quotient is a complex surface $X\autfac$ with an isolated singular point $o\autfac$, which depends only on the automorphy factor $\autfac$.

\smallskip
The following theorem summarizes the results of Dolgachev, Milnor, Neumann, and Pinkham:

\begin{thm}
The surface $X\autfac$ associated to a negative unramified automorphy factor $\autfac$ is a quasi-homogeneous affine algebraic surface with a normal isolated singularity.
Its affine coordinate ring  is the graded $\c$-algebra of generalised $\lat$-invariant automorphic forms
$$A=\bigoplus\limits_{m\ge0} H^0(\pln,\bdl^{-m})^{\lat}.$$
All normal isolated quasi-homogeneous surface singularities $(X,x)$ are obtained in this way,
and the automorphy factors with $(X\autfac,o\autfac)$ isomorphic to $(X,x)$ are uniquely determined by $(X,x)$ up to isomorphism.
\end{thm}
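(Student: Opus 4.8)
The plan is to reduce the construction to the classical cone (Grauert contraction) over the compact quotient and then to descend through the finite group $\grpsubgrp$. First I would fix a normal subgroup $\subgrp\triangleleft\lat$ of finite index acting freely on~$\pln$, whose existence is guaranteed by the theorem of Fox--Bundgaard--Nielsen quoted above, and set $\srf=\pln/\subgrp$ and $\qtbdl=\bdl/\subgrp$. Since $\subgrp$ acts freely, every $\subgrp$-invariant section of $\bdl^{-m}$ descends to a section of $\qtbdl^{-m}$ over the compact Riemann surface~$\srf$, so that $H^0(\pln,\bdl^{-m})^{\subgrp}=H^0(\srf,\qtbdl^{-m})$; taking the residual invariance under $\grpsubgrp$ then identifies the candidate coordinate ring as
$$A=\bigoplus_{m\ge0}H^0(\pln,\bdl^{-m})^{\lat}=\Bigl(\bigoplus_{m\ge0}H^0(\srf,\qtbdl^{-m})\Bigr)^{\grpsubgrp}=B^{\grpsubgrp},$$
where $B=\bigoplus_{m\ge0}H^0(\srf,\qtbdl^{-m})$.

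Next I would establish the required properties one ring at a time. Because $\qtbdl$ is negative over the projective curve~$\srf$, its dual $\qtbdl^{-1}$ has positive degree and is therefore ample, so $B$ is a finitely generated normal graded $\c$-algebra with $\operatorname{Proj}B=\srf$, and $\operatorname{Spec}B$ is exactly the affine cone obtained by contracting the zero section in the total space of~$\qtbdl$, hence a normal affine surface that is smooth away from its vertex. Fibrewise scaling gives $\operatorname{Spec}B$ a $\c^*$-action whose weight-$m$ eigenspace is $H^0(\srf,\qtbdl^{-m})$, i.e.\ the action that encodes the grading. Passing to invariants, $A=B^{\grpsubgrp}$ is again a finitely generated normal $\c$-algebra --- finite generation by Noether's theorem on invariants of a finite group, normality because the invariant ring of a normal ring under a finite group is normal --- and $\operatorname{Spec}A=(\operatorname{Spec}B)/\grpsubgrp$ inherits the $\c^*$-action. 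The hypothesis that $\lat$ acts freely on $\bdl^*$ means $\grpsubgrp$ acts freely away from the vertex, so $\operatorname{Spec}A$ is smooth away from a single point, which I would identify with $o\autfac$. This exhibits $X\autfac=\operatorname{Spec}A$ as a quasi-homogeneous affine algebraic surface with a normal isolated singularity and coordinate ring~$A$, as claimed.

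The converse --- that every normal isolated quasi-homogeneous surface singularity $(X,x)$ arises from such an automorphy factor, uniquely up to isomorphism --- is the deeper half and the main obstacle. Here I would begin from the good $\c^*$-action that defines quasi-homogeneity, grade the coordinate ring as $A=\bigoplus_{m\ge0}A_m$ with $x$ the unique fixed point, and form $\operatorname{Proj}A$. The link of the singularity is then a Seifert fibred $3$-manifold over an orbifold base, and its Seifert invariants reconstruct a Fuchsian group~$\lat$ with $\pln/\lat$ equal to that base orbifold, together with an orbifold line bundle whose fractional degree records the weights of the $\c^*$-action; lifting this data to the universal cover recovers $\bdl$ and the compatible $\lat$-action, and negativity of the bundle follows from the requirement that $x$ be a genuine contractible singular point. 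Uniqueness holds because each step is canonical: the $\c^*$-action, its grading, and hence the Seifert and uniformisation data are all intrinsic to $(X,x)$. The technical heart, and where essentially all the work lies, is the dictionary between graded rings, Seifert fibrations and Fuchsian uniformisation assembled from the cited work of Dolgachev, Milnor, Neumann and Pinkham.
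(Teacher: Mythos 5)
You should know at the outset that the paper does not prove this theorem at all: it is stated as a summary of the results of Dolgachev, Milnor, Neumann and Pinkham \cite{Dolgachev:1975, Dolgachev:1977, Milnor:1975, Neumann:1977, Pinkham:1977}, with no argument given, so there is no internal proof to compare yours against; the relevant question is whether your sketch faithfully reconstructs the argument of that literature. For the forward direction it does, and correctly: the descent identification $A=B^{\grpsubgrp}$ with $B=\bigoplus_{m\ge0}H^0(\srf,\qtbdl^{-m})$ along the free action of $\subgrp$, the realisation of $\operatorname{Spec}B$ as the cone obtained by contracting the zero section of $\qtbdl$ (negativity of $\qtbdl$, i.e.\ ampleness of $\qtbdl^{-1}$, is precisely what makes $B$ finitely generated and the contraction algebraic), finite generation and normality of the invariant ring $B^{\grpsubgrp}$, and the use of condition 1) of the definition (freeness of the $\lat$-action on $\bdl^*$) to confine the singularity to the image of the vertex --- this is the standard construction. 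Writing $A$ as $B^{\grpsubgrp}$ also shows at once that $X\autfac$ is independent of the choice of $\subgrp$, a point the paper itself only handles by reference.

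The converse and the uniqueness statement are where your proposal stops being a proof and becomes a pointer to the same references the paper cites; that is defensible here, but one assertion deserves flagging as a genuine soft spot. Uniqueness does not follow because ``each step is canonical'': the grading of $A$, equivalently the good $\c^*$-action on $(X,x)$, is an input to your reconstruction, and to conclude that the automorphy factor is determined by the abstract germ $(X,x)$ you must know that this action is itself unique up to conjugation by automorphisms of the germ. That is a nontrivial theorem about normal surface singularities with good $\c^*$-action, established in the cited literature, not a formality; without it, two different gradings of the same ring could a priori yield non-isomorphic automorphy factors. With that caveat made explicit, your outline is a correct account of the proof that the paper delegates entirely to its references.
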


\noindent
We now recall the definition of Gorenstein singularities and the characterization of the corresponding automorphy factors.

\myskip
A normal isolated singularity of dimension~$n$ is Gorenstein
if and only if there is a nowhere vanishing $n$-form on a punctured neighbourhood of the singular point.
For example all isolated singularities of complete intersections are Gorenstein.

\smallskip
In~\cite{Dolgachev:1983} Dolgachev proved the following theorem obtained independently by W.~Neumann
(see also~\cite{Dolgachev:1983:LNMS}).

\begin{thm}
\label{charact-gor}
A quasi-homogeneous surface singularity is Go\-ren\-stein
if and only if for the corresponding automorphy factor $\autfac$
there is an integer~$m$ (called the {\it level\/} or the {\it exponent\/} of the automorphy factor)
such that the $m$-th tensor power $\bdl^m$ is $\lat$-equvariantly
isomorphic to the tangent bundle $\tngpln$ of the surface $\pln$.
\end{thm}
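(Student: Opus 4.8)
The plan is to translate the Gorenstein condition, a nowhere vanishing holomorphic $2$-form on a punctured neighbourhood of the singular point, into an explicit condition on the automorphy factor, and then to match that condition with $\bdl^m\cong\tngpln$. First I would record the geometric identification underlying the whole argument: after contracting the zero section of $\qtbdl=\bdl/\subgrp$ one has $X\autfac\setminus\{o\autfac\}\cong\bdl^*/\lat$, and a punctured neighbourhood of the singular point corresponds to a deleted tubular neighbourhood $\{0<\|v\|<\varepsilon\}$ of the zero section in the total space of $\bdl$, with $o\autfac$ represented by the zero section. Hence a nowhere vanishing holomorphic $2$-form on a punctured neighbourhood is the same datum as a $\lat$-invariant nowhere vanishing holomorphic $2$-form on $\{0<\|v\|<\varepsilon\}$. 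I would then fix local data: a holomorphic frame $e$ of $\bdl$ over a chart with base coordinate $z$, and the fibre coordinate $w$ defined by $v=w\,e(z)$, so that $(z,w)$ are local coordinates on the total space, the zero section is $\{w=0\}$, and every holomorphic $2$-form is $f(z,w)\,dz\wedge dw$. The scaling action $w\mapsto\zeta w$ commutes with the $\lat$-action $\gamma\colon(z,w)\mapsto(\gamma z,a_\gamma(z)\,w)$, where $a_\gamma$ is the automorphy factor of $\bdl$; recalling that $\tngpln$ has automorphy factor $\gamma'(z)$, the isomorphism $\bdl^m\cong\tngpln$ is precisely the cocycle identity $a_\gamma(z)^m=\gamma'(z)$.

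For the implication $\bdl^m\cong\tngpln\Rightarrow{}$Gorenstein I would exhibit the form explicitly. A direct pullback computation gives $\gamma^*\!\big(w^{k}\,dz\wedge dw\big)=a_\gamma^{\,k+1}\gamma'\,w^{k}\,dz\wedge dw$, so the weight-$(k{+}1)$ form $w^{k}\,dz\wedge dw$ is $\lat$-invariant exactly when $a_\gamma^{\,k+1}\gamma'=1$; under $a_\gamma^m=\gamma'$ this singles out $k=-(m+1)$. I would then check, by the analogous transition-function computation across charts, that $\omega=w^{-(m+1)}\,dz\wedge dw$ is globally well defined on the total space of $\bdl$ precisely because $\bdl^m\cong\tngpln$ (the transition factor is $g_{\alpha\beta}^{-m}\,\partial z_\alpha/\partial z_\beta=1$). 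This $\omega$ is holomorphic and nowhere vanishing on $\bdl^*$, is $\lat$-invariant, and therefore descends to a nowhere vanishing holomorphic $2$-form on $\bdl^*/\lat=X\autfac\setminus\{o\autfac\}$, witnessing the Gorenstein property. It has a pole of order $m+1$ along the zero section, which is harmless since that locus is exactly the contracted point.

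For the converse I would start from a nowhere vanishing holomorphic $2$-form on a punctured neighbourhood, pull it back to a $\lat$-invariant nowhere vanishing holomorphic $2$-form $\tilde\omega$ on $\{0<\|v\|<\varepsilon\}$, and Laurent expand it in the fibre variable, $\tilde\omega=\big(\sum_k f_k(z)\,w^{k}\big)\,dz\wedge dw$. Since the scaling action commutes with $\lat$, each homogeneous component is separately $\lat$-invariant, which by the pullback formula forces $f_k$ to be a $\lat$-automorphic section of the line bundle with automorphy factor $(a_\gamma^{\,k+1}\gamma')^{-1}$, that is, an invariant section of $\tngpln\otimes\bdl^{-(k+1)}$ over the compact quotient. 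The crux is to reduce $\tilde\omega$ to a single homogeneous summand: invoking quasi-homogeneity, the dualizing module of the graded Gorenstein ring $A$ is free of rank one on a single homogeneous generator, so the form may be taken homogeneous of some weight $-(m+1)$; its being nowhere vanishing then says that the invariant section $f_{-(m+1)}$ of $\tngpln\otimes\bdl^{-m}$ is nowhere zero, i.e.\ that this bundle is $\lat$-equivariantly trivial, which is exactly $\bdl^m\cong\tngpln$.

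I expect the converse to be the main obstacle, for two reasons. First, passing from an arbitrary nowhere vanishing $2$-form to a homogeneous one is where the quasi-homogeneous (graded Gorenstein) structure must genuinely be used: one must argue that a Laurent series with many nonzero $f_k$ can, thanks to the scaling action and the one-generator structure of the canonical module, be replaced by a single nowhere vanishing homogeneous eigenform, and that the surviving weight gives the same integer $m$ for all $\gamma$. Second, at the elliptic fixed points of $\lat$ the quotient is an orbifold and $\bdl$ descends only to a singular line bundle; here I would use the hypothesis that the action is free on $\bdl^*$ to control the local monodromy $a_\gamma$ at a fixed point of order $p$, ensuring that $a_\gamma^m=\gamma'$ can hold with an integral $m$ and that $\tilde\omega$ is forced to have no extra zeros or poles along the exceptional orbits. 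This is precisely the local condition that later reappears as $p\cdot\arf(c)+1\equiv0\bmod m$ for elliptic elements.
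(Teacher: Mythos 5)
First, a point of comparison that matters here: the paper does not prove Theorem~\ref{charact-gor} at all --- it states it as a known result of Dolgachev, obtained independently by W.~Neumann, with a citation to Dolgachev's 1983 paper. So there is no in-paper argument to measure yours against, and your proposal has to be judged as a reconstruction of the cited proof. On those terms it is essentially sound and close in spirit to Dolgachev's original argument. The forward direction is complete as you give it: since $\pln$ is simply connected, an equivariant isomorphism $\bdl^m\cong\tngpln$ (which a priori only says the cocycles $a_\ga^m$ and $\ga'$ differ by a coboundary) can be normalised to the on-the-nose identity $a_\ga^m=\ga'$ by extracting an $m$-th root of the coboundary, and then $w^{-(m+1)}\,dz\wedge dw$ is $\lat$-invariant, holomorphic and nowhere zero on $\bdl^*$, and descends because $\lat$ acts freely there. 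In the converse there are two bookkeeping slips: the coefficient $f_k$ of $w^k\,dz\wedge dw$ is an invariant section of $K_{\pln}\otimes\bdl^{-(k+1)}$ (canonical bundle, not tangent bundle), which for $k=-(m+1)$ is $K_{\pln}\otimes\bdl^{m}$; fortunately a nowhere vanishing invariant section of that bundle still says exactly $\bdl^m\cong\tngpln$, so the conclusion survives the slip. Your worry about elliptic fixed points in the last paragraph is, by contrast, a non-issue: all bundles live upstairs on $\pln$ where they are honest line bundles, freeness of the $\lat$-action on $\bdl^*$ is precisely what makes your covering identification of the punctured neighbourhood work, and the congruence $p\cdot\arf(c)+1\equiv 0$ modulo $m$ is a consequence of $a_\ga^m=\ga'$ at a fixed point, not an extra condition to verify.

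The one step that genuinely needs more than you supply --- and you flag it yourself --- is the reduction of an arbitrary nowhere vanishing $2$-form to a homogeneous one. As written, you invoke ``the dualizing module of the graded Gorenstein ring $A$ is free of rank one on a single homogeneous generator,'' but the paper's definition of Gorenstein is analytic (a nowhere vanishing form on a punctured analytic neighbourhood), so two intermediate facts are being used silently: (i) for a normal surface singularity, existence of such a form is equivalent to freeness of the canonical module, and (ii) for a quasi-homogeneous singularity this freeness is inherited by the \emph{graded} canonical module, where graded Nakayama (the ring is positively graded with $A_0=\c$) then yields a homogeneous generator, necessarily nowhere vanishing on the punctured spectrum since it generates every stalk of the canonical sheaf there. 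Both are standard (Goto--Watanabe theory of graded canonical modules; this is essentially how Dolgachev argues), but they are the actual content of the converse, so they should be cited or proved rather than folded into one sentence. Your secondary worry there --- that different $\ga$ might yield different $m$ --- is vacuous once this is done: the weight of the single surviving Laurent component is one integer, and $m=-(k+1)$ is defined by it uniformly.
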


Let $\autfac$ be an automorphy factor of level~$m$, which corresponds to a Gorenstein singularity.
The isomorphism $\bdl^m\cong\tngpln$ induces an isomorphism $\qtbdl^m\cong\tngsrf$.
A simple computation with Chern numbers shows that the possible values of the exponent are $m=-1$ or $m=-2$ for $\pln=\cpxprj$,
whereas $m=0$ for $\pln=\cpxpln$ and $m$ is a positive integer for $\pln=\hyppln$.

\subsection{Hyperbolic automorphy factors and lifts of Fuchsian groups}


We consider the universal cover $\tG=\tPSL$ of the Lie group
$$G=\PSL=\SL/\{\pm1\},$$
the group of orien\-ta\-tion-preserving isometries of the hyperbolic plane.
Here our model of the hyperbolic plane is the upper half-plane $\hyp=\{z\in\c\st\Im(z)>0\}$
and the action of an element $[\abcd]\in\PSL$ on~$\hyp$ is by
$$z\mapsto\frac{az+b}{cz+d}.$$
We denote by $[A]=[\abcd]\in\PSL$ the equivalence class of a matrix $A=\abcd\in\SL$.

\myskip
As topological space $G=\PSL$ is homeomorphic to the open solid torus $\s^1\times\c$.
The fundamental group of the open solid torus $G$ is infinite cyclic.
Therefore, for each natural number $m$ there is a unique connected $m$-fold covering
$$\Gm=\tG/(m\cdot Z(\tG))$$
of $G$, where $\tG$ is the universal covering of~$G$ and $Z(\tG)$ is the centre of~$\tG$.
For $m=2$ this is the group $G_2=\SL$.

\myskip
Here is another description of the covering groups $\Gm$ of $G$ which fixes a group structure.
Let $\Hol(\hyp,\c^*)$ be the set of all holomorphic functions $\hyp\to\c^*$.

\begin{prop}
\label{fract-autforms}
The $m$-fold covering group $\Gm$ of~$G$ can be described as
$$\{(g,\de)\in G\times\Hol(\hyp,\c^*)\st \de^m(z)=g'(z)~{\rm for~all}~z\in\hyp\}$$
with multiplication $(g_2,\de_2)\cdot(g_1,\de_1)=(g_2\cdot g_1,(\de_2\circ g_1)\cdot\de_1)$.
\end{prop}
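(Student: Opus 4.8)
The plan is to check directly that the right-hand set—call it $\mathcal G$—is a topological group, that the projection to its first factor is a covering homomorphism onto $G$ with kernel $\z/m\z$, and that $\mathcal G$ is connected; the uniqueness of the connected $m$-fold cover of $G$ then identifies $\mathcal G$ with $\Gm$. First I would record that for $g=[\abcd]\in G$ the derivative is $g'(z)=(cz+d)^{-2}$, which is well defined on $\PSL$ (the sign of the matrix is squared away) and nowhere vanishing on $\hyp$ (since $cz+d\ne0$ when $\Im z>0$), so the defining condition $\de^m=g'$ makes sense. The chain rule $(g_2\circ g_1)'=(g_2'\circ g_1)\cdot g_1'$ shows that the prescribed product again satisfies $\de^m=g'$ and, applied repeatedly, yields associativity; the identity is $(\id,1)$ and $(g,\de)^{-1}=(g^{-1},(\de\circ g^{-1})^{-1})$. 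Continuity of the operations in the compact-open topology on $\Hol(\hyp,\c^*)$ is routine, so $\mathcal G$ is a topological group.

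Next I would analyse $p\colon\mathcal G\to G$, $(g,\de)\mapsto g$. It is a surjective homomorphism, because $g'$ is a nowhere-vanishing holomorphic function on the simply connected domain $\hyp$ and hence has exactly $m$ holomorphic $m$-th roots, any two differing by a constant $m$-th root of unity. Consequently $\ker p$ consists of the pairs $(\id,\zeta)$ with $\zeta$ a constant $m$-th root of unity, so $\ker p\cong\z/m\z$, and it is central since constants commute with composition. The evaluation $\de\mapsto\de(i)$ identifies each fibre $p^{-1}(g)$ with the set of $m$-th roots of the number $g'(i)\in\c^*$; as $g\mapsto g'(i)$ is continuous into $\c^*$, over any simply connected neighbourhood $V\subset G$ one can select these roots continuously, producing $m$ disjoint sections and a local trivialization $p^{-1}(V)\cong V\times(\z/m\z)$. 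Thus $p$ is a covering map.

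The crux is the connectivity of $\mathcal G$, which I would settle by a monodromy computation along a generator of $\pi_1(G)\cong\z$. Taking the one-parameter group $g_\theta$ of rotations of $\hyp$ fixing $i$ (which generates $\pi_1(G)$ as the angle $\theta$ runs over $[0,\pi]$, since $\theta=\pi$ returns to the identity of $\PSL$), one computes $g_\theta'(i)=e^{2i\theta}$; lifting this loop from $(\id,1)$ and following the chosen root through $\de_\theta(i)=e^{2i\theta/m}$ ends at the constant pair $(\id,e^{2\pi i/m})$, a generator of $\ker p$. Hence the whole kernel lies in the identity component of $\mathcal G$, and since that component surjects onto the connected group $G$ and contains $\ker p$, it exhausts $\mathcal G$.

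Therefore $\mathcal G$ is a connected $m$-fold covering group of $G$, and as $\pi_1(G)\cong\z$ has a unique index-$m$ subgroup, such a covering is unique up to isomorphism, namely $\Gm=\tG/(m\cdot Z(\tG))$. To pin the identification down as topological groups rather than merely as coverings, I would use the companion model $\widetilde{\mathcal G}=\{(g,\phi)\in G\times\Hol(\hyp,\c)\st\exp\phi=g'\}$ with the additive chain-rule law $(g_2,\phi_2)(g_1,\phi_1)=(g_2g_1,(\phi_2\circ g_1)+\phi_1)$, which the same monodromy argument shows to be the universal cover $\tG$; the map $(g,\phi)\mapsto(g,\exp(\phi/m))$ is then a surjective homomorphism $\widetilde{\mathcal G}\to\mathcal G$ with kernel exactly $m\cdot Z(\tG)$, realizing $\mathcal G$ as $\tG/(m\cdot Z(\tG))=\Gm$ on the nose. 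The main obstacle is the connectivity step: the group law is forced by the chain rule, but one must show the natural $\z/m\z$-bundle $p$ has full monodromy rather than splitting into $m$ copies of $G$, and it is precisely the computation $g_\theta'(i)=e^{2i\theta}$ along the rotation circle that supplies this.
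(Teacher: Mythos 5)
Your proposal is correct and follows essentially the same route as the paper's (much terser) proof: verify the group law via the chain rule, show the first-factor projection is an $m$-fold covering, establish connectivity, and invoke uniqueness of the connected $m$-fold cover of $G$. The paper leaves these verifications as ``one can check''; your monodromy computation $g_\theta'(i)=e^{2i\theta}$ and the explicit comparison with the $\exp$-model of $\tG$ supply exactly the details it omits, including the group-level identification with $\tG/(m\cdot Z(\tG))$.
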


\begin{proof}
Let $X$ be the subspace of $G\times\Hol(\hyp,\c^*)$ in question.
One can check that the space $X$ is connected
and that the map $X\to G$ given by $(\ga,\de)\mapsto\ga$ is an $m$-fold covering of~$G$.
Hence the coverings $X\to G$ and $\Gm\to G$ are isomorphic.
One can check that the operation described above defines a group structure on~$X$
and that the covering map $X\to G$ is a homomorphism with respect to this group structure.
\end{proof}

\begin{rem}
This description of $\Gm$ is inspired by the notion of automorphic
differential forms of fractional degree, introduced by J.~Milnor in~\cite{Milnor:1975}.
For a more detailed discussion of this fact see~\cite{Lion:Vergne}, section~1.8.
\end{rem}

\myskip
A Fuchsian group $\lat\subset\PSL$ acts on the hyperbolic plane $\hyppln$.
{\it A hyperbolic Gorenstein automorphy factor of level~$m$\/} (associated to the Fuchsian group $\lat$)
is an action of a Fuchsian group $\lat$ on the trivial complex line bundle $\hyppln\times\c$ over the hyperbolic plane $\hyppln$ given by
$$g\cdot(z,t)=(g(z),\de_g(z)\cdot t),$$
where $\de_g:\hyppln\to\c^*$ is a holomorphic function,
for any~$g\in\lat$ we have $\de_g^m=g'$
and for any~$g_1,g_2\in\lat$ we have $\de_{g_2\cdot g_1}=(\de_{g_2}\circ g_1)\cdot\de_{g_1}$.

\begin{mydef}
A {\it lift\/} of the Fuchsian group $\lat$ into $\Gm$
is a subgroup $\lats$ of $\Gm$ such that the restriction of the covering map $\Gm\to G$ to $\lats$
is an isomorphism between $\lats$ and $\lat$.
\end{mydef}

\myskip
Using the description of the $m$-fold covering group $\Gm$ of~$G$ in Proposition~\ref{fract-autforms}
we obtain the following result:

\begin{prop}
\label{autfac-lifts}
There is a 1-1-correspondence between the lifts of $\lat$ into $\Gm$
and hyperbolic Gorenstein automorphy factors of level~$m$ associated to the Fuchsian group $\lat$.
\end{prop}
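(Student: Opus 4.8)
The plan is to set up explicit maps in both directions between the set of lifts of $\lat$ into $\Gm$ and the set of hyperbolic Gorenstein automorphy factors of level~$m$ associated to $\lat$, using the concrete description of $\Gm$ from Proposition~\ref{fract-autforms}. Recall that a Gorenstein automorphy factor of level~$m$ associated to $\lat$ is, by Theorem~\ref{charact-gor}, a line bundle $\bdl$ over $\pln=\hyp$ with a compatible $\lat$-action together with an isomorphism $\bdl^m\cong\tngpln$. Since $\hyp$ is simply connected, the bundle $\bdl$ is holomorphically trivial, so the data reduces to how $\lat$ acts on the fibres. The element $(g,\de)\in\Gm$ with $\de^m(z)=g'(z)$ is precisely the bookkeeping of such a fibrewise action: $g$ records the action on the base $\hyp$ and $\de$ records an $m$-th root of the derivative cocycle $g'$, which is exactly an action on an $m$-th root of the tangent bundle.

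First I would make the correspondence precise in the forward direction. Given a lift $\lats\subset\Gm$, the restriction of the covering map $\Gm\to G$ to $\lats$ is an isomorphism onto $\lat$, so each $\ga\in\lat$ determines a unique element $(\ga,\de_\ga)\in\lats$. The assignment $\ga\mapsto\de_\ga$ is a cocycle over $\lat$ with the multiplication rule of Proposition~\ref{fract-autforms}, namely $\de_{\ga_2\ga_1}=(\de_{\ga_2}\circ\ga_1)\cdot\de_{\ga_1}$, and the constraint $\de_\ga^m=\ga'$ holds for every $\ga$. Using $\de_\ga$ as the factor of automorphy defining the $\lat$-action on the trivial line bundle $\bdl=\hyp\times\c$, I would check that the induced action is compatible and that the $m$-th power cocycle is the derivative cocycle $\ga'$, which gives precisely the isomorphism $\bdl^m\cong\tngpln$ required for level~$m$; the negativity and freeness conditions come from the Fuchsian nature of $\lat$.

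In the reverse direction, given a Gorenstein automorphy factor of level~$m$, I would trivialise $\bdl$ over $\hyp$ and read off the $\lat$-action as a factor of automorphy $\de_\ga:\hyp\to\c^*$; the level~$m$ isomorphism $\bdl^m\cong\tngpln$ forces $\de_\ga^m=\ga'$ after adjusting the trivialisation, so that each pair $(\ga,\de_\ga)$ lies in $\Gm$. The cocycle condition for a group action translates into the multiplication law of $\Gm$, so $\{(\ga,\de_\ga)\st\ga\in\lat\}$ is a subgroup mapping isomorphically to $\lat$, i.e.\ a lift. I would then verify that these two constructions are mutually inverse and that each respects the appropriate notion of isomorphism on both sides.

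The main obstacle I anticipate is not the formal cocycle algebra but the rigidity needed to make the correspondence well-defined up to isomorphism: different trivialisations of $\bdl$ and different choices of the isomorphism $\bdl^m\cong\tngpln$ could a priori change $\de_\ga$ by a coboundary or by an $m$-th root of unity, and I must confirm that such changes correspond exactly to conjugating the lift inside $\Gm$ (equivalently, are absorbed by isomorphism of automorphy factors). Checking that the ambiguity in extracting an $m$-th root of $\ga'$ is controlled precisely by the $m$-fold covering structure of $\Gm\to G$ — rather than introducing extra freedom — is the delicate point, and it is exactly here that the identification $Z(\tG)\cong\z$ and the choice of the $m$-fold quotient in the definition of $\Gm$ do the essential work.
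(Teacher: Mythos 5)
Your proposal is correct and takes essentially the same route as the paper: describe an automorphy factor of level $m$, via Theorem~\ref{charact-gor}, as a cocycle action $g\cdot(z,t)=(g(z),\de_g(z)\,t)$ on the trivial bundle $\hyp\times\c$ with $\de_g^m=g'$, and observe that by Proposition~\ref{fract-autforms} such a cocycle is precisely a lift of $\lat$ into $\Gm$; the paper's proof is exactly this comparison, stated very tersely.

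One clarification on the point you flag as delicate: the ambiguity you worry about is not absorbed by conjugation inside $\Gm$ --- it vanishes outright. If two trivialisations of $\bdl$ both achieve the exact normalisation $\de_g^m=g'$, they differ by some $f\in\Hol(\hyp,\c^*)$, and the two cocycles are related by $\de_g\mapsto\de_g\cdot f/(f\circ g)$; the normalisation forces $f^m$ to be $\lat$-invariant, and since $\hyp/\lat$ is compact a $\lat$-invariant holomorphic function is constant, so $f$ is constant and the coboundary is trivial. Likewise the choice of branch of the $m$-th root used to adjust the trivialisation changes $f$ only by a constant root of unity, which again cancels. This rigidity is what you need: the proposition asserts a bijection with lifts themselves, not with conjugacy classes of lifts, and conjugating by a general element of $\Gm$ would in any case replace $\lat$ by a conjugate Fuchsian group rather than produce another lift of the same $\lat$.
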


\section{Level functions on covering groups of ${\rm PSL}(2,\mathbb R)$}

\label{sec-level-functions}


\subsection{Classification of elements in the covering groups of $G={\rm PSL}(2,\mathbb R)$}


Elements of~$G$ can be classified with respect to the fixed point behavior of their action on~$\hyp$.
An element is called {\it hyperbolic\/} if it has two fixed points,
which lie on the boundary $\dd\hyp=\r\cup\{\infty\}$ of~$\hyp$.
One of the fixed points of a hyperbolic element is attracting, the other fixed point is repelling.
The {\it axis\/}~$\ell(g)$ of a hyperbolic element $g$ is the geodesic between the fixed points of~$g$,
oriented from the repelling fixed point to the attracting fixed point.
The element $g$ preserves the geodesic $\ell(g)$.
We call a hyperbolic element with attracting fixed point~$\al$ and repelling fixed point~$\be$ {\it positive\/}
if~$\al<\be$.
The {\it shift parameter\/} of a hyperbolic element~$g$ is the minimal displacement~$\inf_{x\in\hyp}\,d(x,g(x))$.
An element is called {\it parabolic\/} if it has one fixed point, which is on the boundary~$\dd\hyp$.
We call a parabolic element $g$ with fixed point~$\al$ {\it positive\/} if $g(x)>x$ for all~$x\in\r\backslash\{\al\}$.
An element that is neither hyperbolic nor parabolic is called {\it elliptic\/}.
It has one fixed point that is in $\hyp$.
Given a base-point $x\in\hyp$ and a real number $\varphi$,
let $\rho_x(\varphi)\in G$ denote the rotation through angle $\varphi$ counter-clockwise about the point $x$.
Any elliptic element is of the form $\rho_x(\varphi)$, where $x$ is the fixed point.
Thus we obtain a $2\pi$-periodic homomorphism $\rho_x:\r\to G$ (with respect to the additive structure on $\r$).
Elements of $\tG$ resp.~$\Gm$ can be classified with respect to the fixed point behavior of action on $\hyp$ of their image in~$G$.
We say that an element of~$\tG$ resp.~$\Gm$ is {\it hyperbolic\/}, {\it parabolic\/} resp.\ {\it elliptic\/} if its image in~$G$ has this property.







\subsection{Central elements in covering groups of $G={\rm PSL}(2,\mathbb R)$}

\myskip
The homomorphism $\rho_x:\r\to G$ lifts to a unique homomorphism $r_x:\r\to\tG$ into the universal covering group.
Since $\rho_x(2\pi\ell)=\id$ for~$\ell\in\z$,
it follows that the lifted element $r_x(2\pi\ell)$ belongs to the centre $Z(\tG)$ of~$\tG$.
Note that the element $r_x(2\pi\ell)$ depends continuously on $x$.
But the centre of~$\tG$ is discrete, so this element must remain constant,
thus~$r_x(2\pi\ell)$ does not depend on~$x$.
The centre~$Z(\tG)$ of~$\tG$ is equal to the pre-image of the identity element under the projection~$\tG\to G$, hence
$Z(\tG)=\{r_x(2\pi\ell)\st\ell\in\z\}$.
Let $\centrgen=r_x(2\pi)$ for some (and hence for any)~$x$ in~$\hyp$.
The element~$\centrgen$ is one of the two generators of the centre of~$\tG$
since any other element of the centre is of the form~$r_x(2\pi\ell)=(r_x(2\pi))^{\ell}=\centrgen^{\ell}$.
We would also like to point out that for the lift of an elliptic element $\rho_x(2\pi/p)$ of finite order~$p$
we have $(r_x(2\pi/p))^p=r_x(2\pi)=\centrgen$.

\subsection{Definition of a level function}


\myskip
Let $\Delta$ be the set of all elliptic elements of order~$2$ in~$G$.
Let $\Xi$ be the complement in~$G$ of the set~$\De$.
The space~$G$ is homeomorphic to the open solid torus~$\s^1\times\c$.
In~\cite{JN} Jankins and Neumann give an explicit homeomorphism (see~\cite{JN}, Apendix).
The image of the set~$\De$ under this homeomorphism is~$\{*\}\times\c$.
From this description it follows in particular that the subset~$\Xi$ is simply connected.
The pre-image~$\tilde\Xi$ of the subset~$\Xi$ in~$\tG$ consists of infinitely many connected components.
Each connected component of the subset~$\tilde\Xi$ contains one and only one pre-image of the identity element of~$G$,
\ie one and only one element of the centre of~$\tG$.

\begin{mydef}
If an element of~$\tG$ is contained in the same connected component of the set~$\tilde\Xi$ as the central element~$\centrgen^k$, $k\in\z$,
we say that the element is {\it at the level\/}~$k$ and set the {\it level function\/}~$\lev$ on this element to be equal to~$k$.
For pre-images of elliptic elements of order~$2$ we set $\lev(r_x(\xi))=k$ for $\xi=\pi+2\pi k$.
\end{mydef}

\begin{rem}
For elliptic elements we have $\lev(r_x(\xi))=k\iff\xi\in(-\pi+2\pi k,\pi+2\pi k]$.
\end{rem}

\begin{mydef}
We define the {\it level function\/}~$\levm$ on elements of~$\Gm=\tG/(m\cdot Z(\tG))$ by
$\levm(g~\mod~(m\cdot Z(\tG)))=\lev(g)~\mod~m\quad\text{for}\quad g\in\tG$.
(All equations involving~$\levm$ are to be understood as equations in~$\z/m\z$, \ie equations modulo~$m$.)
\end{mydef}

\begin{mydef}
The canonical lift of an element~$\bar C$ in~$G$ into~$\tG$ is an element~$\tilde C$ in~$\tG$
such that~$\pi(\tilde C)=\bar C$ and~$\lev(\tilde C)=0$.
The canonical lift of an element~$\bar C$ in~$G$ into~$\Gm$ is an element~$\tilde C$ in~$\Gm$
such that~$\pi(\tilde C)=\bar C$ and~$\levm(\tilde C)=0$.
\end{mydef}

\subsection{Properties of the level function}

\label{mult-tG}

In this subsection we study the behavior of the level function~$\levm$ under inversion (Lemma~\ref{lem-inv}),
conjugation (Lemma~\ref{lem-conj}) and multiplication in some special cases (Lemma~\ref{lem-product-of-crossing-hyps}).
We shall obtain further statements about the behavior of the level function under multiplication in Corollary~\ref{cor-product-dont-intersect}.

In this section we shall repeatedly use the following fact:
Connected components of the set~$\tilde\Xi$ are separated from each other by connected components of the set~$\tilde\De$ of all pre-images of (elliptic) elements of order~$2$.
If a path~$\ga$ in~$\tG$ avoids all pre-images of elements of order~$2$, i.e.\ avoids~$\tilde\De$,
then it means that the path~$\ga$ remains in the same component of the set~$\tilde\Xi$ and therefore the level function~$\lev$ is constant along~$\ga$.



\begin{lem}
\label{lem-inv}
The equation $\lev(A^{-1})=-\lev(A)$ is satisfied for any element~$A$ in~$\tG$ with exception of pre-images of elliptic elements of order~$2$.
The equation $\levm(A^{-1})=-\levm(A)$ is satisfied for any element~$A$ in~$\Gm$ with exception of pre-images of elliptic elements of order~$2$.
\end{lem}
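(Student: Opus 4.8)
The plan is to establish the identity for~$\tG$ first and then to transfer it to~$\Gm$ by reducing modulo~$m$. The point to keep in mind throughout is the fact recalled at the beginning of this subsection: the elements of~$\tG$ that are not pre-images of order-$2$ elliptic elements are precisely those of~$\tilde\Xi$, and $\lev$ is constant on each connected component of~$\tilde\Xi$, the component containing the central element~$\centrgen^k$ being the one at level~$k$. So it is enough to show that if $A\in\tilde\Xi$ lies at level~$k$, then $A^{-1}$ lies at level~$-k$.

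First I would check that inversion $A\mapsto A^{-1}$ is compatible with the stratification defining~$\lev$. Since an elliptic element of order~$2$ in~$G$ is its own inverse, the set~$\De$ is invariant under inversion, and therefore so is its complement~$\Xi$; taking pre-images under the covering map~$\tG\to G$ shows that $\tilde\De$ and $\tilde\Xi$ are both invariant under inversion in~$\tG$. I would also record that each central element~$\centrgen^k$ maps to the identity of~$G$, so $\centrgen^k\in\tilde\Xi$, and that $(\centrgen^k)^{-1}=\centrgen^{-k}$ is the central element lying in the level~$-k$ component.

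The main step is then a transport-of-paths argument. Given $A\in\tilde\Xi$ with $\lev(A)=k$, its component of~$\tilde\Xi$ is homeomorphic to the open connected---hence path-connected---set~$\Xi$, so I can join~$\centrgen^k$ to~$A$ by a path~$\ga$ inside this component; in particular $\ga$ avoids~$\tilde\De$. Applying inversion produces the path $t\mapsto\ga(t)^{-1}$ from~$\centrgen^{-k}$ to~$A^{-1}$, which again avoids~$\tilde\De$ by the invariance just noted. By the key fact, $\lev$ is constant along a path that avoids~$\tilde\De$, so $\lev(A^{-1})=\lev(\centrgen^{-k})=-k=-\lev(A)$. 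For~$\Gm$ I would lift $A$ to some $\tilde A\in\tG$ (which is a pre-image of an order-$2$ element exactly when $A$ is) and conclude $\levm(A^{-1})=\lev(\tilde A^{-1})\bmod m=-\lev(\tilde A)\bmod m=-\levm(A)$.

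I expect the only subtle point to be the role of the excluded order-$2$ elements, and it is worth seeing why they must be excluded rather than treating it as a technical nuisance. For a pre-image $r_x(\pi+2\pi k)$ of an order-$2$ element the inverse is $r_x(-\pi-2\pi k)=r_x(\pi+2\pi(-k-1))$, which by the half-open convention $(-\pi+2\pi k,\pi+2\pi k]$ in the definition of~$\lev$ lies at level~$-k-1$, not~$-k$; the identity genuinely fails there by an off-by-one term. Everywhere else inversion carries a path in~$\tilde\Xi$ to a path in~$\tilde\Xi$, so the level is transported rigidly and no such boundary effect arises.
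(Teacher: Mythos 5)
Your proof is correct and follows essentially the same route as the paper's: transport a path joining $\centrgen^k$ to~$A$ inside a component of~$\tilde\Xi$ through the inversion map, and reduce modulo~$m$ for the~$\Gm$ statement. Your added checks---that inversion preserves~$\tilde\De$ and~$\tilde\Xi$, and the explicit off-by-one computation $\lev\bigl(r_x(\pi+2\pi k)^{-1}\bigr)=-k-1$ showing why order-$2$ pre-images must be excluded---are details the paper leaves implicit, but the argument is the same.
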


\begin{proof}
We shall prove the statement about the level function~$\lev$ on~$\tG$, the statement about the level function~$\levm$ on~$\Gm$ follows immediately.
Let $A\in G$ and let $k=\lev(A)$, then $A$ is in the same connected component of~$\tilde\Xi$ as~$u^k$.
Let $\ga$ be the path in~$\tilde\Xi$ that connects~$A$ with~$u^k$.
Let the path~$\de$ be given by~$\de(t)=(\ga(t))^{-1}$.
The path~$\de$ connects~$A^{-1}$ with~$u^{-k}$.
The path~$\ga$ remains in the same component of~$\tilde\Xi$, i.e.\ it avoids pre-images of elliptic elements of order~$2$.
Consequently, the path~$\de$ also avoids pre-images of elements of order~$2$,
i.e.\ it remains in the same component of~$\tilde\Xi$.
Thus the element~$A^{-1}$ is in the same connected component of~$\tilde\Xi$ as~$u^{-k}$,
i.e.\ $\lev(A^{-1})=-k=-\lev(A)$.
\end{proof}

\begin{lem}
\label{lem-conj}
For any elements~$A$ and $B$ in~$\tG$ we have $\lev(BAB^{-1})=\lev(A)$.
For any elements~$A$ and $B$ in~$\Gm$ we have $\levm(BAB^{-1})=\levm(A)$.
\end{lem}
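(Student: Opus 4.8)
The plan is to prove the statement for the level function $\lev$ on $\tG$; the statement for $\levm$ on $\Gm$ then follows immediately by reducing modulo $m$, exactly as in Lemma~\ref{lem-inv} (lift elements of $\Gm$ to $\tG$, conjugate there, and project back). The guiding idea is that conjugation by a fixed element $B\in\tG$ is a homeomorphism of $\tG$ which (i) preserves the set $\tilde\Xi$, because it preserves $\tilde\De$, and (ii) fixes every central element $u^k$. Granting these two facts, the argument runs parallel to the proof of Lemma~\ref{lem-inv}: if $A$ lies in $\tilde\Xi$ and $k=\lev(A)$, I choose a path $\ga$ in $\tilde\Xi$ joining $A$ to $u^k$ and consider the conjugated path $\de(t)=B\ga(t)B^{-1}$. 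Since conjugation preserves $\tilde\Xi$, the path $\de$ remains in $\tilde\Xi$, and it joins $BAB^{-1}$ to $Bu^kB^{-1}=u^k$. Hence $BAB^{-1}$ lies in the same component of $\tilde\Xi$ as $u^k$, so $\lev(BAB^{-1})=k=\lev(A)$.

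To justify (i), I would use that the covering map $\pi\colon\tG\to G$ is a homomorphism, so $\pi(B\ga(t)B^{-1})=\pi(B)\,\pi(\ga(t))\,\pi(B)^{-1}$. The set $\De$ of elliptic elements of order $2$ in $G$ is invariant under conjugation in $G$, since conjugation preserves both the type and the order of an element; thus its complement $\Xi$ is conjugation-invariant as well, and pulling back along $\pi$ shows that $\tilde\Xi$ and $\tilde\De$ are preserved by conjugation in $\tG$. Fact (ii) is immediate: $u^k\in Z(\tG)$ is central and hence fixed by every inner automorphism.

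The one case not covered by this path argument is when $A$ is a pre-image of an elliptic element of order $2$, i.e.\ $A\in\tilde\De$, for then no path in $\tilde\Xi$ joins $A$ to a central element and $\lev(A)$ is defined separately. Here I would argue directly with the rotation homomorphisms. Writing $A=r_x(\xi)$ with $\xi=\pi+2\pi k$ and $b=\pi(B)$, the map $\xi\mapsto Br_x(\xi)B^{-1}$ is a continuous homomorphism $\r\to\tG$ sending $0$ to the identity and lifting $\xi\mapsto b\,\rho_x(\xi)\,b^{-1}=\rho_{b(x)}(\xi)$ (conjugating a rotation about $x$ by the isometry $b$ yields the rotation about $b(x)$ through the same angle). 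By the uniqueness of the homomorphic lift $r_{b(x)}$ of $\rho_{b(x)}$ it must equal $r_{b(x)}$, so $BAB^{-1}=r_{b(x)}(\xi)$. Since $\lev(r_y(\xi))$ depends only on $\xi$, namely $\lev(r_y(\pi+2\pi k))=k$, we conclude $\lev(BAB^{-1})=k=\lev(A)$.

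I expect this elliptic-of-order-$2$ case to be the main obstacle, precisely because the level there is not governed by the connected-component description of $\tilde\Xi$ that drives the rest of the proof, so the path argument does not apply and must be replaced by the uniqueness of the homomorphic lift $r_x$. One can also organise the whole proof uniformly: conjugation by $B$ preserves both $\tilde\Xi$ and $\tilde\De$, and $\lev$ is locally constant on each of these two sets, so for fixed $A$ the map $B\mapsto\lev(BAB^{-1})$ is a continuous $\z$-valued function on the connected group $\tG$, hence constant and equal to its value $\lev(A)$ at $B=\id$; but this phrasing still requires the component structure of $\tilde\De$, so I would present the two cases separately as above.
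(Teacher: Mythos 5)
Your proof is correct, but it follows a genuinely different route from the paper's. The paper fixes $A$ and moves the conjugator: it joins $B$ to the identity by a path $\be$ in the connected group $\tG$ and observes that $\ga(t)=\be(t)A\be(t)^{-1}$ stays in $\tilde\Xi$ (resp.\ in $\tilde\De$) whenever $A$ does, so that $\lev$, being constant on connected components of either set, cannot change along $\ga$ --- this is precisely the uniform phrasing you sketch in your final paragraph and then decline to adopt. You instead fix $B$ and, for $A\in\tilde\Xi$, conjugate a path from $A$ to the central element $u^k$, exploiting that inner automorphisms fix the centre; this is the exact conjugation analogue of the paper's proof of Lemma~\ref{lem-inv}. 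You then dispose of the case $A\in\tilde\De$ by the identity $Br_x(\xi)B^{-1}=r_{b(x)}(\xi)$, derived from uniqueness of the homomorphic lift of $\rho_{b(x)}$. The trade-off is worth noting: the paper's argument is shorter and treats both cases at once, but it silently uses that $\lev$ is constant on each component of $\tilde\De$ (true, since each such component is $\{r_x(\pi+2\pi k)\st x\in\hyp\}$ for fixed $k$, but not spelled out there); your two-case argument never needs the component structure of $\tilde\De$ at all, replacing it with the conjugation-equivariance of the rotation lifts, which is a clean and reusable fact --- it also makes transparent why conjugation preserves $\lev$ on $\tilde\De$ while inversion (Lemma~\ref{lem-inv}) must exclude it. Your reduction of the $\levm$ statement to the $\lev$ statement and your verification that conjugation preserves $\tilde\Xi$ and $\tilde\De$ are both handled correctly.
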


\begin{proof}
We shall prove the statement about the level function~$\lev$ on~$\tG$, the statement about the level function~$\levm$ on~$\Gm$ follows immediately.
The element~$B$ can be connected to the unit element in $\tG$ via a path $\be:I\to\tG$, where $I$ is some closed interval.
The path $\ga:I\to\tG$ given by $\ga(t)=\be(t)\cdot A\cdot(\be(t))^{-1}$ connects the elements~$A$ and~$B\cdot A\cdot B^{-1}$.
If $A$ is not in~$\tilde\De$ then the same is true for the conjugate~$\ga(t)$ of~$A$,
hence the path~$\ga$ remains in the same component of the set~$\tilde\Xi$.
Thus $\lev$ is constant along~$\ga$, in particular $\lev(B\cdot A\cdot B^{-1})=\lev(A)$.
If $A$ is in~$\tilde\De$ then the conjugate~$\ga(t)$ of~$A$ is also in~$\tilde\De$,
hence the path~$\ga$ remains in the same component of the set~$\tilde\De$.
Thus $\lev$ is constant along~$\ga$, in particular $\lev(B\cdot A\cdot B^{-1})=\lev(A)$.
\end{proof}

\begin{lem}
\label{lem-product-of-crossing-hyps}
If the axes of two hyperbolic elements $A$ and~$B$ in~$\tG$ intersect then $\lev(AB)=\lev(A)+\lev(B)$.
If the axes of two hyperbolic elements $A$ and~$B$ in~$\Gm$ intersect then $\levm(AB)=\levm(A)+\levm(B)$.
\end{lem}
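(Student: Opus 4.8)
The plan is to prove the statement for the level function~$\lev$ on~$\tG$; the statement for~$\levm$ on~$\Gm$ then follows immediately by reducing modulo~$m\cdot Z(\tG)$, exactly as in Lemmas~\ref{lem-inv} and~\ref{lem-conj}. First I would record the elementary fact that multiplication by the central generator shifts the level, i.e.\ $\lev(C\centrgen^k)=\lev(C)+k$ for all $C\in\tG$ and $k\in\z$: multiplication by~$\centrgen$ is a deck transformation of the covering $\tG\to G$, so it permutes the connected components of~$\tilde\Xi$ and carries the component of~$\centrgen^j$ to that of~$\centrgen^{j+1}$. Writing $A=A_0\centrgen^{\lev(A)}$ and $B=B_0\centrgen^{\lev(B)}$, where $A_0,B_0$ are the canonical (level~$0$) lifts of the images of~$A$ and~$B$, and using that~$\centrgen$ is central, one obtains $AB=A_0B_0\centrgen^{\lev(A)+\lev(B)}$. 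Hence it suffices to prove $\lev(A_0B_0)=0$ whenever $A_0,B_0$ are hyperbolic at level~$0$ with intersecting axes.

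Next I would normalise the configuration. Since~$\lev$ is conjugation invariant (Lemma~\ref{lem-conj}), and conjugation by~$C$ sends the axes of~$A_0,B_0$ to their $C$-images, which still intersect, I may assume that the axes meet at the point~$i$ and that the axis of~$A_0$ is the imaginary axis. Then the image of~$A_0$ in~$G$ is $\operatorname{diag}(\lambda,\lambda^{-1})$ with $\lambda>0$, and the image of~$B_0$ is $K_\theta\operatorname{diag}(\nu,\nu^{-1})K_\theta^{-1}$ with $\nu>0$, where $K_\theta$ is the rotation about~$i$ through the crossing angle~$\theta\in(0,\pi)$.

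The crux of the argument, which I expect to be the main obstacle, is the following trace computation. Shrinking the shift parameter of~$B_0$ to zero yields a path~$\bar B(t)$, $t\in[0,1]$, of elements sharing the axis of~$B_0$, running from the image of~$B_0$ to the identity. A direct calculation gives
$$\trace\bigl(\operatorname{diag}(\lambda,\lambda^{-1})\,\bar B(t)\bigr)=\lambda\Bigl(\nu^{t}\cos^2\tfrac\theta2+\nu^{-t}\sin^2\tfrac\theta2\Bigr)+\lambda^{-1}\Bigl(\nu^{t}\sin^2\tfrac\theta2+\nu^{-t}\cos^2\tfrac\theta2\Bigr),$$
which is a sum of strictly positive terms and therefore never vanishes. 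Consequently the path~$\bar A_0\bar B(t)$ never meets the set~$\De$ of elliptic elements of order~$2$; the real content of the lemma is precisely this fact that the product of two hyperbolic elements with crossing axes stays away from~$\De$.

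Finally I would lift and conclude. Since the projection $\bar A_0\bar B(t)$ avoids~$\De$, the canonical family~$A_0B_0(t)$ is a continuous path in~$\tilde\Xi$, so it remains in a single connected component of~$\tilde\Xi$ and~$\lev$ is constant along it. At $t=0$ we have $B_0(0)=\id$, whence $\lev(A_0B_0(0))=\lev(A_0)=0$, while at $t=1$ we have $A_0B_0(1)=A_0B_0$. Therefore $\lev(A_0B_0)=0$, and combining this with $AB=A_0B_0\centrgen^{\lev(A)+\lev(B)}$ yields $\lev(AB)=\lev(A)+\lev(B)$, as required.
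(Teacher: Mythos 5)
Your proof is correct, and its skeleton matches the paper's: reduce to canonical (level-zero) lifts, connect the product to a reference element by shrinking a shift parameter along a fixed axis, and invoke the fact that $\lev$ is constant along any path avoiding the pre-image $\tilde\De$ of the order-$2$ elliptics. Where you genuinely differ is in the crux, namely why the deformed products never meet $\De$. The paper argues synthetically: writing each hyperbolic element as a product of two rotations by $\pi$ about points of its axis, $A=R_yR_x$ and $B=R_xR_z$ with $x$ the intersection point of the axes, it gets $AB=R_yR_z$, a hyperbolic element with axis through $y$ and $z$; so every product along the deformation stays hyperbolic, in particular inside $\Xi$. You instead conjugate into a normal form and compute the trace of $\operatorname{diag}(\lambda,\lambda^{-1})\bar B(t)$ explicitly; your formula is correct, and positivity of the trace is exactly the statement that the product never becomes elliptic of order $2$ (trace zero), which is all that membership in $\Xi$ requires. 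The paper's argument is coordinate-free and yields the stronger conclusion that the product is genuinely hyperbolic, while yours is more computational but self-contained and proves precisely what is needed. Two smaller differences are to your credit: you make the reduction to level zero explicit via $\lev(C\centrgen^k)=\lev(C)+k$ and $AB=A_0B_0\centrgen^{\lev(A)+\lev(B)}$, where the paper only says ``assume without loss of generality''; and you deform only $B$, so your path ends at $A_0$, whose level is known, rather than at the identity, which slightly cleans up the endpoint analysis of the homotopy.
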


\begin{proof}
Let $\ell_A$ resp.~$\ell_B$ be the axes of~$A$ resp.~$B$.
Let $x$ be the intersection point of~$\ell_A$ and~$\ell_B$.
Any hyperbolic transformation with the axis~$\ell_A$ is a product of a rotation by~$\pi$ at some point~$y\ne x$ on~$\ell_A$ and a rotation by~$\pi$ at the point~$x$.
Similarly any hyperbolic transformation with the axis~$\ell_B$ is a product of a rotation by~$\pi$ at the point~$x$ and a rotation by~$\pi$ at some point~$z\ne x$ on~$\ell_B$.
Hence the product of any hyperbolic transformation with the axis~$\ell_A$ and any hyperbolic transformation with the axis~$\ell_B$
is a product of a rotation by~$\pi$ at a point~$y\ne x$ on~$\ell_A$ and a rotation by~$\pi$ at a point~$z\ne x$ on~$\ell_B$,
\ie it is a hyperbolic transformation with an axis going through the points~$y$ and~$z$.
Thus the product of two hyperbolic elements with distinct but intersecting axes is always a hyperbolic element.

\myskip
We shall prove the statement about the level function~$\lev$ on~$\tG$, the statement about the level function~$\levm$ on~$\Gm$ follows immediately.
Assume without loss of generality that the elements~$A,B\in\tG$ satisfy the conditions~$\lev(A)=\lev(B)=0$.
We want to show that~$\lev(AB)=0$.
Let us deform the elements~$A$ and~$B$.
We shall not change the axes but decrease the shift parameters, then the product tends to the identity element.
On the other hand we have explained that the product remains hyperbolic, i.e.\ stays in~$\tilde\Xi$.
Therefore the value of~$\lev$ on the product remains constant, i.e.\ $\lev(AB)=\lev(\id)=0$.
\end{proof}

\section{Level functions on lifts of Fuchsian groups}

\label{sec-levels-on-lifts}

\subsection{Lifting elliptic cyclic subgroups}

\begin{lem}

\label{lem-lifting-elliptics}

Let $\lat$ be an elliptic cyclic Fuchsian group of order~$p$.

\begin{enumerate}[1)]
\item
Let us assume that the numbers~$p$ and~$m$ are relatively prime.
Then the lift~$\lats$ of~$\lat$ into~$\Gm$ exists and is unique.
There is a unique element~$n\in\z/m\z$ such that~$p\cdot n+1=0\mod~m$.
The lift~$\lats$ is then determined by the following property:
If the elliptic element~$\ga=\rho_x(2\pi/p)$ is a generator of the group~$\lat$,
then the lift~$\lats$ is generated by the pre-image~$\tilde\ga$ of~$\ga$ in~$\Gm$ such that~$\levm(\tilde\ga)=n$.
\item
Let us assume that the numbers~$a$ and~$m$ are not relatively prime.
Then the group~$\lat$ can not be lifted into~$\Gm$.
\end{enumerate}
\end{lem}

\begin{proof}
Let $\ga=\rho_x(2\pi/p)$ be a generator of the group $\lat$.
To lift~$\lat$ into~$\Gm$ we have to find an element~$\tilde\ga$ in the pre-image of~$\ga$ in~$\Gm$
such that~$\tilde\ga^p=1$.
The pre-image of~$\ga$ in~$\Gm$ can be described as the coset $\{\centrgen^n\cdot r_x(2\pi/p)\st n\in\z/m\z\}$.
For the element~$r_x(2\pi/p)$ we obtain $(r_x(2\pi/p))^p=r_x(2\pi)=\centrgen$.
Hence for an element~$\centrgen^n\cdot r_x(2\pi/p)$ we obtain
$$
  (\centrgen^n\cdot r_x(2\pi/p))^p
  =\centrgen^{np}(r_x(2\pi/p))^p
  =\centrgen^{np+1}.
$$
Therefore $(\centrgen^n\cdot r_x(2\pi/p))^p=1$ if and only if $n\cdot p+1=0\mod~m$.
There exists~$n\in\z/m\z$ with~$n\cdot p+1=0\mod~m$ if and only if the numbers~$p$ and~$m$ are relatively prime.
Hence for not relatively prime $p$ and~$m$ it is impossible to lift $\lat$ into~$\Gm$.
For relatively prime~$p$ and~$m$ there is a unique lift of~$\lat$ into~$\Gm$
generated by $\centrgen^n\cdot r_x(2\pi/p)$ with~$\levm(\centrgen^n\cdot r_x(2\pi/p))=n$ and~$n\cdot p+1=0\mod~m$.
\end{proof}

\subsection{Finitely generated Fuchsian groups}

\myskip
In this section we are going to describe finitely generated (co-compact) Fuchsian groups using standard sets of generators.
The following definitions follow~\cite{Zieschang:book}:

\begin{mydef}
A {\it Riemann factor surface\/} or {\it Riemann orbifold\/}~$(P,Q)$ of signature
$$(g;l_h,l_p,l_e:p_1,\dots,p_{l_e})$$
is a topological surface~$P$ of genus~$g$ with $l_h$ holes and $l_p$ punctures
and a set~$Q=\{(x_1,p_1),\dots,(x_{l_e},p_{l_e})\}$ of points~$x_i$ in~$P$ equipped with orders~$p_i$
such that~$p_i\in\z$, $p_i\ge2$ and~$x_i\ne x_j$ for~$i\ne j$.
The set~$Q$ is called the {\it marking\/} of the Riemann factor surface~$(P,Q)$.
\end{mydef}

\begin{mydef}
Let $(P,Q=\{(x_1,p_1),\dots,(x_{l_e},p_{l_e})\})$ be a Riemann factor surface.
Two curves~$\ga_0$ and~$\ga_1$ which do not pass through exceptional points~$x_i\in Q$ are called $Q$-{\it homotopic\/} if $\ga_0$ can be deformed into~$\ga_1$ by a finite sequence of the following processes:
\begin{enumerate}[(a)]
\item
Homotopic deformations with fixed starting point such that during the deformation no exceptional point is encountered.
\item
Omitting a subcurve of~$\ga_i$ which does not contain the starting point of~$\ga_i$ and is of the form~$\de^{\pm p_i}$,
where~$\de$ is a curve on~$P$ which bounds a disk that contains exactly one exceptional point~$x_i$ in the interior.
\item
Inserting into~$\ga_i$ a subcurve which does not contain the starting point of~$\ga_i$ and is of the form~$\de^{\pm p_i}$,
where~$\de$ is a curve on~$P$ which bounds a disk that contains exactly one exceptional point~$x_i$ in the interior.
\end{enumerate}
Two curves~$\ga_0$ and~$\ga_1$ which do not pass through exceptional points~$x_i\in Q$ are called {\it freely $Q$-homotopic\/} if the base point may be moved during the deformations.
\end{mydef}

\begin{mydef}
Let $(P,Q=\{(x_1,p_1),\dots,(x_{l_e},p_{l_e})\})$ be a Riemann factor surface and~$p\in P\backslash Q$.
Then the set of $Q$-homotopy classes of curves starting and ending in~$p$ forms a group.
This group is called the $Q$-{\it fundamental group\/} or the {\it orbifold fundamental group\/}
and denoted by~$\pi^Q(P,p)$ or~$\piorb(P,p)$.
\end{mydef}

\begin{mydef}
Let $\lat$ be a Fuchsian group.
The quotient~$P=\hyp/\lat$ is a surface and the projection~$\Psi:\hyp\to P$ is a branched cover.
Let $Q$ consist of the branching points and the corresponding orders.
Then~$(P,Q)$ is a factor surface.
We say that the factor surface~$(P,Q)$ is defined by~$\lat$.
\end{mydef}

\begin{prop}
Let $\lat$ be a Fuchsian group, $(P,Q)$ the corresponding factor surface and~$p\in P\backslash Q$.
Then~$\piorb(P,p)\cong\lat$.
\end{prop}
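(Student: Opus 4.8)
The plan is to present $\piorb(P,p)$ as a quotient of the ordinary fundamental group of the punctured surface $P\setminus Q$, and then to identify that quotient with $\lat$ by covering space theory applied to the unbranched part of the projection $\Psi$.

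First I would unwind the definition of $Q$-homotopy. Write $Q=\{(x_1,p_1),\dots,(x_{l_e},p_{l_e})\}$ and, for each $i$, let $\de_i$ be a simple loop based at $p$ that bounds a disc containing only the single exceptional point $x_i$. The deformations of type (a) are exactly ordinary homotopies in $P\setminus Q$ rel basepoint, while the deformations of types (b) and (c) amount precisely to deleting or inserting the elements $\de_i^{\pm p_i}$. Hence $\piorb(P,p)$ is canonically isomorphic to $\pi_1(P\setminus Q,p)/N$, where $N$ denotes the normal closure in $\pi_1(P\setminus Q,p)$ of the set $\{\de_1^{p_1},\dots,\de_{l_e}^{p_{l_e}}\}$.

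Next I would analyse the unbranched covering. Put $\hyp'=\hyp\setminus\Psi^{-1}(Q)$. The branch points $x_i$ are precisely the images under $\Psi$ of the points of $\hyp$ with non-trivial $\lat$-stabiliser, namely the fixed points of the elliptic elements of $\lat$; consequently $\lat$ acts freely and properly discontinuously on $\hyp'$ with quotient $\hyp'/\lat=P\setminus Q$. Thus $\Psi\colon\hyp'\to P\setminus Q$ is a regular covering with deck group $\lat$, and after fixing a lift $\tilde p\in\hyp'$ of $p$ it induces the short exact sequence
$$1\longrightarrow\pi_1(\hyp',\tilde p)\longrightarrow\pi_1(P\setminus Q,p)\longrightarrow\lat\longrightarrow1,$$
so that $\lat\cong\pi_1(P\setminus Q,p)/\Psi_*\pi_1(\hyp',\tilde p)$. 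It then remains to prove the equality $\Psi_*\pi_1(\hyp',\tilde p)=N$; granting this, the isomorphisms $\piorb(P,p)\cong\pi_1(P\setminus Q,p)/N$ and $\lat\cong\pi_1(P\setminus Q,p)/\Psi_*\pi_1(\hyp',\tilde p)$ combine to yield $\piorb(P,p)\cong\lat$. Since $\hyp$ is contractible, $\hyp'$ is $\hyp$ with a discrete set of points removed, so $\pi_1(\hyp',\tilde p)$ is free and generated by small loops encircling the removed points. Near a preimage of $x_i$ the map $\Psi$ has local form $z\mapsto z^{p_i}$, so $\Psi_*$ sends such a small loop to a conjugate of $\de_i^{p_i}$; therefore $\Psi_*\pi_1(\hyp',\tilde p)$ is generated by conjugates of the elements $\de_i^{p_i}$. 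Being the image of the fundamental group of the total space of a regular covering, it is also normal in $\pi_1(P\setminus Q,p)$, and hence coincides with $N$.

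I expect this last identification $\Psi_*\pi_1(\hyp',\tilde p)=N$ to be the main obstacle: the careful bookkeeping of base points and connecting paths needed to match the free generators upstairs with the relators $\de_i^{p_i}$ downstairs, together with the verification that the local model of $\Psi$ at each preimage of $x_i$ is genuinely $z\mapsto z^{p_i}$ of degree equal to the assigned order, is where the essential content lies.
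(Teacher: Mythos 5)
The paper gives no proof of this proposition at all: it is stated as a known fact, with the surrounding theory of Riemann orbifolds, $Q$-homotopy and Fuchsian groups imported from~\cite{Zieschang:book} (and~\cite{N1972}). So there is no argument in the paper to match yours against; what can be said is that your covering-space proof is the standard one and is essentially correct. You correctly identify $\piorb(P,p)$ with $\pi_1(P\setminus Q,p)/N$, observe that $\lat$ acts freely and properly discontinuously on $\hyp'=\hyp\setminus\Psi^{-1}(Q)$ so that $\Psi\colon\hyp'\to P\setminus Q$ is a regular covering with deck group~$\lat$, and reduce everything to the equality $\Psi_*\pi_1(\hyp',\tilde p)=N$. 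One step there is compressed: from ``$\Psi_*\pi_1(\hyp',\tilde p)$ is generated by conjugates of the $\de_i^{p_i}$ and is normal'' you conclude equality with~$N$, but that needs both inclusions made explicit. Generation by conjugates of the $\de_i^{\pm p_i}$ gives $\Psi_*\pi_1(\hyp',\tilde p)\subseteq N$; conversely, since each $x_i$ has at least one preimage in $\hyp$, the image subgroup contains some conjugate $g\de_i^{p_i}g^{-1}$, and its normality then forces $\de_i^{p_i}\in\Psi_*\pi_1(\hyp',\tilde p)$, whence $N\subseteq\Psi_*\pi_1(\hyp',\tilde p)$. With that routine completion, together with the two verifications you already flag --- that the local model of $\Psi$ at a preimage of $x_i$ is $z\mapsto z^{p_i}$ with $p_i$ equal to the order of the cyclic stabiliser (which is exactly how the paper's definition of the marking $Q$ assigns the orders), and the base-point bookkeeping identifying the free generators upstairs with conjugates of the $\de_i^{\pm p_i}$ downstairs --- your proof is complete and would serve as a self-contained justification of what the paper treats as a citation.
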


\begin{mydef}
\label{def-standard-basis}
A {\it canonical system of curves\/} on a Riemann factor surface~$(P,Q=\{(x_1,p_1),\dots,(x_{l_e},p_{l_e})\})$
of signature $(g;l_h,l_p,l_e:p_1,\dots,p_{l_e})$ is a set of simply closed curves based at a point~$p\in P$
$$\{\tilde a_1,\tilde b_1,\dots,\tilde a_g,\tilde b_g,\tilde c_{g+1},\dots,\tilde c_{g+l_h+l_p+l_e}\},$$
where $n=g+l_h+l_p+l_e$, with the following properties:
\begin{enumerate}[1)]
\item
The contour $\tilde c_i$ encloses a hole in $P$ for $i=g+1,\dots,g+l_h$,
a puncture for $i=g+l_h+1,\dots,g+l_h+l_p$
and the marking point~$x_i$ for $i=g+l_h+l_p+1,\dots,n$.
\item
Any two curves only intersect at the point~$p$.
\item
In a neighbourhood of the point $p$, the curves are placed as is shown in Figure~\ref{fig-basis}.
\item
The system of curves cuts the surface~$P$ into~$l_h+l_p+l_e+1$~connected components of which $l_p+l_e$ are homeomorphic to a disc with a hole,
$l_h+1$ are discs.
The last disc has boundary
$$
  \tilde a_1\tilde b_1\tilde a_1^{-1}\tilde b_1^{-1}\dots\tilde a_g\tilde b_g\tilde a_g^{-1}\tilde b_g^{-1}
  \tilde c_g\dots\tilde c_n.
$$
\end{enumerate}
If $\{\tilde a_1,\tilde b_1,\dots,\tilde a_g,\tilde b_g,\tilde c_{g+1},\dots,\tilde c_{g+l_h+l_p+l_e}\}$ is a canonical system of curves,
then we call the corresponding set $\{a_1,b_1,\dots,a_g,b_g,c_{g+1},\dots,c_{g+l_h+l_p+l_e}\}$ of elements in the orbifold fundamental group~$\piorb(P,p)$
a {\it standard basis\/} or a {\it standard set of generators\/} of~$\piorb(P,p)$.
\end{mydef}


\begin{figure}
  \begin{center}
    \forcehmode
      \bgroup
        \beginpicture
          \setcoordinatesystem units <25 bp,25 bp>
          \multiput {\phantom{$\bullet$}} at -2 -2 2 2 /
          \plot 0 0 -2 0.5 /
          \arrow <7pt> [0.2,0.5] from -2 0.5 to -1 0.25
          \plot 0 0 -2 1 /
          \arrow <7pt> [0.2,0.5] from 0 0 to -1 0.5
          \plot 0 0 -1.6 1.6 /
          \arrow <7pt> [0.2,0.5] from 0 0 to -0.8 0.8
          \plot 0 0 -1 2 /
          \arrow <7pt> [0.2,0.5] from -1 2 to -0.5 1
          \plot 0 0 1 2 /
          \arrow <7pt> [0.2,0.5] from 1 2 to 0.5 1
          \plot 0 0 1.6 1.6 /
          \arrow <7pt> [0.2,0.5] from 0 0 to 0.8 0.8
          \plot 0 0 2 1 /
          \arrow <7pt> [0.2,0.5] from 0 0 to 1 0.5
          \plot 0 0 2 0.5 /
          \arrow <7pt> [0.2,0.5] from 2 0.5 to 1 0.25
          \plot 0 0 1.6 -1.6 /
          \arrow <7pt> [0.2,0.5] from 1.6 -1.6 to 0.8 -0.8
          \plot 0 0 1 -2 /
          \arrow <7pt> [0.2,0.5] from 0 0 to 0.5 -1
          \plot 0 0 -1 -2 /
          \arrow <7pt> [0.2,0.5] from -1 -2 to -0.5 -1
          \plot 0 0 -1.6 -1.6 /
          \arrow <7pt> [0.2,0.5] from 0 0 to -0.8 -0.8
          \put {$a_1$} [r] <-2pt,0pt> at -2 0.5
          \put {$b_1$} [r] <-2pt,0pt> at -2 1
          \put {$a_1$} [br] <0pt,2pt> at -1.6 1.6
          \put {$b_1$} [br] <0pt,2pt> at -1 2
          \put {$a_g$} [bl] <0pt,2pt> at 1 2
          \put {$b_g$} [bl] <0pt,2pt> at 1.6 1.6
          \put {$a_g$} [l] <2pt,0pt> at 2 1
          \put {$b_g$} [l] <2pt,0pt> at 2 0.5
          \put {$c_{g+1}$} [tl] <0pt,-2pt> at 1 -2
          \put {$c_{g+1}$} [tl] <0pt,-2pt> at 1.6 -1.6
          \put {$c_n$} [tr] <0pt,-2pt> at -1.6 -1.6
          \put {$c_n$} [tr] <0pt,-2pt> at -1 -2
          \put {$\dots$} at 0 1
          \put {$\dots$} at 0 -1
        \endpicture
      \egroup
  \end{center}
  \caption{Canonical system of curves}
  \label{fig-basis}
\end{figure}

\begin{mydef}
\label{def-short-seq-set}
A {\it sequential set of signature\/}~$(0;l_h,l_p,l_e:p_1,\dots,p_{l_e})$ with $l_h+l_p+l_e=3$
is a triple of elements $(C_1,C_2,C_3)$ in~$G$
such that the element $C_i$ is hyperbolic for~$i=1,\dots,l_h$, parabolic for~$i=l_h+1,\dots,l_h+l_p$
and elliptic of order~$p_{i-l_h-l_p}$ for~$i=l_h+l_p+1,\dots,l_h+l_p+l_e=3$,
their product is $C_1\cdot C_2\cdot C_3=1$,
and for some element $A\in G$ the elements~$\{\tilde C_i=A C_i A^{-1}\}_{i=1,2,3}$ are positive, have finite fixed points
and satisfy~$\tilde C_1<\tilde C_2<\tilde C_3$.
(Figure~\ref{fig-axes-seqset} illustrates the position of the axes of the elements~$\tilde C_i$
for a sequential set of signature $(0,3,0)$, \ie when all elements are hyperbolic.)
\end{mydef}


\begin{figure}
  \begin{center}
    \forcehmode
      \bgroup
        \beginpicture
          \setcoordinatesystem units <25 bp,25 bp>
          \multiput {\phantom{$\bullet$}} at -5 -1 5 2 /
          \circulararc 180 degrees from -2 0 center at -3 0
          \arrow <7pt> [0.2,0.5] from -3.99 0.04 to -4 0
          \put {$\ell(AC_1A^{-1})$} [b] <0pt,\baselineskip> at -3 1
          \circulararc 180 degrees from  1 0 center at  0 0
          \arrow <7pt> [0.2,0.5] from -0.99 0.04 to -1 0
          \put {$\ell(AC_2A^{-1})$} [b] <0pt,\baselineskip> at 0 1
          \circulararc 180 degrees from  4 0 center at  3 0
          \arrow <7pt> [0.2,0.5] from  2.01 0.04 to  2 0
          \put {$\ell(AC_3A^{-1})$} [b] <0pt,\baselineskip> at 3 1
          \plot -5 0 5 0 /
        \endpicture
      \egroup
  \end{center}
  \caption{Axes of a sequential set of signature $(0;3,0,0)$}
  \label{fig-axes-seqset}
\end{figure}

\begin{mydef}
\label{def-long-seq-set}
A {\it sequential set of signature\/}~$(0;l_h,l_p,l_e:p_1,\dots,p_{l_e})$
is a tuple of elements $(C_1,\dots,C_{l_h+l_p+l_e})$ in~$G$
such that the element $C_i$ is hyperbolic for~$i=1,\dots,l_h$,
parabolic for~$i=l_h+1,\dots,l_h+l_p$ and elliptic of order~$p_{i-l_h-l_p}$ for~$i=l_h+l_p+1,\dots,l_h+l_p+l_e$,
and for any $i\in\{2,\dots,n-1\}$ the triple $(C_1\cdots C_{i-1},C_i,C_{i+1}\cdots C_n)$ is a sequential set.
\end{mydef}

\begin{mydef}
\label{def-genus-seq-set}
A {\it sequential set of signature\/}~$(g;l_h,l_p,l_e:p_1,\dots,p_{l_e})$ is a tuple of elements
$$(A_1,\dots,A_g,B_1,\dots,B_g,C_{g+1},\dots,C_{g+l_h+l_p+l_e})$$
in~$G$ such that the elements $A_1,\dots,A_g,B_1,\dots,B_g$ are hyperbolic,
the element~$C_{g+i}$ is hyperbolic for~$i=1,\dots,l_h$, parabolic for~$i=l_h+1,\dots,l_h+l_p$
and elliptic of order~$p_{i-l_h-l_p}$ for~$i=l_h+l_p+1,\dots,l_h+l_p+l_e$,
and the tuple
$$(A_1,B_1A_1^{-1}B_1^{-1},\dots,A_g,B_gA_g^{-1}B_g^{-1},C_{g+1},\dots,C_{g+l_h+l_p+l_e})$$
is a sequential set of signature $(0;2g+l_h,l_p,l_e:p_1,\dots,p_{l_e})$.
\end{mydef}

The relation between sequential sets, standard bases, canonical systems of curves and Fuchsian groups was studied in~\cite{N1972}.
Details for the case of Fuchsian groups with elliptic elements can be found in~\cite{Zieschang:book}.
We recall here the results:

\begin{thm}
\label{seqset-fuchgr}
Let $V$~be a sequential set of signature $(g;l_h,l_p,l_e:p_1,\dots,p_{l_e})$.
For~$i=1,\dots,l_e$ let~$y_i\in\hyp$ be the fixed point of the corresponding elliptic element of order~$p_i$ in~$V$.
Let $P=\hyp/\lat$ and let $\Psi:\hyp\to P$ be the natural projection.
Let~$Q=\{(\Psi(y_1),p_1),\dots,(\Psi(y_{l_e}),p_{l_e})\}$.
Then the sequential set~$V$ generates a Fuchsian group~$\lat$
such that the Riemann factor surface $(P=\hyp/\lat,Q)$ is of signature $(g;l_h,l_p,l_e:p_1,\dots,p_{l_e})$.
The natural projection $\Psi:\hyp\to P$ maps the sequential set $V$ to a canonical system of curves on the factor surface~$(P,Q)$.
\end{thm}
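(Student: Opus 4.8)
The plan is to realise the sequential set~$V$ as the complete set of side-pairing transformations of a convex geodesic polygon in~$\hyp$ and then to invoke the Poincaré polygon theorem, which will at once give the discreteness of~$\lat$, exhibit a fundamental domain, and let us read off both the signature of the quotient and a presentation of~$\lat$. The defining conditions of a sequential set --- positivity, finite fixed points, and the linear ordering $C_1' < C_2' < C_3'$ of every consecutive triple --- are exactly the combinatorial and metric data needed to lay out the vertices and sides of this polygon and to check the hypotheses of Poincaré's theorem. I would first reduce to genus zero: by Definition~\ref{def-genus-seq-set} the tuple obtained from~$V$ by replacing each handle pair $(A_i,B_i)$ with $(A_i, B_iA_i^{-1}B_i^{-1})$ is a genus-zero sequential set of signature $(0; 2g+l_h, l_p, l_e : p_1,\dots,p_{l_e})$, so it suffices to build the polygon in the genus-zero case while remembering which pairs of sides come from a common handle.

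For the genus-zero case I would argue by induction on $l = l_h+l_p+l_e$ using the recursive structure of Definition~\ref{def-long-seq-set}, in which every consecutive triple $(C_1\cdots C_{i-1}, C_i, C_{i+1}\cdots C_l)$ is a length-three sequential set. The length-three case of Definition~\ref{def-short-seq-set} is the geometric core: after conjugating by $A$ so that $C_1' < C_2' < C_3'$ are positive with finite fixed points, the prescribed order of the boundary fixed points of the hyperbolic and parabolic elements, together with the interior fixed points of the elliptic ones, lets me draw geodesic sides cutting out a fundamental domain for the three-generator group, namely a sphere carrying three distinguished features --- a hole, a cusp, or a cone point according as $C_i$ is hyperbolic, parabolic, or elliptic. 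The relation $C_1C_2C_3 = 1$ forces the side-pairing to close up correctly at the base vertex. The inductive step attaches the next triple along the common boundary geodesic determined by $C_1\cdots C_{i-1}$; at the level of groups this is a Maskit-style combination, and the ordering condition, imposed at every stage, guarantees that the successive pieces meet only along that geodesic, so embeddedness of the polygon and preservation of discreteness are maintained.

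With the polygon in hand I would verify the hypotheses of the Poincaré polygon theorem. Side-pairing is exhibited by the generators and their inverses. The vertex cycle at the fixed point $y_i$ of an elliptic generator of order $p_i$ has angle sum $2\pi/p_i$, and the cycle transformation $C_i$ has order $p_i$, so the cycle condition holds and produces a cone point of order $p_i$ in the quotient; parabolic generators give ideal vertices, hence cusps (punctures), and hyperbolic boundary generators give holes. At the main base vertex the cycle transformation is the full product, which for the folded genus-zero tuple equals $\prod_i A_iB_iA_i^{-1}B_i^{-1}\cdot\prod_j C_{g+j}$ and hence is the identity, so the base cycle has total angle $2\pi$. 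Poincaré's theorem then yields that $\lat$ is Fuchsian, that the polygon is a fundamental domain, and that $(P=\hyp/\lat, Q)$ has signature $(g; l_h, l_p, l_e : p_1,\dots,p_{l_e})$. Reading off the boundary word finally shows that $\Psi$ sends the elements of $V$ to simple closed curves meeting only at $\Psi(p)$, arranged near the base point as in Figure~\ref{fig-basis} and cutting $P$ into the pieces described in Definition~\ref{def-standard-basis}; that is, $\Psi(V)$ is a canonical system of curves, which recovers the classical statements of the cited references.

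The hard part, I expect, will be the explicit construction of the embedded convex polygon from the purely abstract positivity-and-ordering data, together with the verification that every vertex cycle closes with the correct angle sum --- in particular matching the side-pairing combinatorics to the single defining relation so that the base-vertex cycle contributes angle exactly $2\pi$ and no spurious identifications occur. The inductive gluing is the delicate point: one must show that the partial fundamental domains attached at successive stages have disjoint interiors, and it is precisely the ordering $C_1' < C_2' < C_3'$ at each step that rules out overlaps and thereby secures both embeddedness and discreteness.
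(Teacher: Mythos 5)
First, a point of reference: the paper does not prove Theorem~\ref{seqset-fuchgr} at all --- it is recalled as a known result, with the proofs deferred to \cite{N1972} and \cite{Zieschang:book}. So your proposal can only be measured against the classical arguments in those sources, and in outline it matches them: realising the sequential set as the side-pairing system of a geodesic polygon and invoking Poincar\'e's theorem (equivalently, building the group by repeated combinations along the partial products $C_1\cdots C_{i-1}$) is the standard route, and you correctly identify where the positivity and ordering hypotheses must enter.

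That said, three points in your sketch are genuine gaps rather than deferred routine work. First, the reduction to genus zero is not just ``remembering which pairs of sides come from a common handle'': the folded tuple $(A_1,B_1A_1^{-1}B_1^{-1},\dots,A_g,B_gA_g^{-1}B_g^{-1},C_{g+1},\dots)$ generates a \emph{proper} subgroup of $\lat$ of infinite index (its quotient is a sphere with $2g+l_h$ funnels, and it does not contain the $B_i$), so after settling the genus-zero case you must adjoin each $B_i$ as a transformation gluing the boundary geodesic of $A_i$ to that of $B_iA_i^{-1}B_i^{-1}$ and prove that discreteness and the fundamental domain survive; that is a second, HNN-type combination step, not bookkeeping. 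Second, your treatment of the vertex cycles inverts the logic of Poincar\'e's theorem: the group relation $C_1\cdots C_n=1$ does not ``force'' the angle sum at the base cycle to be $2\pi$, nor does $C_i^{p_i}=1$ force the angle sum at $y_i$ to be $2\pi/p_i$ --- in Poincar\'e's theorem these angle conditions are geometric \emph{hypotheses} that must be verified for the polygon you construct, and it is exactly here (together with the disjointness of interiors in the inductive gluing, which you do flag) that the ordering $C_1'<C_2'<C_3'$ has to be used quantitatively. Third, gluing ``along the common boundary geodesic determined by $C_1\cdots C_{i-1}$'' presupposes that the partial products are hyperbolic; this needs an argument (compare Lemma~\ref{c1c2-not-ell}, which is proved in the paper only for polygon groups) or a case analysis allowing parabolic partial products, where the combination is along a cusp rather than a geodesic. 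With these three items supplied, your plan becomes a complete proof along the classical lines of the cited references.
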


\begin{thm}
\label{thm2.1inN}
Let $\lat$ be a Fuchsian group such that the factor surface $P=\hyp/\lat$ is of signature $(g;l_h,l_p,l_e:p_1,\dots,p_{l_e})$.
Let $p$ be a point in~$P$ which does not belong to the marking.
Let $\Psi:\hyp\to P$ be the natural projection.
Choose $q\in\Psi^{-1}(p)$ and let $\Phi:\lat\to\piorb(P,p)$ be the induced isomorphism.
Let
$$v=\{\tilde a_1,\tilde b_1,\dots,\tilde a_g,\tilde b_g,\tilde c_{g+1},\dots,\tilde c_n\}$$
be a canonical system of curves on~$P$.
In this case,
\begin{align*}
  V=\Phi^{-1}(v)&=\{\Phi^{-1}(a_1),\Phi^{-1}(b_1),\dots,\Phi^{-1}(a_g),\Phi^{-1}(b_g),\Phi^{-1}(c_{g+1}),\dots,\Phi^{-1}(c_n)\}\\
                &=\{A_1,B_1,\dots,A_g,B_g,C_{g+1},\dots,C_n\}
\end{align*}
is a sequential set of signature $(g;l_h,l_p,l_e:p_1,\dots,p_{l_e})$.
\end{thm}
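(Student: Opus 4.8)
\section*{Proof proposal}

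The plan is to realise $V=\Phi^{-1}(v)$ as the set of side-pairing transformations of an explicit fundamental polygon for $\lat$ in $\hyp$, and then to read off the three sequential-set conditions (Definitions~\ref{def-short-seq-set}--\ref{def-genus-seq-set}) from the geometry of that polygon. Writing $A_i=\Phi^{-1}(a_i)$, $B_i=\Phi^{-1}(b_i)$ and $C_{g+j}=\Phi^{-1}(c_{g+j})$, recall that, for the base point $p$ and the chosen lift $q\in\Psi^{-1}(p)$, the isomorphism $\Phi^{-1}$ sends a $Q$-homotopy class to the unique deck transformation in $\lat$ carrying $q$ to the endpoint of the arc lifting that class from $q$. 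I would first lift the whole canonical system $v$ to a system of arcs issuing from $q$. By property~(4) of Definition~\ref{def-standard-basis} the system cuts $(P,Q)$ into discs and punctured/holed discs, the principal one having boundary word $A_1B_1A_1^{-1}B_1^{-1}\cdots A_gB_gA_g^{-1}B_g^{-1}\,C_{g+1}\cdots C_n$; lifting this principal disc through $q$ produces a convex fundamental polygon $F$ for the action of $\lat$ on $\hyp$, whose side-pairing transformations are exactly the elements of $V$. In particular $V$ generates $\lat$ and the boundary relation of $F$ yields $A_1B_1A_1^{-1}B_1^{-1}\cdots A_gB_gA_g^{-1}B_g^{-1}\,C_{g+1}\cdots C_n=1$ in $\lat$.

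Next I would determine the dynamical type of each generator from the local model near the corresponding feature of $(P,Q)$: a curve enclosing a hole pairs two sides abutting a free boundary arc and is hyperbolic; a curve enclosing a puncture fixes a single point of $\dd\hyp$ and is parabolic; a curve enclosing a marking point of order $p_j$ is the side-pairing about a cone point and is elliptic of order $p_j$. The generators $A_i,B_i$ are hyperbolic for the same reason (they pair interior sides of $F$), and the elements $B_iA_i^{-1}B_i^{-1}$ appearing in Definition~\ref{def-genus-seq-set} are hyperbolic as conjugates of $A_i^{-1}$. This establishes all the type requirements of Definitions~\ref{def-short-seq-set}--\ref{def-genus-seq-set}.

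It then remains to verify the ordering conditions. By Definition~\ref{def-genus-seq-set} it suffices to prove that the genus-zero tuple $(A_1,B_1A_1^{-1}B_1^{-1},\dots,A_g,B_gA_g^{-1}B_g^{-1},C_{g+1},\dots,C_n)$, whose entries I denote $D_1,\dots,D_N$, is a sequential set of signature $(0;2g+l_h,l_p,l_e:p_1,\dots,p_{l_e})$; by Definition~\ref{def-long-seq-set} this reduces to checking, for each splitting index $i$, that the triple $(D_1\cdots D_{i-1},\,D_i,\,D_{i+1}\cdots D_N)$ of partial products is a sequential set in the sense of Definition~\ref{def-short-seq-set}. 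The product of each such triple equals the full boundary word and is therefore $1$, so the substance is the geometric claim that a single $A\in G$ conjugates all three entries into positive elements with finite fixed points whose fixed-point intervals lie in increasing order on $\dd\hyp$, as in Figure~\ref{fig-axes-seqset}. I would deduce this from the cyclic order in which the lifted sides of the convex polygon $F$ meet $\dd\hyp$: convexity forces the attracting and repelling endpoints of the side-pairing transformations to appear along $\dd\hyp$ in precisely the cyclic order dictated by the arrangement of the curves around $p$ in Figure~\ref{fig-basis}, and a single Möbius transformation $A$ sending the relevant boundary arc into a finite interval of $\r$ converts this cyclic order into the linear order $C_1'<C_2'<C_3'$. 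The nesting of the partial products matches the inductive structure of Definition~\ref{def-long-seq-set}, so one $A$ serves all splittings at once.

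The main obstacle is this last step: translating the purely combinatorial datum of the cyclic arrangement of the canonical curves about $p$ into the analytic ordering $C_1'<C_2'<C_3'$ on the real line. Everything else is either the local type dictionary or a formal consequence of the boundary relation, whereas here one must track carefully how the side-pairings of a convex hyperbolic polygon act on $\dd\hyp$ and how their fixed points interleave. Since this bookkeeping is classical, at this point I would appeal to the detailed treatment of canonical generating systems and fundamental polygons of Fuchsian groups in \cite{N1972} and \cite{Zieschang:book}, of which the present statement is the converse of Theorem~\ref{seqset-fuchgr}.
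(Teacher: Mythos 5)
The paper offers no proof of this statement: Theorems~\ref{seqset-fuchgr} and~\ref{thm2.1inN} are explicitly recalled from \cite{N1972} and \cite{Zieschang:book} (``We recall here the results''), so there is no internal argument to compare yours against. Your sketch is consistent with that treatment: realizing $V$ as the side pairings of a fundamental polygon, extracting the product relation from the boundary of the principal disc, and the local type dictionary (hole $\to$ hyperbolic, puncture $\to$ parabolic, cone point of order $p_j$ $\to$ elliptic of order $p_j$, and $B_iA_i^{-1}B_i^{-1}$ hyperbolic by conjugacy) are all standard, and you correctly isolate as the crux the interleaving of fixed points on $\dd\hyp$ --- which you then defer to exactly the two references the paper itself cites for the whole theorem. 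Two cautions about the portion you do argue. First, the lift of the principal disc through $q$ is bounded by lifts of the chosen curves, which need not be geodesics, so that polygon has no reason to be convex; the convexity you lean on to read off the cyclic order of endpoints on $\dd\hyp$ requires first homotoping the canonical system into geodesic (or otherwise controlled) position while preserving the combinatorics of Figure~\ref{fig-basis}, and that normalization is precisely the bookkeeping carried out in \cite{N1972} and \cite{Zieschang:book}. Second, Definition~\ref{def-long-seq-set} also requires the partial products $D_1\cdots D_{i-1}$ and $D_{i+1}\cdots D_N$ to be of the correct (hyperbolic) type, not merely the individual generators; this, too, belongs to the deferred content. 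With those caveats, your proposal is an honest outline whose load-bearing step is the citation --- which is no less than what the paper itself provides.
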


\subsection{Lifting Fuchsian groups of genus $0$}

\begin{lem}
\label{lem-canon-lift-polygon-group}
Let $(0;l_h,l_p,l_e:p_1,\dots,p_{l_e})$ with~$l_h+l_p+l_e=n$ be the signature of the sequential set $(\bC_1,\dots,\bC_n)$.
For~$i=1,\dots,n$ let~$\tC_i$ be the canonical lift of~$\bC_i$ into~$\tG$ or~$\Gm$.
Let~$\centrgen$ be the generator of the centre~$Z(\tG)$ resp.~$Z(\Gm)$.
The element~$\centrgen$ is given by the element~$r_x(\pi)$ resp.\ its projection into~$\Gm$.
Then the elements~$\tC_1,\dots,\tC_n$ satisfy the following relations: $\tC_{l_h+l_p+i}^{p_i}=\centrgen$ for~$i=1,\dots,l_e$ and
$$\tC_1\cdot\cdots\tC_n=\centrgen^{n-2}.$$
\end{lem}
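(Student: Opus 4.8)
I would prove the two assertions separately, isolating all of the geometry in the case $n=3$.

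The elliptic relations follow at once from the description of the level function. With the orientation fixed by the canonical system of curves, the generator $\bC_{l_h+l_p+i}$ is the rotation $\rho_x(2\pi/p_i)$ about its fixed point $x$; as $2\pi/p_i\in(-\pi,\pi]$ for $p_i\ge2$, its canonical lift is $r_x(2\pi/p_i)$, which is at level~$0$, so
$$\tC_{l_h+l_p+i}^{p_i}=\bigl(r_x(2\pi/p_i)\bigr)^{p_i}=r_x(2\pi)=\centrgen.$$
This is the level-$0$ version of Lemma~\ref{lem-lifting-elliptics}.

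For the product relation, note first that the case $i=2$ of Definition~\ref{def-long-seq-set} gives $\bC_1\cdots\bC_n=1$, so $\tC_1\cdots\tC_n$ projects to the identity and therefore equals $\centrgen^N$ for some integer~$N$; the task is to show $N=n-2$. I would induct on the partial products. Put $P_k=\bC_1\cdots\bC_k$, write $\widetilde P_k$ for its canonical lift and $\Pi_k=\tC_1\cdots\tC_k$, so that $\Pi_k=\centrgen^{\lev(\Pi_k)}\widetilde P_k$. Definition~\ref{def-long-seq-set} says precisely that each triple $(P_k,\bC_{k+1},P_{k+1}^{-1})$ with $1\le k\le n-2$ is a sequential triple in the sense of Definition~\ref{def-short-seq-set}. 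Granting the triple case treated below, the product of its canonical lifts is $\centrgen$, that is $\widetilde P_k\,\tC_{k+1}=\centrgen\cdot(\widetilde{P_{k+1}^{-1}})^{-1}$. Whenever $P_{k+1}$ is not elliptic of order~$2$, Lemma~\ref{lem-inv} gives $(\widetilde{P_{k+1}^{-1}})^{-1}=\widetilde P_{k+1}$ and hence $\lev(\Pi_{k+1})=\lev(\Pi_k)+1$. For $2\le j\le n-2$ the element $P_j$ is represented by a simple closed curve separating at least two marked features from at least two others, so it is hyperbolic and in particular not of order~$2$; thus the level rises by~$1$ at each of these steps and $\lev(\Pi_{n-1})=n-2$. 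Since $P_{n-1}=\bC_n^{-1}$, the final factor $\widetilde P_{n-1}\,\tC_n$ collapses and $\Pi_n=\centrgen^{n-2}$. (The remaining index $j=n-1$, at which $P_{n-1}$ may have order~$2$, contributes to both the step $k=n-2$ and the closing multiplication, and these two contributions cancel, so only the range $2\le j\le n-2$ is relevant.)

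The heart of the argument is the case $n=3$, namely $\tC_1\tC_2\tC_3=\centrgen$ for every sequential triple. I would compute $N$ on the connected space of positively ordered triples $(\bC_1,\bC_2,\bC_3)$ with $\bC_1\bC_2\bC_3=1$, on which the product of canonical lifts is a locally constant central element. Choosing an all-elliptic representative, a hyperbolic triangle with interior angles $\varphi_1,\varphi_2,\varphi_3\le\pi/2$ yields, via the decomposition of each vertex rotation into two side-reflections, $\rho_{x_1}(2\varphi_1)\rho_{x_2}(2\varphi_2)\rho_{x_3}(2\varphi_3)=\id$, with canonical lifts $r_{x_i}(2\varphi_i)$. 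Shrinking the triangle to a point, the angles increase to Euclidean values with $\varphi_1+\varphi_2+\varphi_3\to\pi$ while the three rotations tend to rotations about a common point, so their lifted product tends to $r_{x_0}(2(\varphi_1+\varphi_2+\varphi_3))\to r_{x_0}(2\pi)=\centrgen$; hence $N=1$. A triangle with one right angle (one order-$2$ cone point) is treated identically, holding that angle at $\pi/2$. Every other signature is reached by letting a cone angle decrease to~$0$, pushing its fixed point to the boundary (a cusp, hence a parabolic generator) and then outside (a geodesic boundary, hence a hyperbolic generator); along such deformations the canonical lifts vary continuously, so the integer $N$ is preserved and equals~$1$.

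The step I expect to cause the most trouble is exactly the behaviour at elliptic elements of order~$2$, where the level function and the canonical lift are discontinuous and Lemma~\ref{lem-inv} fails. This is why the base-case deformations must reach the parabolic and hyperbolic strata by \emph{decreasing} angles, never crossing $\pi/2$, and why one must verify separately that no intermediate partial product $P_j$ with $2\le j\le n-2$ has order~$2$. Fixing the orientation conventions so that the limits above yield $\centrgen^{+1}$ rather than $\centrgen^{-1}$, and making the passage through ideal and ultra-ideal vertices rigorous, is where the real care is needed; the remainder is formal manipulation of the level function through Lemma~\ref{lem-inv} together with the connectedness of the relevant spaces of triples.
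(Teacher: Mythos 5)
Your proposal is correct in substance, but it takes a genuinely different route from the paper's proof. The paper establishes the product relation by a single global deformation in the style of Milnor: it writes each generator as a product of reflections in the sides of a fundamental polygon, shrinks the whole polygon to a point so that every generator becomes elliptic, and reads off $\centrgen^{n-2}$ from the Euclidean angle sum $(n-2)\pi$; there is no induction and no statement about partial products. You instead confine all the geometry to $n=3$ (the same shrinking argument, now with angle sum $\pi$, plus deformations through the parabolic and hyperbolic strata) and recover general $n$ algebraically: Definition~\ref{def-long-seq-set} makes each $(P_k,\bC_{k+1},P_{k+1}^{-1})$ a sequential triple, Lemma~\ref{lem-inv} converts the triple identity into the step $\lev(\Pi_{k+1})=\lev(\Pi_k)+1$, and your closing observation that $\widetilde{P_{n-1}^{-1}}=\tC_n$ correctly sidesteps the possibility that $\bC_n$ has order~$2$. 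What the paper's route buys is uniformity: it never needs to know anything about the intermediate products $P_j$. What your route buys is that the delicate continuity issues are isolated in the triangle case, where they can be checked by hand; note that the paper's own proof has the same issue left implicit, since during its shrinking some rotation angles pass through $\pi$ once $n\ge4$, so strictly one must track lifts continuously rather than re-take canonical lifts at each moment. The price of your route is two facts invoked only briefly: that $P_j$ is hyperbolic for $2\le j\le n-2$ --- a strengthening of the paper's Lemma~\ref{c1c2-not-ell}, true, but deserving the same kind of geometric argument --- and the connectedness of the relevant spaces of triples within each stratum, including the configuration with two order-$2$ generators, which is reachable only as a one-sided limit (angles increasing to $\pi/2$, never through it). Finally, your induction is structurally the same as the argument the paper gives later for Proposition~\ref{arf-prop-liftexist} at the level of Arf functions (which likewise leans on Lemma~\ref{c1c2-not-ell}); in effect you have moved that induction upstream from Arf functions to lifts, which is a coherent and arguably more self-contained organization.
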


\begin{proof}
Let $\Pi$~be the canonical fundamental polygon for the group generated by the elements $\bC_1,\dots,\bC_n$ such that the generators~$\bC_i$ can be described as products
$$\bC_i=\si_i\si_{i+1}$$
of reflexions~$\si_1,\dots,\si_n$ in the edges of the polygon~$\Pi$ (suitably numbered).
Then $\si_i^2=\id$, and therefore
$$\bC_1\cdots\bC_n=(\si_1\si_2)(\si_2\si_3)\cdots(\si_{n-1}\si_n)(\si_n\si_1)=\id.$$
Lifting the elements~$\bC_i$ to their canonical lifts~$\tC_i$ in~$\tG$,
it follows that the product~$\tC_1\cdots\tC_n$ belongs to the centre~$Z(\tG)$.
As we vary the polygon~$\Pi$ continuously, this central element must also vary continuously.
But~$Z(\tG)$ is a discrete group, so $\tC_1\cdots\tC_n$ must remain constant.
In particular we can shrink the polygon~$\Pi$ down towards a point~$x$.
In the course of this continuous deformation of the fundamental polygon~$\Pi$ the hyperbolic and parabolic elements of the sequential set will become elliptic.
As we continue shrinking the polygon~$\Pi$ towards the point~$x$, the angles of the polygon tend to the angles~$\be_1,\dots,\be_n$ of some Euclidean $n$-sided polygon.
Thus the element~$\tC_i\in\tG$ tends towards the limit~$r_x(2\be_i)$, while the product~$\tC_1\cdots\tC_n$ tends towards the product
$$r_x(2\be_1)\cdots r_x(2\be_n)=r_x(2\be_1+\cdots+2\be_n).$$
Therefore, using the formula
$$\be_1+\cdots+\be_n=(n-2)\pi$$
for the sum of the angles of a Euclidean $n$-sided polygon, we see that the constant product~$\tC_1\cdots\tC_n$ must be equal to
$$r_x(2(n-2)\pi)=\centrgen^{n-2}.$$
Projecting into~$\Gm$ we get the corresponding statement in~$\Gm$.
\end{proof}

\begin{lem}
\label{lem-lift-polygon-group}
Let $(C_1,\dots,C_n)$ be an $n$-tuple of elements in~$\Gm$
such that their images $(\bC_1,\dots,\bC_n)$ in~$G$ form a sequential set of signature $(0;l_h,l_p,l_e:p_1,\dots,p_{l_e})$ with~$l_h+l_p+l_e=n$.
Then $C_1\cdots C_n=e$ if and only if
$$\levm(C_1)+\cdots+\levm(C_n)=-(n-2).$$
\end{lem}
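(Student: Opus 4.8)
The plan is to compare each $C_i$ with the canonical lift $\tC_i$ of its image $\bC_i$ and then reduce everything to Lemma~\ref{lem-canon-lift-polygon-group}, using that $\centrgen$ is central and generates a cyclic group of order~$m$. The one property I would isolate first is how the level function behaves under multiplication by a central element: for every $A\in\Gm$ and every $k\in\z$,
$$\levm(\centrgen^k A)=k+\levm(A).$$
This holds because left multiplication by $\centrgen^k$ is a self-homeomorphism of $\Gm$ (resp.~$\tG$) that, since $\centrgen^k$ lies in the kernel of $\Gm\to G$, maps $\tilde\Xi$ onto $\tilde\Xi$ and $\tilde\De$ onto $\tilde\De$, while sending the central element $\centrgen^j$ to $\centrgen^{k+j}$; hence it carries the component of level~$j$ to the component of level~$k+j$. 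In particular $\levm(\centrgen^k)=k\mod m$.

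Next I would identify the correcting central factors. Each $C_i$ and the canonical lift $\tC_i$ project to the same element $\bC_i$ of $G$, so they differ by an element of the kernel of the covering $\Gm\to G$, which is the central cyclic subgroup $\langle\centrgen\rangle$ of order~$m$. Writing $C_i=\centrgen^{k_i}\tC_i$ and applying the displayed additivity together with $\levm(\tC_i)=0$ gives $k_i=\levm(C_i)$ in $\z/m\z$.

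It then remains to compute the product. Because $\centrgen$ is central, all the factors $\centrgen^{k_i}$ can be collected to the front:
$$C_1\cdots C_n=\centrgen^{k_1+\cdots+k_n}\,\tC_1\cdots\tC_n.$$
By Lemma~\ref{lem-canon-lift-polygon-group} the remaining product of canonical lifts is $\tC_1\cdots\tC_n=\centrgen^{n-2}$, so
$$C_1\cdots C_n=\centrgen^{\,\levm(C_1)+\cdots+\levm(C_n)+(n-2)}.$$
Since $\centrgen$ has order exactly~$m$ in $\Gm$, the right-hand side equals $e$ if and only if the exponent is divisible by~$m$, which is precisely the asserted congruence $\levm(C_1)+\cdots+\levm(C_n)\equiv-(n-2)\mod m$.

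The only step demanding genuine care is the central-shift additivity in the first paragraph: one must verify that left multiplication by $\centrgen^k$ really shifts the level index by~$k$, including the borderline case of pre-images of elliptic elements of order~$2$, where $\lev$ was defined through the rotation parameter rather than through the components of $\tilde\Xi$ (there the identity follows from $\centrgen^k\cdot r_x(\xi)=r_x(2\pi k+\xi)$). Everything after that is formal manipulation inside the cyclic centre $\langle\centrgen\rangle\cong\z/m\z$.
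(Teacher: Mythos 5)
Your proof is correct and follows essentially the same route as the paper's: write each $C_i$ as a canonical lift times a central factor $\centrgen^{\levm(C_i)}$, collect the central factors, apply Lemma~\ref{lem-canon-lift-polygon-group} to the product of canonical lifts, and read off the congruence from the order of~$\centrgen$. The only difference is that you explicitly verify the level-shift property $\levm(\centrgen^k A)=k+\levm(A)$ (including the order-$2$ elliptic case), which the paper's proof uses implicitly in the decomposition $C_i=\tC_i\cdot\centrgen^{\levm(C_i)}$; this is a worthwhile clarification but not a different argument.
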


\begin{proof}
For~$i=1,\dots,n$ let~$\tC_i$ be the canonical lift of~$\bC_i$ into~$\Gm$.
The elements~$C_i$ can be written in the form
$$C_i=\tC_i\cdot u^{\levm(C_i)},$$
therefore
$$
  C_1\cdots C_n
  =(\tC_1\cdot u^{\levm(C_1)})\cdots(\tC_n\cdot u^{\levm(C_n)})
  =(\tC_1\cdots\tC_n)\cdot c^{\levm(C_1)+\cdots+\levm(C_n)}.
$$
According to Lemma~\ref{lem-canon-lift-polygon-group} the product of the elements~$\tC_i$ is
$$\tC_1\cdot\cdots\tC_n=\centrgen^{n-2},$$
hence
$$C_1\cdots C_n=\centrgen^{n-2+\levm(C_1)+\cdots+\levm(C_n)}.$$
Therefore the product~$C_1\cdots C_n$ is equal to~$e$ if and only if the exponent of~$\centrgen$ in the last equation is divisible by~$m$, i.e.~if
$$\levm(C_1)+\cdots+\levm(C_n)\equiv-(n-2)\mod~m.\qedhere$$
\end{proof}

\begin{cor}
\label{cor-product-dont-intersect}
Let $(C_1,C_2,C_3)$ be an triple of elements in~$\Gm$ with~$C_1C_2C_3=e$.
Let $\bC_i$ be the image of the element~$C_i$ in~$G$.
Let $(\bC_1,\bC_2,\bC_3)$ be a sequential set of signature $(0;l_h,l_p,l_e:p_1,\dots,p_{l_e})$ with~$l_h+l_p+l_e=3$.
Then
$$\levm(C_1\cdot C_2)=\levm(C_1)+\levm(C_2)+1$$
if the element~$C_3$ is not of order~$2$ and
$$\levm(C_1\cdot C_2)=-\levm(C_1)-\levm(C_2)-1$$
if the element~$C_3$ is of order~$2$.
\end{cor}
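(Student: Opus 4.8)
The plan is to use the relation $C_1\cdot C_2\cdot C_3=e$ to rewrite $C_1\cdot C_2=C_3^{-1}$, and then to compute $\levm(C_1\cdot C_2)=\levm(C_3^{-1})$ from two inputs: the constraint on the sum of the levels forced by the triviality of the product, and the behaviour of $\levm$ under inversion. First I would apply Lemma~\ref{lem-lift-polygon-group} with $n=3$: since the images $(\bC_1,\bC_2,\bC_3)$ form a sequential set of signature $(0;l_h,l_p,l_e:p_1,\dots,p_{l_e})$ with $l_h+l_p+l_e=3$ and $C_1\cdot C_2\cdot C_3=e$, the lemma gives
$$\levm(C_1)+\levm(C_2)+\levm(C_3)\equiv-1\mod m,$$
so $\levm(C_3)=-1-\levm(C_1)-\levm(C_2)$ in~$\z/m\z$. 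Everything then reduces to expressing $\levm(C_3^{-1})$ through $\levm(C_3)$.

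If $C_3$ is not of order~$2$, then (as I discuss below) it is not a pre-image of an elliptic element of order~$2$, so Lemma~\ref{lem-inv} applies and gives $\levm(C_3^{-1})=-\levm(C_3)$; substituting the formula for $\levm(C_3)$ produces
$$\levm(C_1\cdot C_2)=-\levm(C_3)=\levm(C_1)+\levm(C_2)+1.$$
If $C_3$ is of order~$2$, then $C_3^2=e$ in~$\Gm$, so $C_3$ is its own inverse and $\levm(C_3^{-1})=\levm(C_3)$ with no inversion formula needed; substituting again produces
$$\levm(C_1\cdot C_2)=\levm(C_3)=-\levm(C_1)-\levm(C_2)-1.$$
These are exactly the two asserted identities.

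The step that needs care is the order-$2$ case, which is precisely the one excluded from Lemma~\ref{lem-inv}: there the naive rule $\levm(C^{-1})=-\levm(C)$ fails, and the correct, much simpler input is the tautology $C_3^{-1}=C_3$ for an involution in~$\Gm$. I would therefore make the convention explicit, reading ``$C_3$ of order~$2$'' as $C_3^2=e$ in~$\Gm$; one then has to verify that in the first case the image $\bC_3$ is genuinely not an order-$2$ elliptic, so that Lemma~\ref{lem-inv} really does apply. This is where the relation $2\levm(C_3)+1\equiv0\mod m$ characterising involutive lifts of order-$2$ elliptics enters (compare Lemma~\ref{lem-lifting-elliptics}): for the triples arising from lifts of Fuchsian groups an order-$2$ elliptic image forces $C_3^2=e$, so ``$C_3$ not of order~$2$ in~$\Gm$'' coincides with ``$\bC_3$ not an order-$2$ elliptic'', and the case distinction becomes clean.
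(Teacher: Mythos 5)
Your proposal is correct and follows essentially the same route as the paper's proof: apply Lemma~\ref{lem-lift-polygon-group} with $n=3$ to get $\levm(C_1)+\levm(C_2)+\levm(C_3)\equiv-1\mod\,m$, rewrite $C_1C_2=C_3^{-1}$, and conclude via Lemma~\ref{lem-inv} when $C_3$ is not an involution and via the tautology $C_3^{-1}=C_3$ when it is. Your closing observation --- that in the non-involutive case one must still rule out $\bC_3$ being an order-$2$ elliptic with a non-involutive lift, which is automatic for triples arising from lifts of Fuchsian groups, the setting in which the corollary is actually applied (e.g.\ in Lemma~\ref{lem-hsi-rules}) --- is a genuine subtlety that the paper's proof passes over silently, so your treatment is, if anything, more careful than the original.
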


\begin{proof}
According to Lemma~\ref{lem-lift-polygon-group} the elements~$C_i$ satisfy
$$\levm(C_1)+\levm(C_2)+\levm(C_3)\equiv-1\mod~m.$$
On the other hand~$C_1C_2C_3=e$ implies~$C_1C_2=C_3^{-1}$, hence
$$\levm(C_1C_2)=\levm(C_3^{-1})=-\levm(C_3)=\levm(C_1)+\levm(C_2)+1$$
if the element~$C_3$ is not of order~$2$ and
$$\levm(C_1C_2)=\levm(C_3^{-1})=\levm(C_3)=-\levm(C_1)-\levm(C_2)-1$$
if the element~$C_3$ is of order~$2$.
\end{proof}

\subsection{Lifting sets of generators of Fuchsian groups}

\label{first-rem-FG-into-tG}


\begin{lem}
\label{lem-liftonerel}
Let $\lat$ be a Fuchsian group of signature $(g;l_h,l_p,l_e:p_1,\dots,p_{l_e})$ generated by the sequential set
$\bV=\{\bA_1,\bB_1,\dots,\bA_g,\bB_g,\bC_{g+1},\dots,\bC_n\}$, where $n=g+l_h+l_p+l_e$.
Let $V=\{A_1,B_1,\dots,A_g,B_g,C_{g+1},\dots,C_n\}$
be a set of lifts of the elements of the sequential set $\bV$ into~$\Gm$,
\ie the image of $A_i$, $B_i$ resp.~$C_j$ in $G$ is $\bA_i$, $\bB_i$ resp.~$\bC_j$.
Then the subgroup $\lats$ of $\Gm$ generated by $V$ is a lift of $\lat$ into~$\Gm$ if and only if
$$
  [A_1,B_1]\cdots[A_g,B_g]\cdot C_{g+1}\cdots C_n=e,\quad
  C_{g+l_h+l_p+i}^{p_i}=e\quad\text{for}~i=1,\dots,l_e.
$$
\end{lem}

\begin{proof}
For any choice of the set of lifts $V$ the restriction of the covering map
$\Gm\to G$ to the group $\lats$ generated by $V$ is a homomorphism with image $\lat$.
If the conditions of the lemma hold true, then the group~$\lats$ satisfies the same relations as the group~$\lat$,
hence this homomorphism is injective.
\end{proof}

\begin{lem}
\label{lem-lift-glm}
Let
$$\{A_1,B_1,\dots,A_g,B_g,C_{g+1},\dots,C_n\}$$
be a tuple of elements in~$\Gm$
such that the images
$$\{\bA_1,\bB_1,\dots,\bA_g,\bB_g,\bC_{g+1},\dots,\bC_n\}$$
in $G$ form a sequential set of signature $(g;l_h,l_p,l_e:p_1,\dots,p_{l_e})$ with~$g+l_h+l_p+l_e=n$.
Then
$$
  \prod\limits_{i=1}^g\,[A_i,B_i]\cdot\prod\limits_{j=g+1}^n\,C_j=e\iff 
  \sum\limits_{j=g+1}^n\,\levm(C_j)\equiv(2-2g)-(n-g)\mod~m.
$$
(in the case $n=g$ this means $2-2g\equiv0\mod~m$)
and for any~$i=1,\dots,l_e$
$$C_{g+l_h+l_p+i}^{p_i}=e\iff p_i\cdot\levm(C_{g+l_h+l_p+i})+1\equiv0\mod\,m.$$
\end{lem}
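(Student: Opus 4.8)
The plan is to deduce both equivalences from the genus-zero results already in hand, using Definition~\ref{def-genus-seq-set} to reduce the genus-$g$ long relation to a genus-zero product relation, and treating the elliptic relations separately since they are local.

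I would first dispose of the elliptic relations. Fix $i$ and write $C=C_{g+l_h+l_p+i}$, a lift of an elliptic element of order $p_i$, and set $k=\levm(C)$. Writing $C=\tC\cdot\centrgen^{k}$ for the canonical lift $\tC$, the computation in Lemma~\ref{lem-lifting-elliptics} (equivalently the first relation of Lemma~\ref{lem-canon-lift-polygon-group}) gives $\tC^{p_i}=\centrgen$, so by centrality of $\centrgen$ we obtain $C^{p_i}=\tC^{p_i}\centrgen^{kp_i}=\centrgen^{p_i k+1}$. Since $\centrgen^m=e$ in $\Gm$, this is trivial exactly when $p_i\cdot\levm(C)+1\equiv0\mod\,m$, which is the second equivalence.

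The crux of the first equivalence is the observation that the genus-$g$ long relation is precisely the genus-zero product relation for the derived tuple of Definition~\ref{def-genus-seq-set}. Put
$$(D_1,\dots,D_N)=(A_1,\,B_1A_1^{-1}B_1^{-1},\,\dots,\,A_g,\,B_gA_g^{-1}B_g^{-1},\,C_{g+1},\dots,C_n),\qquad N=n+g.$$
By Definition~\ref{def-genus-seq-set} the images of the $D_i$ in $G$ form a sequential set of signature $(0;2g+l_h,l_p,l_e:p_1,\dots,p_{l_e})$, and grouping consecutive pairs by means of $A_i\cdot(B_iA_i^{-1}B_i^{-1})=[A_i,B_i]$ shows that $D_1\cdots D_N=\prod_{i=1}^g[A_i,B_i]\cdot\prod_{j=g+1}^nC_j$. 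Hence the relation on the left holds if and only if $D_1\cdots D_N=e$, and Lemma~\ref{lem-lift-polygon-group} applied to $(D_1,\dots,D_N)$ tells us this happens if and only if $\sum_{i=1}^N\levm(D_i)\equiv-(N-2)\mod\,m$.

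To finish I would evaluate the level sum. By Lemma~\ref{lem-conj} we have $\levm(B_iA_i^{-1}B_i^{-1})=\levm(A_i^{-1})$, and since each $A_i$ is hyperbolic, hence never a pre-image of an elliptic element of order $2$, Lemma~\ref{lem-inv} gives $\levm(A_i^{-1})=-\levm(A_i)$. Thus the contributions of $A_i$ and $B_iA_i^{-1}B_i^{-1}$ cancel in pairs, leaving $\sum_{i=1}^N\levm(D_i)=\sum_{j=g+1}^n\levm(C_j)$. Substituting $N=n+g$ turns the congruence $\sum_i\levm(D_i)\equiv-(N-2)$ into $\sum_{j=g+1}^n\levm(C_j)\equiv2-n-g=(2-2g)-(n-g)\mod\,m$, as claimed; in the degenerate case $n=g$ there are no $C_j$ and this reads $2-2g\equiv0\mod\,m$. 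The only delicate points are the careful regrouping that identifies $D_1\cdots D_N$ with the commutator relation and the verification that hyperbolicity of the $A_i$ rules out the order-$2$ exception in Lemma~\ref{lem-inv}; beyond that the argument is pure bookkeeping on top of the genus-zero lemma.
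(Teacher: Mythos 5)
Your proof is correct and follows essentially the same route as the paper's: reduce to genus zero via the derived tuple $(A_1,B_1A_1^{-1}B_1^{-1},\dots,C_n)$ of Definition~\ref{def-genus-seq-set}, apply Lemma~\ref{lem-lift-polygon-group}, and cancel the $A_i$-contributions using Lemmas~\ref{lem-conj} and~\ref{lem-inv}. Your treatment of the elliptic relations via $C^{p_i}=\centrgen^{p_ik+1}$ is just the computation inside Lemma~\ref{lem-lifting-elliptics}, which is exactly what the paper cites for that part.
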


\begin{proof}
The case~$g=0$ was discussed in Lemma~\ref{lem-lift-polygon-group}.
We shall now reduce the general case to the case~$g=0$.
By definition of sequential sets the set
$$(\bA_1,\bB_1\bA_1^{-1}\bB_1^{-1},\dots,\bA_g,\bB_g\bA_g^{-1}\bB_g^{-1},\bC_{g+1},\dots,\bC_{n})$$
is a sequential set of signature $(0;2g+l_h,l_p,l_e)$, hence
$$
  \prod\limits_{i=1}^g\,[A_i,B_i]\cdot\prod\limits_{i=g+1}^n C_i
  =\prod\limits_{i=1}^g\,(A_i\cdot B_i A_i^{-1}B_i^{-1})\cdot\prod\limits_{i=g+1}^n C_i
  =e
$$
if and only if
\begin{align*}
  \sum\limits_{i=1}^g\,(\levm(A_i)+\levm(B_i A_i^{-1}B_i^{-1}))+\sum\limits_{i=g+1}^n\levm(C_i)
  &\equiv-(2g+(n-g)-2)\\
  &\equiv(2-2g)-(n-g)\mod~m.
\end{align*}
Invariance of the level function~$\levm$ under conjugation (Lemma~\ref{lem-conj}) implies that
$$\levm(B_i A_i^{-1}B_i^{-1})=\levm(A_i^{-1}).$$
Since~$A_i$ is not an element of order~$2$, 
$$\levm(A_i^{-1})=-\levm(A_i),$$
and hence~$\levm(B_i A_i^{-1} B_i^{-1})=-\levm(A_i)$ and
$$\levm(A_i)+\levm(B_i A_i^{-1}B_i^{-1})=\levm(A_i)-\levm(A_i)=0.$$
The last statement of the lemma follows from Lemma~\ref{lem-lifting-elliptics}.
\end{proof}

\begin{prop}
\label{prop-lift-exists}
Let $\lat$ be a Fuchsian group of signature $(g:p_1,\dots,p_r)$.
Let $\bV=\{\bA_1,\bB_1,\dots,\bA_g,\bB_g,\bC_{g+1},\dots,\bC_n\}$ be a sequential set that generates~$\lat$.
Then there exist lifts of~$\lat$ into~$\Gm$ if and only if the signature $(g:p_1,\dots,p_r)$ satisfies the following liftability conditions:
$\gcd(p_i,m)=1$ for~$i=1,\dots,r$ and
$$(p_1\cdots p_r)\cdot\left(\sum\limits_{i=1}^r\,\frac1{p_i}-(2g-2)-r\right)\equiv0\mod m.$$
Moreover, if the liftability conditions are satisfied then any set of lifts~$\{A_i,B_i\}$ of~$\{\bA_i,\bB_i\}$ into~$\Gm$
can be extended in a unique way to a set $\{A_i,B_i,C_j\}$ of lifts of~$\{\bA_i,\bB_i,\bC_j\}$ that generates a lift of~$\lat$ into~$\Gm$,
hence there are $m^{2g}$ different lifts of~$\lat$ into~$\Gm$.
\end{prop}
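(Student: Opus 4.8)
The plan is to combine the two previous lemmas, which already reduce the existence of a lift to a system of congruences on the level function~$\levm$. By Lemma~\ref{lem-liftonerel}, a set of lifts $V=\{A_1,B_1,\dots,A_g,B_g,C_{g+1},\dots,C_n\}$ (here $n=g+r$, since in signature $(g:p_1,\dots,p_r)$ all the generators $\bC_{g+i}$ are elliptic of order~$p_i$) generates a lift~$\lats$ of~$\lat$ into~$\Gm$ if and only if the long relation $[A_1,B_1]\cdots[A_g,B_g]\cdot C_{g+1}\cdots C_n=e$ together with the elliptic relations $C_{g+i}^{p_i}=e$ for $i=1,\dots,r$ hold. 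So the whole problem is to decide when these relations are simultaneously solvable by a choice of lifts.

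First I would translate each relation through Lemma~\ref{lem-lift-glm}. The elliptic relation $C_{g+i}^{p_i}=e$ is equivalent to $p_i\cdot\levm(C_{g+i})+1\equiv0\mod m$, and the long relation is equivalent to $\sum_{j=g+1}^n\levm(C_j)\equiv(2-2g)-r\mod m$, the levels of the $A_i,B_i$ dropping out as in the proof of that lemma. Now the congruence $p_i x+1\equiv0\mod m$ is solvable for $x=\levm(C_{g+i})$ if and only if $\gcd(p_i,m)=1$, which is precisely Lemma~\ref{lem-lifting-elliptics}; and when it holds the solution $x=n_i$ is unique in~$\z/m\z$. This yields the first liftability condition and forces the lift of each $C_{g+i}$: since $\levm(C_{g+i})=n_i$ is determined, $C_{g+i}$ is the unique preimage of $\bC_{g+i}$ at that level.

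The key computation is to check that, under $\gcd(p_i,m)=1$, substituting the forced values $n_i$ into the sum congruence reproduces the displayed congruence. Since every $p_i$ is invertible modulo~$m$, so is $p_1\cdots p_r$, and multiplying the displayed relation by its inverse rewrites it as $\sum_{i=1}^r p_i^{-1}\equiv(2g-2)+r\mod m$, where $p_i^{-1}$ denotes the inverse modulo~$m$. From $p_i n_i\equiv-1$ one reads off $p_i^{-1}\equiv-n_i$, so this is the same as $\sum_{i=1}^r n_i\equiv(2-2g)-r\mod m$, i.e.\ exactly the sum congruence. Hence, granted the coprimality conditions, the sum relation holds if and only if the displayed congruence holds, which completes the equivalence for existence.

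Finally, for the counting statement I would argue that once the liftability conditions hold the values $\levm(C_{g+i})=n_i$ are forced, so the elements $C_{g+i}$ are uniquely determined, and the long relation then holds automatically for \emph{any} lifts $A_i,B_i$ because these contribute $0$ to the total level. Thus a lift is determined exactly by a free choice of $A_1,B_1,\dots,A_g,B_g$. The preimage in~$\Gm$ of an element of~$G$ is a coset of the centre $Z(\Gm)\cong\z/m\z$ and so has exactly $m$ elements, giving $m$ choices for each of the $2g$ elements; since distinct tuples of lifts give distinct subgroups~$\lats$ (the element of~$\lats$ over a given generator is unique), these choices biject with the lifts and yield $m^{2g}$ in total. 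I expect the main obstacle to be the bookkeeping in the key computation—passing cleanly between the integer congruence involving $p_1\cdots p_r$ and its modular-inverse form—and phrasing the uniqueness of the extension so that the long relation is genuinely automatic rather than an extra constraint.
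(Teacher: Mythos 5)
Your proof is correct and follows essentially the same route as the paper's own argument: reduce via Lemma~\ref{lem-liftonerel} and Lemma~\ref{lem-lift-glm} to the level congruences, observe that $p_i\levm(C_{g+i})+1\equiv0\pmod m$ forces $\gcd(p_i,m)=1$ and determines $\levm(C_{g+i})$ uniquely, and then your modular-inverse manipulation (using $p_i^{-1}\equiv-n_i$) is the same computation the paper performs by keeping the factor $p_1\cdots p_r$ and cancelling it at the end. The counting of $m^{2g}$ lifts via the free choice of $\levm(A_i),\levm(B_i)$ also matches the paper, with your remark that distinct generator tuples give distinct subgroups being a slightly more explicit justification than the paper offers.
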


\begin{proof}
Let us first assume that there exists a lift of~$\lat$ into~$\Gm$.
Let $\{A_i,B_i,C_j\}$ be a set of lifts of~$\bar V$ as in Lemmas~\ref{lem-liftonerel} and~\ref{lem-lift-glm}.
Let~$n_i=\levm(C_{g+i})$.
Then according to Lemma~\ref{lem-lift-glm} we have $p_i\cdot n_i+1\equiv0\mod\,m$ for~$i=1,\dots,r$ and
$$(2g-2)+r+\sum\limits_{i=1}^r\,n_i\equiv0\mod\,m.$$
The first set of congruences implies that $p_i$ is prime with~$m$ for~$i=1,\dots,r$.
The last congruence implies that
\begin{align*}
  0
  &\equiv(p_1\cdots p_r)\cdot\left((2g-2)+r+\sum\limits_{i=1}^r\,n_i\right)\\
  &\equiv(p_1\cdots p_r)\cdot((2g-2)+r)+\sum\limits_{i=1}^r\,\frac{p_1\cdots p_r}{p_i}\cdot(p_i\cdot n_i)\\
  &\equiv(p_1\cdots p_r)\cdot((2g-2)+r)+\sum\limits_{i=1}^r\,\frac{p_1\cdots p_r}{p_i}\cdot(-1)\\
  &\equiv(p_1\cdots p_r)\cdot\left((2g-2)+r-\sum\limits_{i=1}^r\,\frac1{p_i}\right).
\end{align*}
Now let us assume that the liftability conditions are satisfied.
We want to construct a lift of~$\lat$ into~$\Gm$.
Since $p_i$ is prime with~$m$, we can choose~$n_i\in\z/m\z$ such that $p_i\cdot n_i+1\equiv0\mod\,m$ for~$i=1,\dots,r$.
Then
\begin{align*}
  &(p_1\cdots p_r)\cdot\left((2g-2)+r+\sum\limits_{i=1}^r\,n_i\right)\\
  &\equiv(p_1\cdots p_r)\cdot((2g-2)+r)+\sum\limits_{i=1}^r\,\frac{p_1\cdots p_r}{p_i}\cdot(p_in_i)\\
  &\equiv(p_1\cdots p_r)\cdot((2g-2)+r)+\sum\limits_{i=1}^r\,\frac{p_1\cdots p_r}{p_i}\cdot(-1)\\
  &\equiv(p_1\cdots p_r)\cdot\left((2g-2)+r-\sum\limits_{i=1}^r\,\frac1{p_i}\right)
  \equiv0\mod\,m.
\end{align*}
Since $\gcd(p_i,m)=1$,
the equality $(p_1\cdots p_r)\cdot\left((2g-2)+r+\sum\limits_{i=1}^r\,n_i\right)\equiv0\mod\,m$
implies $(2g-2)+r+\sum\limits_{i=1}^r\,n_i\equiv0\mod\,m$, i.e. $\sum\limits_{i=1}^r\,n_i\equiv(2-2g)-r\mod\,m$.
Let $V=\{A_i,B_i,C_j\}$ be any set of lifts of~$\bar V$ such that~$\levm(C_{g+i})=n_i$ for~$i=1,\dots,r$.
We have $p_i\cdot n_i+1\equiv0\mod\,m$ for~$i=1,\dots,r$ and $\sum\limits_{i=1}^r\,n_i\equiv(2-2g)-r\mod\,m$,
hence according to Lemma~\ref{lem-lift-glm} the set~$V$ generates a lift of~$\lat$ into~$\Gm$.
Since Lemma~\ref{lem-lift-glm} does not impose any conditions on the values~$\levm(A_i)$ and~$\levm(B_i)$ for~$i=1,\dots,g$,
any of $m^{2g}$ choices of these $2g$ values leads to a different lift of~$\lat$ into~$\Gm$. 
\end{proof}

\section{Higher Arf functions}

\label{sec-m-arf}

In~\cite{NP1} we introduced the notion of a higher Arf function
and used it to study moduli spaces of higher spin bundles on Riemann surfaces.
In this section we will introduce higher Arf functions on orbifolds,
and study their connection with Gorenstein automorphy factors.

\subsection{Definition of higher Arf functions on orbifolds}

\label{m-arf}

\myskip
In this subsection we will define higher Arf functions on orbifolds (compare with subsection~4.1 in~\cite{NP1}).

\myskip
Let $\lat$ be a Fuchsian group of signature~$(g;l_h,l_p,l_e:p_1,\dots,p_{l_e})$ and $P=\hyp/\lat$ the corresponding orbifold.
Let $p\in P$.
Let $\Psi:\hyp\to P$ be the natural projection.
Choose $q\in\Psi^{-1}(p)$ and let $\Phi:\lat\to\piorb(P,p)$ be the induced isomorphism.
Let $\lats$ be a lift of $\lat$ in $\Gm$.

\begin{mydef}
\label{def-hsi-to-lats}
Let us consider a function $\hsi_{\lats}:\piorb(P,p)\to\z/m\z$ such that the following diagram commutes
$$
  \begin{CD}
   \lat            @>{\cong}>> \lats         \\
   @V{\Phi}VV         @VV{\levm|_{\lats}}V \\
   \piorb(P,p)     @>{\hsi_{\lats}}>> \z/m\z          \\
  \end{CD}
$$
\end{mydef}

\begin{lem}
\label{lem-hsi-rules}
Let $\al$, $\be$, and $\ga$ be simple contours in $P$
intersecting pairwise in exactly one point~$p$.
Let $a$, $b$, and $c$ be the corresponding elements of $\piorb(P,p)$.
We assume that $a$, $b$, and $c$ satisfy the relations $a,b,c\ne1$ and $abc=1$.
Let $\<\cdot,\cdot\>$ be the intersection form on $\piorb(P,p)$.
Then for $\hsi=\hsi_{\lats}$
\begin{enumerate}[1.]
\item
If the elements~$a$ and~$b$ can be represented by a pair of simple contours in $P$ intersecting in exactly one point~$p$ with $\<a,b\>\ne0$,
then~$\hsi(ab)=\hsi(a)+\hsi(b)$.
\item
If $ab$ is in~$\piorbO(P,p)$
and the elements~$a$ and~$b$ can be represented by a pair of simple contours in $P$ intersecting in exactly one point~$p$ with $\<a,b\>=0$
and placed in a neighbourhood of the point~$p$ as shown in Figure~\ref{fig-pos-pair},
then~$\hsi(ab)=\hsi(a)+\hsi(b)+1$ if the element~$ab$ is not of order~$2$
and~$\hsi(ab)=-\hsi(a)-\hsi(b)-1$ if the element~$ab$ is of order~$2$.


\begin{figure}
  \begin{center}
    \forcehmode
      \bgroup
        \beginpicture
          \setcoordinatesystem units <25 bp,25 bp>
          \multiput {\phantom{$\bullet$}} at -2 0 2 2 /
          \plot 0 0 -2 1 /
          \arrow <7pt> [0.2,0.5] from -2 1 to -1 0.5
          \plot 0 0 -1 2 /
          \arrow <7pt> [0.2,0.5] from 0 0 to -0.5 1
          \plot 0 0 1 2 /
          \arrow <7pt> [0.2,0.5] from 1 2 to 0.5 1
          \plot 0 0 2 1 /
          \arrow <7pt> [0.2,0.5] from 0 0 to 1 0.5
          \put {$a$} [br] <0pt,2pt> at -2 1
          \put {$a$} [br] <0pt,2pt> at -1 2
          \put {$b$} [bl] <0pt,2pt> at 2 1
          \put {$b$} [bl] <0pt,2pt> at 1 2
        \endpicture
      \egroup
  \end{center}
  \caption{$\hsi(ab)=\hsi(a)+\hsi(b)+1$}
  \label{fig-pos-pair}
\end{figure}

\item
if $ab$ is in~$\piorbO(P,p)$
and the elements~$a$ and~$b$ can be represented by a pair of simple contours in $P$ intersecting in exactly one point~$p$ with $\<a,b\>=0$
and placed in a neighbourhood of the point~$p$ as shown in Figure~\ref{fig-neg-pair},
then~$\hsi(ab)=\hsi(a)+\hsi(b)-1$.
\item
For any standard basis
$$v=\{a_1,b_1,\dots,a_g,b_g,c_{g+1},\dots,c_n)\}$$
of $\piorb(P,p)$ we have
$$\sum\limits_{i=g+1}^n\,\hsi(c_i)\equiv(2-2g)-(n-g)\mod\,m.$$
\item
For any elliptic element~$c_{g+l_h+l_p+i}$, $i=1,\dots,l_e$, we have $p_i\cdot\hsi(c_{g+l_h+l_p+i})+1\equiv0\mod\,m$.
\end{enumerate}
\end{lem}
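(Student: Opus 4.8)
The plan is to read everything off the defining commutative diagram of Definition~\ref{def-hsi-to-lats}: for $a\in\piorb(P,p)$, if $A\in\lats$ denotes the lift of $\Phi^{-1}(a)\in\lat$, then $\hsi(a)=\levm(A)$, and the restriction of the covering map $\Gm\to G$ to $\lats$ is an isomorphism carrying products and inverses in $\lats$ to those in $\lat$. Hence every assertion about $\hsi$ is an assertion about the level function $\levm$ on elements of the lift $\lats\subset\Gm$, and each of the five items follows from a result of Sections~\ref{sec-level-functions} and~\ref{sec-levels-on-lifts}. Statements~4 and~5 are the most direct: a standard basis $\{a_1,b_1,\dots,c_n\}$ is the image under $\Phi$ of a sequential set generating $\lat$, its lifts $\{A_i,B_i,C_j\}$ generate $\lats$, and since $\lats$ is a lift the relations of Lemma~\ref{lem-liftonerel} hold. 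Lemma~\ref{lem-lift-glm} then translates these relations verbatim into $\sum_{i=g+1}^n\hsi(c_i)\equiv(2-2g)-(n-g)$ and $p_i\cdot\hsi(c_{g+l_h+l_p+i})+1\equiv0$, which are exactly items~4 and~5.

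For statement~1 I would argue that a pair of simple contours meeting once at $p$ with $\<a,b\>\ne0$ lifts to a pair of hyperbolic elements $\bA,\bB\in\lat$ whose axes intersect: nonzero algebraic intersection of the contours is precisely the transversal crossing of the corresponding geodesics, hence of the axes of the covering transformations through the chosen point $q\in\Psi^{-1}(p)$. Lemma~\ref{lem-product-of-crossing-hyps}, in its $\Gm$ form, then gives $\levm(AB)=\levm(A)+\levm(B)$, that is, $\hsi(ab)=\hsi(a)+\hsi(b)$.

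Statements~2 and~3 are the case $\<a,b\>=0$, where the axes do not cross and $ab$ is again a simple contour. Here the triple $(a,b,(ab)^{-1})$ satisfies $a\cdot b\cdot(ab)^{-1}=1$ and, in the arrangement of Figure~\ref{fig-pos-pair}, is the image under $\Phi$ of a sequential set of signature $(0;l_h,l_p,l_e:\dots)$ with $l_h+l_p+l_e=3$ (the genus-zero, three-contour configuration of Theorem~\ref{seqset-fuchgr}). Applying Corollary~\ref{cor-product-dont-intersect} to the lifts $(A,B,(AB)^{-1})$ gives statement~2 at once, the two cases corresponding to whether the third element $(AB)^{-1}$ — equivalently $ab$ — has order~$2$. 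For statement~3 I would observe that the negative pair of Figure~\ref{fig-neg-pair} is obtained from the positive pair of Figure~\ref{fig-pos-pair} by reversing the orientations of both contours, so that $(a^{-1},b^{-1})$ sits in a positive arrangement with $a^{-1}$ first. Statement~2 applied to $(a^{-1},b^{-1})$ yields $\hsi(a^{-1}b^{-1})=-\hsi(a)-\hsi(b)+1$; since $a^{-1}b^{-1}=(ba)^{-1}$, Lemma~\ref{lem-inv} turns this into $\hsi(ba)=\hsi(a)+\hsi(b)-1$, and the conjugation invariance of $\levm$ (Lemma~\ref{lem-conj}), giving $\hsi(ab)=\hsi(ba)$ via $ab=a(ba)a^{-1}$, finishes statement~3.

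The main obstacle is not the algebra of $\levm$, which is already in hand, but the geometric dictionary between the contour-theoretic hypotheses and the hyperbolic data: that $\<a,b\>\ne0$ for simple contours corresponds exactly to intersecting axes, that the neighbourhood picture of Figure~\ref{fig-pos-pair} is precisely the one making $(a,b,(ab)^{-1})$ a sequential set in the correct cyclic order and positivity, and that Figure~\ref{fig-neg-pair} is the orientation-reversal of Figure~\ref{fig-pos-pair}. These identifications rest on the correspondence between canonical systems of curves and sequential sets in Theorems~\ref{seqset-fuchgr} and~\ref{thm2.1inN}, and one must check that $a,b,ab$ are genuinely non-trivial (so that $ab\in\piorbO$ and the order-$2$ dichotomy is exhaustive), taking appropriate care in the exceptional case where $ab$ has order~$2$.
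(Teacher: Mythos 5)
Your proposal is correct and follows essentially the same route as the paper: items~4 and~5 via Lemma~\ref{lem-lift-glm}, item~1 via intersecting axes and Lemma~\ref{lem-product-of-crossing-hyps}, item~2 via Corollary~\ref{cor-product-dont-intersect} applied to the genus-zero sequential set, and item~3 by reducing the negative pair to the positive one through inversion (the paper phrases this as ``$V^{-1}$ is sequential'' and applies the Corollary to $(b^{-1},a^{-1},c^{-1})$ directly, writing $\hsi(ab)=\pm\hsi(b^{-1}a^{-1})$, which spares your extra conjugation step $ab=a(ba)a^{-1}$ but is otherwise the same computation). Your flagged caveat about the order-$2$ case of $ab$ is resolved in the paper by writing out both branches explicitly, and both texts share the same implicit assumption that $a$ and $b$ themselves are not of order~$2$ when invoking $\hsi(a^{-1})=-\hsi(a)$.
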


\begin{proof}
According to Theorem~\ref{thm2.1inN}
either the set
$$V=\{\Phi^{-1}(a),\Phi^{-1}(b),\Phi^{-1}(c)\}$$
or the set
$$V^{-1}=\{\Phi^{-1}(a^{-1}),\Phi^{-1}(b^{-1}),\Phi^{-1}(c^{-1})\}$$
is sequential.
This sequential set can be of signature $(0:*,*,*)$ or $(1:*)$.
\begin{enumerate}[$\bullet$]
\item
If $V$ is a sequential set of signature $(1:*)$, then according to Lemma~\ref{lem-product-of-crossing-hyps} we obtain
$$\hsi(ab)=\hsi(a)+\hsi(b).$$
\item
If $V$ is a sequential set of signature $(0:*,*,*)$,
then according to Corollary~\ref{cor-product-dont-intersect} we obtain
$$\hsi(ab)=\hsi(a)+\hsi(b)+1$$
if the element~$ab$ is not of order~$2$ and
$$\hsi(ab)=-\hsi(a)-\hsi(b)-1$$
if the element~$ab$ is of order~$2$.
\item
If $V^{-1}$ is a sequential set of signature $(0:*,*,*)$, then according to Corollary~\ref{cor-product-dont-intersect} we obtain,
$$\hsi(b^{-1}a^{-1})=\hsi(a^{-1})+\hsi(b^{-1})+1$$
if the element~$ab$ is not of order~$2$ and
$$\hsi(b^{-1}a^{-1})=-\hsi(a^{-1})-\hsi(b^{-1})-1$$
if the element~$ab$ is of order~$2$.
Therefore for the element~$ab$ not of order~$2$ we obtain
\begin{align*}
  \hsi(ab)
  &=-\hsi((ab)^{-1})
  =-\hsi(b^{-1}a^{-1})\\
  &=-(\hsi(a^{-1})+\hsi(b^{-1})+1)\\
  &=-\hsi(a^{-1})-\hsi(b^{-1})-1\\
  &=\hsi(a)+\hsi(b)-1
\end{align*}
and for the element~$ab$ of order~$2$ we obtain
\begin{align*}
  \hsi(ab)
  &=\hsi((ab)^{-1})
  =\hsi(b^{-1}a^{-1})\\
  &=-\hsi(a^{-1})-\hsi(b^{-1})-1\\
  &=\hsi(a)+\hsi(b)-1.
\end{align*}
\end{enumerate}
To prove properties~4 and~5 of $\hsi$ we apply Lemma~\ref{lem-lift-glm}.
\end{proof}

We now formalize the properties of the function $\hsi$ in the following definition:

\begin{mydef}
\label{def-m-arf}
We denote by $\piorbO(P,p)$ the set of all non-trivial elements of $\piorb(P,p)$ that can be represented by simple contours.
An {\it $m$-Arf function\/} is a function
$$\arf:\piorbO(P,p)\to\z/m\z$$
satisfying the following conditions
\begin{enumerate}[1.]
\item
\label{arf-prop-conj}
$\arf(bab^{-1})=\arf(a)$ for any elements~$a,b\in\piorbO(P,p)$,
\item
\label{arf-prop-inv}
$\arf(a^{-1})=-\arf(a)$ for any element~$a\in\piorbO(P,p)$ that is not of order~$2$,
\item
\label{arf-prop-cross}
$\arf(ab)=\arf(a)+\arf(b)$
for any
elements~$a$ and~$b$ which can be represented by a pair of simple contours in $P$
intersecting in exactly one point~$p$ with $\<a,b\>\ne0$,
\item
\label{arf-prop-neg-seqset}
$\arf(ab)=\arf(a)+\arf(b)-1$
for any elements~$a,b\in\piorbO(P,p)$ such that the element~$ab$ is in~$\piorbO(P,p)$ 
and the elements~$a$ and~$b$ can be represented by a pair of simple contours in $P$
intersecting in exactly one point~$p$ with $\<a,b\>=0$
and placed in a neighbourhood of the point~$p$ as shown in Figure~\ref{fig-neg-pair}.
\item
For any elliptic element~$c$ of order~$p$ we have $p\cdot\arf(c)+1\equiv0\mod\,m$.
\end{enumerate}
\end{mydef}

The following property of $m$-Arf functions follows immediately
from Properties~\ref{arf-prop-neg-seqset} and~\ref{arf-prop-inv} in Definition~\ref{def-m-arf}:

\begin{prop}
\label{arf-prop-seqset}
Let $a$ and~$b$ be elements of~$\piorbO(P,p)$ such that the element~$ab$ is in~$\piorbO(P,p)$
and the elements~$a$ and~$b$ can be represented by a pair of simple contours in $P$
intersecting in exactly one point~$p$ with $\<a,b\>=0$
and placed in a neighbourhood of the point~$p$ as shown in Figure~\ref{fig-pos-pair}.
Then the equation~$\arf(ab)=\arf(a)+\arf(b)+1$ is satisfied if the element~$ab$ is not of order~$2$
and the equation~$\arf(ab)=-\arf(a)-\arf(b)-1$ is satisfied if the element~$ab$ is of order~$2$.
\end{prop}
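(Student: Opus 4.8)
The plan is to reduce the positive-pair rule (Figure~\ref{fig-pos-pair}) to the negative-pair rule of Property~\ref{arf-prop-neg-seqset} by reversing orientations. First I would record the purely geometric observation that the configuration in Figure~\ref{fig-pos-pair} is obtained from the configuration in Figure~\ref{fig-neg-pair} by reversing the orientation of every contour: reversing the orientation of the contour representing~$a$ yields the contour representing~$a^{-1}$, and likewise for~$b$. Consequently, if $a=c_1$ and $b=c_2$ form a positive pair at~$p$, then $b^{-1}$ and $a^{-1}$ form a negative pair at~$p$ in the sense of Figure~\ref{fig-neg-pair}, with $b^{-1}$ in the role of the first contour and $a^{-1}$ in the role of the second. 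Since $b^{-1}a^{-1}=(ab)^{-1}=c_3$ is again in~$\piorbO(P,p)$, Property~\ref{arf-prop-neg-seqset} applies to the pair $(b^{-1},a^{-1})$.

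Applying Property~\ref{arf-prop-neg-seqset} to this negative pair gives
\[
  \arf(b^{-1}a^{-1})=\arf(b^{-1})+\arf(a^{-1})-1.
\]
I would then relate $\arf(ab)$ to $\arf((ab)^{-1})=\arf(b^{-1}a^{-1})$ using Property~\ref{arf-prop-inv}, splitting into two cases according to the order of~$ab$. If $ab=c_3^{-1}$ is not of order~$2$, then $\arf(ab)=-\arf(b^{-1}a^{-1})$, and substituting $\arf(a^{-1})=-\arf(a)$ and $\arf(b^{-1})=-\arf(b)$ (again by Property~\ref{arf-prop-inv}, as $a$ and~$b$ are not of order~$2$) yields $\arf(ab)=\arf(a)+\arf(b)+1$. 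If $ab=c_3^{-1}$ is of order~$2$, then $ab=(ab)^{-1}$, so $\arf(ab)=\arf(b^{-1}a^{-1})=\arf(b^{-1})+\arf(a^{-1})-1=-\arf(a)-\arf(b)-1$, which is the second claimed identity.

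The algebra above is immediate once the reduction is set up; the only point requiring care is the geometric identification in the first step, namely that reversing all orientations in Figure~\ref{fig-pos-pair} produces exactly Figure~\ref{fig-neg-pair}, and in particular that this reversal interchanges the order of the two contours, so that the resulting negative pair is $(b^{-1},a^{-1})$ and has product $(ab)^{-1}$ rather than $a^{-1}b^{-1}$. This ordering is precisely what makes Property~\ref{arf-prop-inv} applicable and reproduces the same computation already carried out in the third bullet of the proof of Lemma~\ref{lem-hsi-rules}.
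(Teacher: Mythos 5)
Your proposal is a faithful expansion of the paper's own proof, which consists of the single remark that the proposition ``follows immediately from Properties~\ref{arf-prop-neg-seqset} and~\ref{arf-prop-inv} in Definition~\ref{def-m-arf}'': you reverse both contours, apply the negative-pair rule to get $\arf(b^{-1}a^{-1})=\arf(b^{-1})+\arf(a^{-1})-1$, and translate back with the inversion rule, splitting on the order of~$ab$; this is also the computation in the third bullet of the proof of Lemma~\ref{lem-hsi-rules}, run in the opposite direction. Concerning the one step you flag as delicate: being ``placed as in Figure~\ref{fig-neg-pair}'' depends only on the cyclic order of the four rays at~$p$, and that cyclic order is unchanged when the two contours are interchanged, so $(a^{-1},b^{-1})$ and $(b^{-1},a^{-1})$ both qualify as negative pairs; the genuine advantage of your choice $(b^{-1},a^{-1})$ is that its product is $(ab)^{-1}$, which lies in $\piorbO(P,p)$ by hypothesis, so Property~\ref{arf-prop-neg-seqset} applies without any appeal to conjugation invariance of $\piorbO(P,p)$ or of~$\arf$.

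There is, however, one unjustified step: the parenthetical claim that $a$ and~$b$ are not of order~$2$. This is not a hypothesis of the proposition and does not follow from the hypotheses. For instance, on an orbifold of signature $(0:2,3,11)$, which satisfies the liftability conditions of Proposition~\ref{prop-lift-exists} for $m=5$ and hence carries $5$-Arf functions, the pair $(c_1,c_2)$ from a standard basis fulfils every hypothesis of Proposition~\ref{arf-prop-seqset} (indeed the paper applies the proposition to exactly such pairs in the proof of Proposition~\ref{arf-prop-liftexist}), yet $c_1$ is elliptic of order~$2$. For such an element $a=c_1$ one has $a^{-1}=a$, hence $\arf(a^{-1})=\arf(a)$, and this differs from $-\arf(a)$ because the fifth condition of Definition~\ref{def-m-arf} forces $2\arf(a)\equiv-1\mod\,m$, which is nonzero once $m>1$; so your final substitution $\arf(a^{-1})=-\arf(a)$ breaks down, and the argument as written proves the proposition only when neither $a$ nor~$b$ has order~$2$. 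To be fair, the paper's one-line proof invokes Property~\ref{arf-prop-inv} for $a$ and~$b$ in exactly the same unguarded way, so this gap is inherited rather than introduced by you; but a complete treatment of the order-$2$ case cannot be extracted from Properties~\ref{arf-prop-neg-seqset} and~\ref{arf-prop-inv} alone and has to go back to lifts and Corollary~\ref{cor-product-dont-intersect}, whose proof places no restriction on the orders of the first two elements of the triple.
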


\begin{lem}
\label{c1c2-not-ell}
Let $\Gamma$ be a hyperbolic polygon group of signature $(0:p_1,\dots,p_r)$, $r>3$.
Let $c_1,\dots,c_r$ be a standard basis of~$\Gamma$.
Then the element~$c_1c_2$ is not elliptic.
\end{lem}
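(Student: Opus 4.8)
The plan is to translate the claim into a statement about simple closed curves on the orbifold $P=\hyp/\Gamma$, which is a sphere carrying $r$ cone points $x_1,\dots,x_r$ of orders $p_1,\dots,p_r$. Since $\{c_1,\dots,c_r\}$ is a standard basis, each generator $c_i$ is represented by a simple contour around the single marked point $x_i$, and the defining relation reads $c_1\cdots c_r=1$, so $c_1c_2=(c_3\cdots c_r)^{-1}$. Consequently $c_1c_2$ is represented by a simple closed contour $\gamma$ that bounds on one side a disc containing exactly $x_1,x_2$ and on the other side a disc containing $x_3,\dots,x_r$. This is precisely where the hypothesis $r>3$ is used: it forces each of the two complementary discs to contain at least two cone points. (For $r=3$ the curve $\gamma$ would encircle the single point $x_3$ and equal $c_3^{-1}$, which is elliptic, so the hypothesis cannot be dropped.)

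First I would invoke the standard description of torsion in a cocompact Fuchsian group (recalled in Section~\ref{fin-gen-Fuchs}, see also \cite{Zieschang:book}): any elliptic element of $\Gamma$ fixes a point of $\hyp$ lying over a cone point of $P$, its stabiliser being the finite cyclic group generated by a conjugate of the corresponding $c_i$. Thus the elliptic elements of $\Gamma$ are exactly the conjugates of powers of the generators $c_1,\dots,c_r$, and at the level of free homotopy this says that a non-trivial simple contour represents an elliptic element if and only if it bounds a disc containing exactly one cone point. Applying this to $\gamma$, whose two complementary discs each contain at least two of the $x_i$, we conclude that $\gamma$ is not freely homotopic to a loop around any single $x_i$; hence $c_1c_2$ is conjugate to no power of any $c_i$ and is therefore not elliptic. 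Since $P$ is compact with neither punctures nor holes, $\Gamma$ has no parabolic elements, so in fact $c_1c_2$ is hyperbolic.

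The part that carries the real content, and the step I would treat most carefully, is the classification of elliptic elements as conjugates of powers of the $c_i$ together with its reformulation ``encloses exactly one cone point''; both are standard but deserve an explicit pointer. As a self-contained alternative I would mention the direct computation: writing $c_1=\sigma_{\ell_1}\sigma_m$ and $c_2=\sigma_m\sigma_{\ell_2}$, where $m$ is the geodesic through the fixed points $x_1,x_2$ and $\ell_1,\ell_2$ are geodesics through $x_1,x_2$ respectively, one obtains $c_1c_2=\sigma_{\ell_1}\sigma_{\ell_2}$, which is hyperbolic exactly when $\ell_1$ and $\ell_2$ are ultraparallel. The drawback of this route is that confirming ultraparallelism requires controlling the mutual position of the fixed points inside the fundamental polygon, which is precisely the bookkeeping the topological argument avoids.
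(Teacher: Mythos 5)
Your main argument has a genuine gap, and it sits exactly at the step you yourself flag as carrying ``the real content''. The torsion classification (every elliptic element of $\Gamma$ is conjugate to a power of some $c_i$) is indeed standard, but it does \emph{not} say, ``at the level of free homotopy'', that a simple contour represents an elliptic element if and only if it bounds a disc containing exactly one cone point. Only the ``if'' direction is a reformulation; the ``only if'' direction is precisely (a strengthening of) the lemma being proved. Concretely: to exclude $c_1c_2=hc_i^kh^{-1}$ you argue that $\gamma$ is not freely homotopic to a loop around a single $x_i$; but a conjugate of $c_i^k$ is freely homotopic to the $k$-fold loop around $x_i$, which for $k\neq\pm1$ is not simple, so the shape of $\gamma$ yields no contradiction unless one already knows that an elliptic conjugacy class cannot contain a simple contour enclosing two cone points --- which is exactly the assertion at issue. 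Nor can soft invariants close this hole: for the hyperbolic signature $(0:3,3,5,2)$ one has $e_1+e_2-e_3=4(1,1,1,1)-3e_1-3e_2-5e_3-4e_4$, so $\bar c_1+\bar c_2=\bar c_3$ in the abelianization of $\Gamma$, i.e.\ homology and winding numbers are perfectly consistent with $c_1c_2$ being conjugate to $c_3$. Ruling that out genuinely requires hyperbolic geometry (geodesic representatives of essential simple curves, a Riemann--Hurwitz count for a disc bounded by a lift of $\gamma$, or the polygon argument below); it is not a consequence of the classification of torsion, so as written the proof is circular.

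Your ``self-contained alternative'' is in substance the paper's actual proof, but you stop precisely where the work lies. The paper writes $c_i=\sigma_i\sigma_{i+1}$, where $\sigma_1,\dots,\sigma_r$ are the reflections in the sides of the canonical fundamental polygon $\Pi$, so that $c_1c_2=(\sigma_1\sigma_2)(\sigma_2\sigma_3)=\sigma_1\sigma_3$, and then proves the ultraparallelism you defer: since $r>3$, the sides fixed by $\sigma_1$ and $\sigma_3$ are not adjacent; if their lines met, the region $Q$ cut off between these two lines and $\partial\Pi$ would be a hyperbolic polygon with one angle at the intersection point, two angles larger than $\pi/2$ (supplementary to angles of $\Pi$, each at most $\pi/2$), and all remaining angles larger than $\pi$, so its angle sum would exceed what is possible for a hyperbolic polygon --- a contradiction. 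This angle count (or an equivalent geometric input, properly cited) is the entire content of the lemma; supplying it is what your proposal is missing.
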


\begin{proof}
Let $\Pi$~be the canonical fundamental polygon for the group generated by the elements $c_1,\dots,c_n$
such that the generators~$c_i$ can be described as products $c_i=\si_i\si_{i+1}$
of reflexions~$\si_1,\dots,\si_n$ in the edges of the polygon~$\Pi$ (suitably numbered).
Then $c_1c_2=(\si_1\si_2)(\si_2\si_3)=\si_1\si_3$.
The product of two reflexions~$\si_1\si_2$ is an elliptic element
if and only if the axes of the reflexions intersect in~$\hyp$.
Since~$r>3$,
the sides of the polygon~$\Pi$ that correspond to the reflexions~$\si_1$ and~$\si_3$ are not next to each other.
Let us assume that the axes intersect and let~$Q$ be the hyperbolic polygon
enclosed between by the axes and the polygon~$\Pi$.
All angles of the polygon~$\Pi$ are acute.
One angle of the polygon~$Q$ is the angle between the intersecting axes,
two angles are larger than~$\pi/2$, all other angles of~$Q$ are larger than~$\pi$,
hence the sum of the angles of~$Q$ is larger that it should be for a hyperbolic polygon.
\end{proof}

\begin{prop}
\label{arf-prop-liftexist}
For any standard basis
$$v=\{a_1,b_1,\dots,a_g,b_g,c_{g+1},\dots,c_{g+l_h+l_p+l_e}\}$$
of $\piorb(P,p)$ we have
$$\sum\limits_{j=g+1}^n\,\arf(c_j)\equiv(2-2g)-(l_h+l_p+l_e)\mod\,m.$$
\end{prop}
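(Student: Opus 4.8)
The plan is to evaluate $\arf$ along the defining relation of the standard basis by a telescoping argument modelled on the proof of Lemma~\ref{lem-lift-glm}, but carried out entirely from the axioms in Definition~\ref{def-m-arf}. Set $e_{2i-1}=a_i$ and $e_{2i}=b_ia_i^{-1}b_i^{-1}$ for $i=1,\dots,g$, set $e_{2g+j}=c_{g+j}$ for $j=1,\dots,n-g$, and put $N=n+g$. By Definition~\ref{def-genus-seq-set} together with Theorem~\ref{thm2.1inN}, the tuple $(e_1,\dots,e_N)$ is a sequential set of signature $(0;2g+l_h,l_p,l_e:p_1,\dots,p_{l_e})$ with $e_1\cdots e_N=1$. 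Conjugation invariance (Property~\ref{arf-prop-conj}) and the inversion rule (Property~\ref{arf-prop-inv}, valid since each $a_i$ is hyperbolic and hence not of order~$2$) give
$$\arf(e_{2i-1})+\arf(e_{2i})=\arf(a_i)+\arf(a_i^{-1})=0,$$
so that $\sum\limits_{i=1}^N\arf(e_i)=\sum\limits_{j=g+1}^n\arf(c_j)$. As $2-N=(2-2g)-(n-g)$, it suffices to show that $\sum\limits_{i=1}^N\arf(e_i)\equiv2-N\mod\,m$.

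Next I would introduce the partial products $P_i=e_1\cdots e_i$ and observe that, for each $i$, the triple $(P_{i-1},e_i,P_i^{-1})=(e_1\cdots e_{i-1},\,e_i,\,e_{i+1}\cdots e_N)$ is a genus-$0$ sequential triple. In a sequential triple the first two contours meet only at $p$ with vanishing intersection number and are placed as the positive pair of Figure~\ref{fig-pos-pair}, so that Proposition~\ref{arf-prop-seqset}---the Arf-function counterpart of Corollary~\ref{cor-product-dont-intersect}---applies to $c_1=P_{i-1}$, $c_2=e_i$, $c_3=P_i^{-1}$, the order-$2$ alternative there concerning the product $c_1c_2=P_i$. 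For the intermediate indices $2\le i\le N-2$ the contour $P_i$ separates off at least two and at most $N-2$ of the boundary features, hence is hyperbolic and in particular not of order~$2$; this is exactly Lemma~\ref{c1c2-not-ell} together with its evident extension to products of more than two consecutive generators. Proposition~\ref{arf-prop-seqset} then yields the rule
$$\arf(P_i)=\arf(P_{i-1})+\arf(e_i)+1\qquad(2\le i\le N-2),$$
which telescopes, using $P_1=e_1$, to
$$\arf(P_{N-2})=\sum\limits_{i=1}^{N-2}\arf(e_i)+(N-3).$$

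Finally I would treat the last triple $(P_{N-2},e_{N-1},e_N)$, whose product is the identity. Here $e_N=c_n$ may be an elliptic element of order~$2$, so I would invoke \emph{both} cases of Proposition~\ref{arf-prop-seqset}: if $e_N$ is not of order~$2$ then $P_{N-1}=e_N^{-1}$ and $\arf(P_{N-1})=-\arf(e_N)$, whereas if $e_N$ is of order~$2$ then $P_{N-1}=e_N^{-1}=e_N$ and the sign change in Proposition~\ref{arf-prop-seqset} exactly compensates for the failure of the inversion rule; in either case one arrives at the single identity
$$-\arf(e_N)=\arf(P_{N-2})+\arf(e_{N-1})+1.$$
Substituting the telescoped value of $\arf(P_{N-2})$ gives $\sum\limits_{i=1}^N\arf(e_i)=2-N$, as needed (for $N=3$ the telescoping sum is empty and the claim reduces to the last-triple identity alone). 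The main obstacle is the geometric input used in the second paragraph: one must guarantee that every intermediate partial product $P_i$ is hyperbolic, so that the plain $+1$-rule rather than its order-$2$ variant governs the entire telescoping; this is the purpose of Lemma~\ref{c1c2-not-ell}, and it is precisely the endpoint $i=N-1$ escaping this control that forces the separate, sign-robust treatment of the final triple.
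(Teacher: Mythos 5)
Your proposal is correct and is essentially the paper's own argument: the same reduction from genus $g$ to genus $0$ via $\arf(a_i)+\arf(b_ia_i^{-1}b_i^{-1})=0$, followed by the same genus-$0$ computation, which the paper merely packages as an induction on the number of generators (collapsing $c_1,c_2$ into $c_1c_2$ at each step, with the possibly order-$2$ element confined to the base case $r=3$) rather than as your unrolled telescoping over the partial products $P_i$, and which rests on exactly the same ingredients: Proposition~\ref{arf-prop-seqset}, the inversion property, and Lemma~\ref{c1c2-not-ell} together with its extension to longer partial products (an extension the paper's recursion also needs implicitly, since after one collapse the lemma must be applied to the new basis whose first entry is $c_1c_2$). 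The difference is purely presentational, so nothing further is required.
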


\begin{proof}
We discuss the case $g=0$ first, and then we reduce the general case to the case $g=0$.
\begin{enumerate}[$\bullet$]
\item
Let $g=0$.
We prove that the statement is true for lifts of sequential sets of signature $(0:p_1,\dots,p_r)$ by induction on~$r$.

\bigskip
In the case $r=3$ Proposition~\ref{arf-prop-seqset} implies
$$\arf(c_1c_2)=\arf(c_1)+\arf(c_2)+1$$
if the element~$c_1c_2=c_3^{-1}$ is not of order~$2$ and
$$\arf(c_1c_2)=-\arf(c_1)-\arf(c_2)-1$$
if the element~$c_1c_2=c_3^{-1}$ is of order~$2$.
If the element~$c_3$ is not of order~$2$, then Property~\ref{arf-prop-inv} implies
$$\arf(c_1c_2)=\arf(c_3^{-1})=-\arf(c_3).$$
Combining~$\arf(c_1c_2)=\arf(c_1)+\arf(c_2)+1$ and~$\arf(c_1c_2)=-\arf(c_3)$, we obtain
$$\arf(c_1)+\arf(c_2)+\arf(c_3)=-1.$$
If the element~$c_3$ is of order~$2$, then
$$\arf(c_1c_2)=\arf(c_3^{-1})=\arf(c_3).$$
Combining~$\arf(c_1c_2)=-\arf(c_1)-\arf(c_2)-1$ and~$\arf(c_1c_2)=\arf(c_3)$, we obtain
$$\arf(c_1)+\arf(c_2)+\arf(c_3)=-1.$$

\bigskip
Assume that the statement is true for $r\le k-1$ and consider the case $r=k$.
By our assumption
$$\arf(c_1\cdot c_2)+\arf(c_3)+\cdots+\arf(c_k)=2-(k-1)=(2-k)+1.$$
Moreover, according to Lemma~\ref{c1c2-not-ell} the element~$c_1c_2$ cannot be of order~$2$.
Hence by Proposition~\ref{arf-prop-seqset} we have $\arf(c_1c_2)=\arf(c_1)+\arf(c_2)+1$.
The last two equations imply that $\arf(c_1)+\cdots+\arf(c_k)=2-k$.
\item
We now consider the general case.
The set
$$(a_1,b_1a_1^{-1}b_1^{-1},\dots,a_g,b_ga_g^{-1}b_g^{-1},c_{g+1},\dots,c_{g+r})$$
is a standard basis of an orbifold of signature $(0:2g+l_h,l_p,l_e:p_1,\dots,p_{l_e})$, hence
\begin{align*}
  &\sum\limits_{i=1}^g\,(\arf(a_i)+\arf(b_ia_i^{-1}b_i^{-1}))+\sum\limits_{i=g+1}^{g+l_h+l_p+l_e}\arf(c_i)\\
  &=2-(2g+l_h+l_p+l_e)
  =(2-2g)-(l_h+l_p+l_e).
\end{align*}
From Properties~\ref{arf-prop-conj} and~\ref{arf-prop-inv} of $m$-Arf functions we obtain that
$\arf(b_i a_i^{-1}b_i^{-1})=\arf(a_i^{-1})=-\arf(a_i)$ and hence $\arf(a_i)+\arf(b_i a_i^{-1}b_i^{-1})=0$.
\end{enumerate}
\end{proof}

\begin{mydef}
\label{def-si-to-lats}
Let $\hsi_{\lats}:\piorb(P,p)\to\z/m\z$ be the function associated to a lift $\lats$ as in definition~\ref{def-hsi-to-lats},
then the function $\si_{\lats}=\hsi_{\lats}|_{\piorbO(P,p)}$
is an $m$-Arf function according to
Lemma~\ref{lem-hsi-rules}, \ref{lem-inv}, and~\ref{lem-conj}.
We call the function $\si_{\lats}$ the {\it $m$-Arf function associated to the lift $\lats$.}
\end{mydef}

\subsection{Higher Arf functions and autohomeomorphisms of orbifolds}

\label{arf-propert}

\myskip
Let $\lat$ be a Fuchsian group of signature~$(g:p_1,\dots,p_r)$ and $P=\hyp/\lat$ the corresponding orbifold.
Let $p\in P$.
Let $\Psi:\hyp\to P$ be the natural projection.
Choose $q\in\Psi^{-1}(p)$ and let $\Phi:\lat\to\piorbO(P,p)$ be the induced isomorphism.
Let $\lats$ be a lift of $\lat$ in $\Gm$.

\myskip
Consider the following transformations of a standard basis:
$$v=\{a_1,b_1,\dots,a_g,b_g,c_{g+1},\dots,c_n\}$$
of $\piorbO(P,p)$ to another standard basis
$$v'=\{a_1',b_1',\dots,a_g',b_g',c_{g+1}'\dots,c_n'\}:$$
\begin{align*}
1.\quad &a_1'=a_1b_1.\\
2.\quad &a_1'=(a_1a_2)a_1(a_1a_2)^{-1},\\
        &b_1'=(a_1a_2)a_1^{-1}a_2^{-1}b_1(a_1a_2)^{-1},\\
        &a_2'=a_1a_2a_1^{-1},\\
        &b_2'=b_2a_2^{-1}a_1^{-1}.\\
3.\quad &a_g'=(b_g^{-1}c_{g+1})b_g^{-1}(b_g^{-1}c_{g+1})^{-1},\\
        &b_g'=(b_g^{-1}c_{g+1}b_g)c_{g+1}^{-1}b_ga_gb_g^{-1}(b_g^{-1}c_{g+1}b_g)^{-1},\\
        &c_{g+1}'=b_g^{-1}c_{g+1}b_g.\\
4.\quad &a_k'=a_{k+1},\quad b_k'=b_{k+1},\\
        &a_{k+1}'=(c_{k+1}^{-1}c_k)a_k(c_{k+1}^{-1}c_k)^{-1},\\
        &b_{k+1}'=(c_{k+1}^{-1}c_k)b_k(c_{k+1}^{-1}c_k)^{-1}.\\
5.\quad &c_k'=c_{k+1},\quad c_{k+1}'=c_{k+1}^{-1}c_kc_{k+1}.
\end{align*}
Here $c_i=[a_i,b_i]$ for $i=1,\dots,g$,
in~4 we consider $k\in\{1,\dots,g\}$,
in~5 we consider $k\in\{g+1,\dots,n\}$ such that~$\ord(c_k)=\ord(c_{k+1})$.
If $a_i'$, $b_i'$ resp.\ $c_i'$ is not described explicitly, this means $a_i'=a_i$, $b_i'=b_i$ resp.\ $c_i'=c_i$.

\bigskip
We will call these transformations generalised Dehn twists.
Each generalised Dehn twists induces a homotopy class of autohomeomorphisms of the orbifold~$P$,
which maps elliptic fixed points to elliptic fixed points of the same order.
The group of all homotopy classes of autohomeomorphisms of the orbifold~$P$
is generated by the homotopy classes of generalised Dehn twists as described above (compare~\cite{Zieschang:1973}).

\bigskip
Now we will compute the values of an Arf function~$\arf$ on the standard basis~$v'$ from the values of $\arf$ on the standard basis~$v$ for each of the generalised Dehn twists described above.

\begin{lem}
\label{lem-dehn}
Let $\arf:\piorbO(P,p)\to\z/m\z$ be an $m$-Arf function.
Let $D$ be a generalised Dehn twist of the type described above.
Suppose that $D$ maps the standard basis
$$v=\{a_1,b_1,\dots,a_g,b_g,c_{g+1},\dots,c_n\}$$
into the standard basis
$$v'=D(v)=\{a_1',b_1',\dots,a_g',b_g',c_{g+1}',\dots,c_n'\}.$$
Let $\al_i,\be_i,\ga_i$ resp.\ $\al'_i,\be'_i,\ga'_i$ be the values of $\arf$ on the elements of $v$ resp.\ $v'$.
Then for the Dehn twists of types~1--5 we obtain
\begin{align*}
1.\quad &\al_1'=\al_1+\be_1.\\
2.\quad &\be_1'=\be_1-\al_1-\al_2-1,\quad \be_2'=\be_2-\al_2-\al_1-1.\\
3.\quad &\al_g'=-\be_g,\quad \be_g'=\al_g-\ga_{g+1}-1.\\
4.\quad &\al_k'=\al_{k+1},\quad \be_k'=\be_{k+1},\quad \al_{k+1}'=\al_k,\quad \be_{k+1}'=\be_k.\\
5.\quad &\ga_k'=\ga_{k+1},\quad \ga_{k+1}'=\ga_k.
\end{align*}
\end{lem}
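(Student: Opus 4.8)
The strategy is to compute the value of the $m$-Arf function $\arf$ on each generator of the transformed standard basis $v'$ directly from the definitions of the generalised Dehn twists, using the five properties of $m$-Arf functions (Definition~\ref{def-m-arf}), Proposition~\ref{arf-prop-seqset} and the conjugation invariance (Property~\ref{arf-prop-conj}). The key observation throughout is that each new generator $a_i'$, $b_i'$, $c_i'$ is expressed either as a conjugate of an old generator (so its value is unchanged by Property~\ref{arf-prop-conj}), or as a product of old generators arising from a pair of intersecting simple contours, so that one of the product rules (Property~\ref{arf-prop-cross}, Property~\ref{arf-prop-neg-seqset}, or Proposition~\ref{arf-prop-seqset}) applies. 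I would treat the five types of twist one at a time.

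For type~1 the transformation $a_1'=a_1 b_1$ is the product of the two curves $a_1$ and $b_1$, which intersect transversally at $p$ with $\<a_1,b_1\>\ne0$; thus Property~\ref{arf-prop-cross} gives immediately $\al_1'=\al_1+\be_1$. Types~4 and~5 are the easiest: every new generator is either an old generator or a conjugate of one ($a_{k+1}'=(c_{k+1}^{-1}c_k)a_k(c_{k+1}^{-1}c_k)^{-1}$, and similarly for $b_{k+1}'$ and $c_{k+1}'$), so conjugation invariance yields the stated permutations of values $\al_k'=\al_{k+1}$, etc. These cases I would dispatch quickly. The substance lies in types~2 and~3, where the new generators are genuine products of non-commuting simple contours whose homotopy classes sit in the standard configuration, so that the $-1$ or $+1$ correction terms from Properties~\ref{arf-prop-neg-seqset} and Proposition~\ref{arf-prop-seqset} enter.

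For type~2 I would first use conjugation invariance to reduce $a_1', a_2', b_1'$ modulo conjugation, then expand $b_1'$ in terms of $b_1$ and the $a_i$. The expression $b_1'=(a_1a_2)a_1^{-1}a_2^{-1}b_1(a_1a_2)^{-1}$ is conjugate to $a_1^{-1}a_2^{-1}b_1$, and I would break this into products of pairs of simple contours meeting at $p$, applying Property~\ref{arf-prop-cross} or Property~\ref{arf-prop-neg-seqset} as dictated by the intersection pairing and the local picture in Figure~\ref{fig-neg-pair}, to land on $\be_1'=\be_1-\al_1-\al_2-1$ (and symmetrically for $\be_2'$). For type~3 the same method applies: $a_g'=(b_g^{-1}c_{g+1})b_g^{-1}(b_g^{-1}c_{g+1})^{-1}$ is conjugate to $b_g^{-1}$, giving $\al_g'=\arf(b_g^{-1})=-\be_g$ by Property~\ref{arf-prop-inv}, while the more intricate $b_g'$ I would reduce modulo conjugation and expand stepwise, tracking the sign contributions to reach $\be_g'=\al_g-\ga_{g+1}-1$.

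The main obstacle will be bookkeeping in types~2 and~3: at each stage one must verify that the intermediate pairs of curves really do intersect in a single point and are placed in the local configuration required by the relevant product rule (Figure~\ref{fig-neg-pair} versus Figure~\ref{fig-pos-pair}), since the product rules are sensitive to both the intersection number and the handedness of the crossing, and applying the wrong rule changes the sign of the correction term. I expect this verification of the geometric hypotheses — rather than the algebra itself — to be the delicate part, and it is where I would refer back to the canonical configuration of Figure~\ref{fig-basis} and to Theorem~\ref{thm2.1inN} to confirm that the relevant triples form sequential sets of the appropriate signature.
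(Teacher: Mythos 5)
Your plan matches the paper's proof essentially step for step: type~1 via Property~\ref{arf-prop-cross}, types~4 and~5 via conjugation invariance alone, and types~2 and~3 by first reducing modulo conjugation (to $a_2^{-1}b_1a_1^{-1}$, resp.\ $c_{g+1}^{-1}b_ga_gb_g^{-1}$) and then applying the non-crossing product rule of Figure~\ref{fig-neg-pair} together with Properties~\ref{arf-prop-cross}, \ref{arf-prop-inv} and~\ref{arf-prop-conj}. The geometric verification you correctly identify as the delicate part is exactly what the paper supplies by drawing the axes of the corresponding elements of the Fuchsian group (Figures~\ref{fig-axes-1} and~\ref{fig-axes-2}), so your route is the same as the paper's.
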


\begin{proof}
We assume that the Dehn twist $D$ belongs to one of the types described in the definition above.
In the following computations we illustrate the position of the contours on the surface
with figures showing the position of the axes of the corresponding elements in $\lat$.
Let
$$\{A_1,B_1,\dots,A_g,B_g,C_{g+1},\dots,C_n\}$$
be the sequential set corresponding to the standard basis~$v$.
In the first case according to Property~\ref{arf-prop-cross} of $m$-Arf functions
we obtain
$$\arf(a_1')=\arf(a_1b_1)=\arf(a_1)+\arf(b_1).$$

\myskip\noindent
In the second case according to Property~\ref{arf-prop-conj} we obtain
\begin{align*}
  \arf(a_1')&=\arf((a_1a_2)a_1(a_1a_2)^{-1})=\arf(a_1),\\
  \arf(b_1')&=\arf((a_1a_2)a_1^{-1}a_2^{-1}b_1(a_1a_2)^{-1})=\arf(a_1^{-1}a_2^{-1}b_1)\\
            &=\arf(a_1(a_1^{-1}a_2^{-1}b_1)a_1^{-1})=\arf(a_2^{-1}b_1a_1^{-1}).
\end{align*}

\begin{figure}
  \begin{center}
    \forcehmode
      \bgroup
        \beginpicture
          \setcoordinatesystem units <20 bp,20 bp>
          \multiput {\phantom{$\bullet$}} at -4 -1 10 3 /
          \circulararc 180 degrees from 1 0 center at -1 0
          \arrow <7pt> [0.2,0.5] from 0.995 0.04 to 1 0
          \put {$A_1^{-1}$} [b] <0pt,\baselineskip> at -1 2
          \circulararc 180 degrees from 4 0 center at 2 0
          \arrow <7pt> [0.2,0.5] from 3.995 0.04 to 4 0
          \put {$B_1$} [b] <0pt,\baselineskip> at 2 2
          \circulararc 180 degrees from 2.5 0 center at 0.5 0
          \arrow <7pt> [0.2,0.5] from 2.495 0.04 to 2.5 0
          \put {$B_1A_1^{-1}$} [b] <0pt,\baselineskip> at 0.5 2
          \circulararc 180 degrees from 9 0 center at 7 0
          \arrow <7pt> [0.2,0.5] from 8.995 0.04 to 9 0
          \put {$A_2^{-1}$} [b] <0pt,\baselineskip> at 7 2
          \plot -4 0 10 0 /
        \endpicture
      \egroup
  \end{center}
  \caption{Axes of $B_1A_1^{-1}$ and $A_2^{-1}$}
  \label{fig-axes-1}
\end{figure}

\noindent
The mutual position of the axes of the elements~$A_2^{-1}$ and~$B_1A_1^{-1}$ is as in Figure~\ref{fig-axes-1},
hence Property~\ref{arf-prop-neg-seqset} implies
$$
  \arf(b_1')
  =\arf(a_2^{-1}\cdot(b_1a_1^{-1}))
  =\arf(a_2^{-1})+\arf(b_1a_1^{-1})-1.
$$
According to Property~\ref{arf-prop-cross} we have $\arf(b_1a_1^{-1})=\arf(b_1)+\arf(a_1^{-1})$.
Thus using Property~\ref{arf-prop-inv} we obtain
$$\arf(b_1')=\arf(a_2^{-1})+\arf(b_1)+\arf(a_1^{-1})-1=\arf(b_1)-\arf(a_1)-\arf(a_2)-1.$$
Similarly we show that $\arf(a_2')=\arf(a_2)$ and $\arf(b_2')=\arf(b_2)-\arf(a_2)-\arf(a_1)-1$.

\myskip\noindent
In the third case we obtain according to Properties~\ref{arf-prop-inv} and~\ref{arf-prop-conj}
\begin{align*}
  \arf(a_g')&=\arf((b_g^{-1}c_{g+1})b_g^{-1}(b_g^{-1}c_{g+1})^{-1})=\arf(b_g^{-1})=-\arf(b_g),\\
  \arf(b_g')&=\arf((b_g^{-1}c_{g+1}b_g)c_{g+1}^{-1}b_ga_gb_g^{-1}(b_g^{-1}c_{g+1}b_g)^{-1})=\arf(c_{g+1}^{-1}b_ga_gb_g^{-1}),\\
  \arf(c_{g+1}')&=\arf(b_g^{-1}c_{g+1}b_g)=\arf(c_{g+1}).
\end{align*}

\begin{figure}
  \begin{center}
    \forcehmode
      \bgroup
        \beginpicture
          \setcoordinatesystem units <15 bp,15 bp>
          \multiput {\phantom{$\bullet$}} at -10 -1 8 3 /
          \circulararc 180 degrees from -5 0 center at -7 0
          \arrow <7pt> [0.2,0.5] from -8.995 0.04 to -9 0
          \put {$A_g$} [b] <0pt,\baselineskip> at -7 2
          \circulararc 180 degrees from -2 0 center at -4 0
          \arrow <7pt> [0.2,0.5] from -2.005 0.04 to -2 0
          \put {$B_g$} [b] <0pt,\baselineskip> at -4 2
          \circulararc 180 degrees from 1 0 center at -1 0
          \arrow <7pt> [0.2,0.5] from 0.995 0.04 to 1 0
          \put {$B_gA_gB_g^{-1}$} [b] <0pt,\baselineskip> at -1 2
          \circulararc 180 degrees from 7 0 center at 5 0
          \arrow <7pt> [0.2,0.5] from 6.995 0.04 to 7 0
          \put {$C_{g+1}^{-1}$} [b] <0pt,\baselineskip> at 5 2
          \plot -10 0 8 0 /
        \endpicture
      \egroup
  \end{center}
  \caption{Axes of $C_{g+1}^{-1}$ and $B_gA_gB_g^{-1}$}
  \label{fig-axes-2}
\end{figure}

\noindent
The mutual position of the axes of the elements $C_{g+1}^{-1}$ and $B_gA_gB_g^{-1}$ is as in Figure~\ref{fig-axes-2}.
According to Properties~\ref{arf-prop-neg-seqset} and~\ref{arf-prop-conj} we obtain
$$
  \arf(b_g')
  =\arf(c_{g+1}^{-1}\cdot(b_ga_gb_g^{-1}))
  =\arf(c_{g+1}^{-1})+\arf(b_ga_gb_g^{-1})-1
  =\arf(c_{g+1}^{-1})+\arf(a_g)-1.
$$

\myskip\noindent
In the forth and fifth case computations are easy, we only use Property~\ref{arf-prop-conj} of $m$-Arf functions.
\end{proof}

\subsection{Correspondence between higher Arf functions and hyperbolic Gorenstein automorphy factors}

\myskip
Let $\lat$ be a Fuchsian group of signature~$(g:p_1,\dots,p_r)$ and $P=\hyp/\lat$ the corresponding orbifold.
Let $p\in P$.
Let $\Psi:\hyp\to P$ be the natural projection.
Choose $q\in\Psi^{-1}(p)$ and let $\Phi:\lat\to\piorbO(P,p)$ be the induced isomorphism.

\begin{lem}
\label{lem-arf-difference}
The difference $\arf_1-\arf_2:\piorbO(P,p)\to\z/m\z$ of two Arf functions~$\arf_1$ and~$\arf_2$
induces a linear function $\ell:H_1(P;\z/m\z)\to\z/m\z$.
\end{lem}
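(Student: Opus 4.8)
The plan is to write $\tau=\arf_1-\arf_2$ and to exploit the fact that the additive constants $\pm1$ appearing in the defining relations of an $m$-Arf function are the same for $\arf_1$ and $\arf_2$, hence cancel in $\tau$. From Properties~\ref{arf-prop-conj} and~\ref{arf-prop-inv} of Definition~\ref{def-m-arf} I immediately get $\tau(bab^{-1})=\tau(a)$ and $\tau(a^{-1})=-\tau(a)$ for $a$ not of order~$2$. The main point is that $\tau$ is \emph{additive}: whenever $a,b\in\piorbO(P,p)$ are represented by simple contours meeting only at~$p$ and $ab\in\piorbO(P,p)$, I claim $\tau(ab)=\tau(a)+\tau(b)$. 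For $\<a,b\>\ne0$ this is Property~\ref{arf-prop-cross} with no constant; for the negative configuration it is Property~\ref{arf-prop-neg-seqset}, whose constant $-1$ cancels; for the positive configuration it is Proposition~\ref{arf-prop-seqset}, whose constant $+1$ cancels when $ab$ is not of order~$2$. In the one remaining subcase, where $ab$ has order~$2$, Proposition~\ref{arf-prop-seqset} gives $\tau(ab)=-\tau(a)-\tau(b)$, but the last condition of Definition~\ref{def-m-arf} applied with $p=2$ forces $2\tau(ab)=0$, and the two relations together again yield $\tau(ab)=\tau(a)+\tau(b)$.

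Next I would construct the functional $\ell$. Fix a standard basis $v=\{a_1,b_1,\dots,a_g,b_g,c_{g+1},\dots,c_n\}$ of $\piorb(P,p)$ and define $\ell$ on the homology classes of its members by $\ell([a_i])=\tau(a_i)$, $\ell([b_i])=\tau(b_i)$, $\ell([c_j])=\tau(c_j)$, extended $\z/m\z$-linearly. By the universal coefficient theorem $H_1(P;\z/m\z)$ is the abelianisation of $\piorb(P,p)$ reduced modulo~$m$, so it is generated by these classes subject only to the torsion relations $p_i[c_{g+i}]=0$ and the abelianised surface relation $\sum_{j=g+1}^n[c_j]=0$. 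Subtracting the two instances of the last condition of Definition~\ref{def-m-arf} gives $p_i\tau(c_{g+i})=0$, and subtracting the two instances of Proposition~\ref{arf-prop-liftexist} gives $\sum_{j=g+1}^n\tau(c_j)=0$; hence $\ell$ respects both families of relations and is a well-defined linear map $\ell\colon H_1(P;\z/m\z)\to\z/m\z$.

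It remains to prove $\tau(x)=\ell([x])$ for \emph{every} $x\in\piorbO(P,p)$, and this is the step I expect to be the main obstacle, because $\tau$ is defined only on simple contours and so cannot simply be extended to a homomorphism of $\piorb(P,p)$ and then factored through homology. Here I would use the generalised Dehn twists of Section~\ref{arf-propert}: any standard basis is obtained from $v$ by a sequence of such twists, and Lemma~\ref{lem-dehn} records the effect of each twist on the $\arf$-values. Passing to the difference $\tau$ deletes precisely the additive constants in those formulas, so that the resulting transformation of the values $\tau(a_i),\tau(b_i),\tau(c_j)$ matches exactly the transformation of the homology classes $[a_i],[b_i],[c_j]$ under the induced automorphism of $H_1(P;\z/m\z)$; for instance the first twist $a_1'=a_1b_1$ gives $\tau(a_1')=\tau(a_1)+\tau(b_1)$, in agreement with $[a_1']=[a_1]+[b_1]$. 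Consequently the identity $\tau=\ell\circ[\,\cdot\,]$, which holds on $v$ by construction, is preserved under every change of standard basis, so it holds on each member of every standard basis. Combined with the additivity established in the first step, which expresses the value of $\tau$ on an arbitrary simple contour as the $\z/m\z$-combination of its values on such generators dictated by the homology class, this gives $\tau=\ell\circ[\,\cdot\,]$ on all of $\piorbO(P,p)$, so $\tau$ is induced by the linear functional $\ell$.
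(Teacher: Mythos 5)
Your proposal is correct and is essentially the paper's own argument: the paper's proof is a brief sketch that defers to the torsion-free case (Lemma~4.5 in~\cite{NP1}) and isolates as its ``main observation'' precisely your key step, namely that by Lemma~\ref{lem-dehn} the generalised Dehn twists act by affine-linear maps on the tuples of values of an $m$-Arf function on a standard basis, hence by linear maps on the tuples of differences, and these linear maps match the induced action on homology classes. If anything, your write-up supplies more detail than the paper itself (the cancellation of the constants~$\pm1$, the order-$2$ case handled via the elliptic condition with $p=2$, and the well-definedness of $\ell$ on the mod-$m$ abelianisation), leaving only the final extension to arbitrary --- in particular separating --- simple contours at the same level of sketchiness as the original, which resolves it by citation.
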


\begin{proof}
The proof is analogous to the proof of the corresponding statement
for higher Arf functions on Fuchsian groups without torsion (see Lemma~4.5 in~\cite{NP1}).
The main observation is the fact that according to Lemma~\ref{lem-dehn}
the action of the generalised Dehn twists on the tuples of values of a higher Arf function
on elements of a standard basis are by affine-linear maps,
therefore the action on the tuples of differences of values of two higher Arf functions is by linear maps.
\end{proof}

\begin{cor}
\label{cor-arf-affine}
The set $\Arf^{P,m}$ of all $m$-Arf functions on $\piorbO(P,p)$ has a structure of an affine space,
\ie the set $\{\arf-\arf_0\st\arf\in\Arf^{P,m}\}$ is a free module over~$\z/m\z$ for any $\arf_0\in\Arf^{P,m}$.
\end{cor}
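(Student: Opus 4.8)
The plan is to read off the affine structure directly from Lemma~\ref{lem-arf-difference}, which already identifies the difference of any two $m$-Arf functions with a linear function on $H_1(P;\z/m\z)$. Assuming $\Arf^{P,m}\ne\emptyset$ (otherwise there is nothing to prove), I would fix a base point $\arf_0\in\Arf^{P,m}$ and analyse the difference map
$$
  \arf\longmapsto\ell_{\arf},\qquad \arf(a)-\arf_0(a)=\ell_{\arf}([a])\ \text{for all}\ a\in\piorbO(P,p),
$$
where $\ell_{\arf}\in\operatorname{Hom}(H_1(P;\z/m\z),\z/m\z)$ is the linear function supplied by Lemma~\ref{lem-arf-difference} and $[a]$ denotes the homology class. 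The goal is to show that the set $\{\arf-\arf_0\}$ coincides, under the identification $\ell\mapsto(a\mapsto\ell([a]))$, with the full dual module $\operatorname{Hom}(H_1(P;\z/m\z),\z/m\z)$, and that this dual is a free $\z/m\z$-module; the corollary is then immediate.

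First I would compute $H_1(P;\z/m\z)$ from the standard presentation. For a standard basis $\{a_1,b_1,\dots,a_g,b_g,c_{g+1},\dots,c_{g+r}\}$ the elliptic generators satisfy $c_{g+i}^{p_i}=1$, and by the liftability conditions (Property~5 in Definition~\ref{def-m-arf} and Proposition~\ref{prop-lift-exists}) we have $\gcd(p_i,m)=1$. Hence each class of $c_{g+i}$ is killed in homology with $\z/m\z$ coefficients, the relation $\prod[a_i,b_i]\prod c_{g+i}=1$ becomes vacuous, and $H_1(P;\z/m\z)$ is freely generated by the classes of $a_1,b_1,\dots,a_g,b_g$. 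Thus $H_1(P;\z/m\z)\cong(\z/m\z)^{2g}$ is free of rank $2g$, and so is its dual $\operatorname{Hom}(H_1(P;\z/m\z),\z/m\z)\cong(\z/m\z)^{2g}$.

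It then remains to identify the set of differences with this whole dual module, which I would accomplish by counting. The assignment $\arf\mapsto\ell_{\arf}$ is injective: from the displayed relation $\arf$ is recovered as $\arf_0+\ell_{\arf}([\,\cdot\,])$, and conversely, since the standard generators lie in $\piorbO(P,p)$ and their classes span $H_1(P;\z/m\z)$, the function $\ell_{\arf}$ is recovered from the difference. On the other hand, $\Arf^{P,m}$ is in bijection with the lifts of $\lat$ into $\Gm$ (Definition~\ref{def-si-to-lats}), and by Proposition~\ref{prop-lift-exists} there are exactly $m^{2g}$ such lifts, while $|\operatorname{Hom}(H_1(P;\z/m\z),\z/m\z)|=m^{2g}$ as well. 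An injection between two finite sets of equal cardinality is a bijection, so $\{\arf-\arf_0\}$ fills out the free module $\operatorname{Hom}(H_1(P;\z/m\z),\z/m\z)$, proving the claim.

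I expect the genuine obstacle to be the surjectivity step, i.e.\ that \emph{every} linear function on $H_1(P;\z/m\z)$ really arises as a difference of two $m$-Arf functions; verifying by hand that $\arf_0+\ell$ satisfies all the Arf axioms for each linear $\ell$ would be tedious and is exactly what the affine statement asserts. The counting argument sidesteps this by matching the $m^{2g}$ Arf functions with the $m^{2g}$ elements of the dual, but it leans decisively on the exact lift count of Proposition~\ref{prop-lift-exists} and on the vanishing of the elliptic classes in $H_1(P;\z/m\z)$, which in turn rests on the coprimality $\gcd(p_i,m)=1$.
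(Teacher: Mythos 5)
Your proof is correct in substance, and it supplies an argument where the paper gives essentially none: the paper states this corollary as an immediate consequence of Lemma~\ref{lem-arf-difference}, but that lemma only places the differences $\arf-\arf_0$ inside the free module $\operatorname{Hom}(H_1(P;\z/m\z),\z/m\z)$, and a subset of a free module need not even be a submodule, let alone free. The missing content is exactly your surjectivity step, and your counting argument --- $H_1(P;\z/m\z)\cong(\z/m\z)^{2g}$ because $\gcd(p_i,m)=1$ kills the elliptic classes, the difference map is injective, and there are $m^{2g}$ Arf functions to exhaust the $m^{2g}$-element dual module --- is the same mechanism that the paper only assembles later (Proposition~\ref{prop-lift-exists}, Theorem~\ref{thm-corresp}, Corollary~\ref{cor-arf-function-exists}). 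So your route makes explicit, at the right place, what the paper defers.

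The one blemish is the justification of the cardinality $|\Arf^{P,m}|=m^{2g}$. Definition~\ref{def-si-to-lats} does not give a bijection between lifts and $m$-Arf functions; it only defines the map $\lats\mapsto\si_{\lats}$ from lifts to Arf functions. The bijection --- in particular the nontrivial claim that \emph{every} $m$-Arf function arises from a lift --- is Theorem~\ref{thm-corresp}, which comes after this corollary in the paper's development (invoking it would not be circular, since its proof uses only Lemma~\ref{lem-arf-difference}, but it is a forward reference you should acknowledge rather than attribute to a definition). Fortunately your argument needs only the lower bound $|\Arf^{P,m}|\ge m^{2g}$, which is available at this point: by Proposition~\ref{prop-lift-exists} there are $m^{2g}$ distinct lifts, parametrized by the levels of the lifts $A_i,B_i$ of $\bA_i,\bB_i$, and the associated Arf functions of Definition~\ref{def-si-to-lats} take exactly these levels as their values on $a_i,b_i$, hence are pairwise distinct. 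Together with your injection of $\Arf^{P,m}$ into the $m^{2g}$-element dual module this forces equality, and the rest of your argument goes through verbatim.
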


\begin{cor}
\label{cor-arf-def-by-gen}
An $m$-Arf function is uniquely determined by its values on the elements of some standard basis of $\piorbO(P,p)$.
\end{cor}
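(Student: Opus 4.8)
The plan is to derive the corollary from Lemma~\ref{lem-arf-difference} together with the fact that a standard basis generates the orbifold fundamental group. Concretely, I would phrase the statement as a uniqueness assertion: if two $m$-Arf functions $\arf_1$ and $\arf_2$ take the same values on every element of some standard basis $v=\{a_1,b_1,\dots,a_g,b_g,c_{g+1},\dots,c_n\}$, then $\arf_1=\arf_2$ as functions on $\piorbO(P,p)$.

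By Lemma~\ref{lem-arf-difference} the difference $\arf_1-\arf_2$ factors through homology: there is a linear function $\ell:H_1(P;\z/m\z)\to\z/m\z$ with $(\arf_1-\arf_2)(x)=\ell([x])$ for every $x\in\piorbO(P,p)$, where $[x]$ denotes the class of $x$ in $H_1(P;\z/m\z)$. Since (Definition~\ref{def-standard-basis}) the elements of $v$ generate $\piorb(P,p)$, their classes $[a_i],[b_i],[c_j]$ generate $H_1(P;\z/m\z)$ as a $\z/m\z$-module. The hypothesis $\arf_1=\arf_2$ on $v$ says precisely that $\ell$ vanishes on this generating set, whence $\ell\equiv0$, and therefore $\arf_1-\arf_2\equiv0$ on all of $\piorbO(P,p)$. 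As the argument applies verbatim to any standard basis, the values on a single standard basis already pin down the Arf function.

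The only step that deserves attention is the passage from ``$v$ generates $\piorb(P,p)$'' to ``the classes of the elements of $v$ generate $H_1(P;\z/m\z)$''; I do not expect this to be a genuine obstacle, as it follows from surjectivity of the natural map from $\piorb(P,p)$ onto $H_1(P;\z/m\z)$ (abelianisation followed by reduction of coefficients). Everything else is formal, so the corollary drops out immediately once Lemma~\ref{lem-arf-difference} is in hand.
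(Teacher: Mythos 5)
Your proof is correct and is essentially the paper's own argument: the paper states this corollary without further proof as an immediate consequence of Lemma~\ref{lem-arf-difference}, and your derivation (the difference $\arf_1-\arf_2$ factors through a linear function on $H_1(P;\z/m\z)$, which vanishes because the classes of the standard basis elements generate that module) is precisely the intended reasoning. The point you flag — surjectivity of $\piorb(P,p)\to H_1(P;\z/m\z)$ — is indeed harmless, since the standard basis generates the orbifold fundamental group and homology classes of its elements therefore generate the homology module.
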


\begin{thm}
\label{thm-corresp}
Let $\lat$ be a Fuchsian group of signature~$(g:p_1,\dots,p_r)$ and $P=\hyp/\lat$ the corresponding orbifold.
Let $p\in P$.
There is a 1-1-correspondence between
\begin{enumerate}[1)]
\item hyperbolic Gorenstein automorphy factors of level~$m$ associated to the Fuchsian group~$\lat$.
\item lifts of~$\lat$ into $\Gm$.
\item $m$-Arf functions $\arf:\piorbO(P,p)\to\z/m\z$.
\end{enumerate}
\end{thm}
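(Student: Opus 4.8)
The plan is to assemble the theorem from correspondences already established in the excerpt rather than to build anything from scratch. The equivalence of (1) and (2) is precisely Proposition~\ref{autfac-lifts}, so it suffices to exhibit a bijection between the set of lifts of $\lat$ into $\Gm$ and the set of $m$-Arf functions on $\piorbO(P,p)$. The natural candidate is the assignment $\lats\mapsto\si_{\lats}$ of Definition~\ref{def-si-to-lats}: by Lemmata~\ref{lem-hsi-rules}, \ref{lem-inv} and~\ref{lem-conj} the restriction $\si_{\lats}=\hsi_{\lats}|_{\piorbO(P,p)}$ is an $m$-Arf function, so this map is well defined. I would then prove it is both injective and surjective.

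For injectivity, fix a standard basis $v=\{a_1,b_1,\dots,a_g,b_g,c_{g+1},\dots,c_{g+r}\}$ of $\piorb(P,p)$ and let $\bV$ be the sequential set generating $\lat$ that corresponds to $v$ via $\Phi$. Any lift $\lats$ is generated by the unique elements $A_i,B_i,C_j\in\lats$ mapping to the generators of $\bV$ under the covering map, and for a fixed element of $G$ the lift with a prescribed value of $\levm$ is unique (the fibre is a single $\z/m\z$-coset of the centre on which $\levm$ is a bijection). Hence $\lats$ is completely determined by the levels $\levm(A_i),\levm(B_i),\levm(C_j)$. By the commutativity of the diagram in Definition~\ref{def-hsi-to-lats} these levels are exactly the values $\si_{\lats}(a_i),\si_{\lats}(b_i),\si_{\lats}(c_j)$, so two lifts with equal associated Arf functions share the same generators and coincide.

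For surjectivity, start from an arbitrary $m$-Arf function $\arf$ and choose lifts $A_i,B_i,C_j$ of the elements of $\bV$ with $\levm(A_i)=\arf(a_i)$, $\levm(B_i)=\arf(b_i)$ and $\levm(C_j)=\arf(c_j)$. The key point is that the defining properties of an $m$-Arf function are exactly the relations demanded by Lemma~\ref{lem-liftonerel}. Property~5 of Definition~\ref{def-m-arf} gives $p_i\cdot\levm(C_{g+i})+1\equiv0\bmod m$, which by Lemma~\ref{lem-lift-glm} is equivalent to $C_{g+i}^{p_i}=e$; and Proposition~\ref{arf-prop-liftexist} gives $\sum_{j=g+1}^{g+r}\levm(C_j)\equiv(2-2g)-r\bmod m$, which by Lemma~\ref{lem-lift-glm} is equivalent to $\prod_{i=1}^g[A_i,B_i]\cdot\prod_{j=g+1}^{g+r}C_j=e$. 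Lemma~\ref{lem-liftonerel} then guarantees that the subgroup $\lats$ generated by these lifts is a lift of $\lat$ into $\Gm$. By construction $\si_{\lats}$ and $\arf$ agree on the standard basis $v$, and since an $m$-Arf function is determined by its values on a standard basis (Corollary~\ref{cor-arf-def-by-gen}), we conclude $\si_{\lats}=\arf$.

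The step requiring the most care is surjectivity, and precisely the verification that the two global constraints built into the notion of an $m$-Arf function---the elliptic order conditions and the commutator-product relation of Proposition~\ref{arf-prop-liftexist}---match exactly the two families of relations in Lemma~\ref{lem-liftonerel}. Implicit here is that these relations constitute a \emph{complete} set of defining relations for a co-compact Fuchsian group of signature $(g:p_1,\dots,p_r)$, so that no further relation of $\lat$ imposes an additional constraint on the chosen lifts; this completeness is what makes the homomorphism $\lats\to\lat$ in Lemma~\ref{lem-liftonerel} injective. Once injectivity and surjectivity of $\lats\mapsto\si_{\lats}$ are established, composing with Proposition~\ref{autfac-lifts} yields the three-fold 1-1-correspondence asserted in the theorem.
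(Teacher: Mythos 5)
Your proposal is correct and follows essentially the same route as the paper: Proposition~\ref{autfac-lifts} handles the equivalence of (1) and (2), and the bijection between lifts and $m$-Arf functions is established by lifting a sequential set with prescribed levels, invoking Proposition~\ref{arf-prop-liftexist} and Lemma~\ref{lem-lift-glm} to verify the relations of Lemma~\ref{lem-liftonerel}, and using the fact that an $m$-Arf function is determined by its values on a standard basis (Corollary~\ref{cor-arf-def-by-gen}, equivalently Lemma~\ref{lem-arf-difference}) to conclude. The only cosmetic difference is that the paper exhibits the explicit inverse map $\arf\mapsto\lats_{\arf}$ as a subset of $\Gm$, whereas you phrase the same content as injectivity and surjectivity of $\lats\mapsto\si_{\lats}$.
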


\begin{proof}
According to Proposition~\ref{autfac-lifts} there is a 1-1-correspondence
between hyperbolic Gorenstein automorphy factors of level~$m$ associated to the Fuchsian group~$\lat$ and the lifts of $\lat$ into $\Gm$.
In Definition~\ref{def-hsi-to-lats} we attached to any lift $\lats$ of $\lat$ into $\Gm$ an $m$-Arf function $\si_{\lats}$ on~$P$.
On the other hand we can attach to any $m$-Arf function $\arf$ a subset of~$\Gm$
$$\lats_{\arf}=\{g\in\Gm\st\pi(g)\in\lat,~\levm(g)=\arf(\Phi(\pi(g)))\},$$
where $\pi:\Gm\to G$ is the covering map.
It remains to prove that this subset of~$\Gm$ is actually a lift of~$\lat$.
Let
$$v=\{a_1,b_1,\dots,a_g,b_g,c_{g+1},\dots,c_{g+r}\}=\{d_1,\dots,d_{2g+r}\}$$
be a standard basis of $\piorb(P,p)$
and let~$\bV=\{\Phi^{-1}(d_1),\dots,\Phi^{-1}(d_{2g+r})\}$ be the corresponding sequential set.
Let~$\{D_j\}_{j=1,\dots,2g+r}$ be a lift of the sequential set~$\bV$,
i.e.\ $\pi(D_j)=\Phi^{-1}(d_j)$, such that $\levm(D_j)=\arf(d_j)$.
Then we obtain according to Proposition~\ref{arf-prop-liftexist} that
$$\sum\limits_{i=g+1}^n\,\levm(C_i)=\sum\limits_{i=g+1}^n\,\arf(c_i)=(2-2g)-(n-g)\mod\,m,$$
hence by Lemma~\ref{lem-lift-glm} we obtain
$$\prod\limits_{i=1}^g\,[A_i,B_i]\cdot\prod\limits_{i=g+1}^n\,C_i=e.$$
This and the fact that for any~$i=1,\dots,r$
$$
  p_i\cdot\levm(C_{g+i})+1
  =p_i\cdot\arf(c_{g+i})+1
  \equiv0\mod~m
$$
imply according to Lemma~\ref{lem-liftonerel} that
the subgroup $\lats$ of $\Gm$ generated by $V$ is a lift of $\lat$ into $\Gm$.
Let us compare the corresponding Arf function $\arf_{\lats}$ with the Arf function $\arf$.
We have
$$\arf_{\lats}(d_j)=\levm(D_j)=\arf(d_j)$$
for all~$j$ \ie the Arf functions $\arf_{\lats}$ and $\arf$ coincide on the standard basis~$v$.
Thus by Lemma~\ref{lem-arf-difference} the Arf functions $\arf_{\lats}$ and $\arf$ coincide on the whole $\piorbO(P,p)$.
From the definition of $\arf_{\lats}$ and $\lats_{\arf}$ we see that this implies that $\lats=\lats_{\arf}$,
hence $\lats_{\arf}$ is indeed a lift of~$\lat$ into~$\Gm$.
It is clear from the definitions that the mappings $\lats\mapsto\arf_{\lats}$ and $\arf\mapsto\lats_{\arf}$
are inverse to each other.
\end{proof}

\begin{cor}
\label{cor-arf-function-exists}
Let $P$ be a Riemann orbifold of signature $(g:p_1,\dots,p_r)$.
Let $v=\{a_1,b_1,\dots,a_g,b_g,c_{g+1},\dots,c_{g+r}\}$ be a standard basis of $\piorb(P,p)$.
An $m$-Arf function on~$\piorbO(P,p)$ exists if and only if the signature $(g:p_1,\dots,p_r)$
satisfies the liftability conditions described in Proposition~\ref{prop-lift-exists}.
Moreover, if the liftability conditions are satisfied then any possible tuple of $2g$ values in~$\z/m\z$
can be realised in a unique way as a set of values on~$a_i,b_i$ of an $m$-Arf function on~$\piorbO(P,p)$,
hence there are $m^{2g}$ different $m$-Arf functions on~$\piorbO(P,p)$.
\end{cor}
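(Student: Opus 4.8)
The plan is to deduce the corollary directly from the correspondence established in Theorem~\ref{thm-corresp} together with the existence and counting statements of Proposition~\ref{prop-lift-exists}. Theorem~\ref{thm-corresp} provides a bijection between $m$-Arf functions $\arf:\piorbO(P,p)\to\z/m\z$ and lifts $\lats$ of $\lat$ into $\Gm$, and under this bijection the value $\arf(d)$ on a basis element $d$ equals $\levm(D)$, where $D$ is the lift of $\Phi^{-1}(d)$ lying in $\lats$. The entire statement is therefore a translation of facts about lifts into facts about Arf functions, and I would carry out that translation step by step.

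For the existence part I would argue as follows. By Theorem~\ref{thm-corresp} an $m$-Arf function on $\piorbO(P,p)$ exists if and only if a lift of $\lat$ into $\Gm$ exists. Proposition~\ref{prop-lift-exists} asserts that such a lift exists precisely when the signature $(g:p_1,\dots,p_r)$ satisfies the liftability conditions, so the desired equivalence is immediate.

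For the realization and counting I would use that, under the bijection, the $2g$ values $\arf(a_1),\arf(b_1),\dots,\arf(a_g),\arf(b_g)$ correspond exactly to the levels $\levm(A_1),\levm(B_1),\dots,\levm(A_g),\levm(B_g)$ of the chosen lifts of $\bA_i,\bB_i$. Proposition~\ref{prop-lift-exists} shows that, once the liftability conditions hold, any choice of these $2g$ levels extends in a unique way to a lift of $\lat$, that distinct choices give distinct lifts, and that there are exactly $m^{2g}$ of them. Transporting this through the bijection shows that any tuple of $2g$ values in $\z/m\z$ is realized, uniquely, as the values of an $m$-Arf function on the $a_i,b_i$, and that there are $m^{2g}$ such functions in total.

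The one point requiring care — and the main, rather modest, obstacle — is the bookkeeping confirming that the remaining generator values $\arf(c_{g+i})$ are not free but are forced, so that the $2g$ values on the $a_i,b_i$ genuinely pin down the whole function. Here I would invoke Property~5 of Definition~\ref{def-m-arf}: for the elliptic generator $c_{g+i}$ of order $p_i$ we have $p_i\cdot\arf(c_{g+i})+1\equiv0\mod m$, and since $\gcd(p_i,m)=1$ this congruence has a unique solution. Combined with Corollary~\ref{cor-arf-def-by-gen}, which guarantees that an $m$-Arf function is determined by its values on a standard basis, this confirms both the uniqueness of the realization and that the count of $m$-Arf functions agrees with the count of $m^{2g}$ lifts in Proposition~\ref{prop-lift-exists}.
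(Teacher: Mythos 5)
Your proposal is correct and follows the same route as the paper, whose proof is simply that the statement follows immediately from Theorem~\ref{thm-corresp} and Proposition~\ref{prop-lift-exists}. The extra bookkeeping you supply (that the values $\arf(c_{g+i})$ are forced by Property~5 and that Corollary~\ref{cor-arf-def-by-gen} pins down the whole function from a standard basis) is exactly the content already built into that correspondence, so your write-up is a faithful, slightly more explicit version of the paper's argument.
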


\begin{proof}
The statement follows immediately from Theorem~\ref{thm-corresp} and Proposition~\ref{prop-lift-exists}.
\end{proof}

\section{Moduli spaces of Gorenstein singularities}

\label{sec-moduli-spaces}

We study the moduli space of Gorenstein quasi-homogeneous surface singularities (GQHSS).
Using Proposition~\ref{autfac-lifts}, we define the moduli space of GQHSS of level~$m$ as the space of conjugacy classes of subgroups~$\lats$ in~$\Gm$
such that the restriction of the covering map~$\Gm\to G=\PSL$ to $\lats$ is an isomorphism between $\lats$ and a Fuchsian group~$\lat$.
The projection~$\lats\mapsto\lat$ from the moduli space of GQHSS of level~$m$ to the moduli space of Riemann orbifolds is a finite ramified covering.

\myskip
\subsection{Topological classification of higher Arf functions}

\myskip
There is a 1-1-cor\-res\-pon\-dence (see Theorem~\ref{thm-corresp})
between automorphy factors of level~$m$ and $m$-Arf functions on~$\piorbO(P,p)$.
This correspondence allows us to reduce the problem of finding the number of connected components of the moduli space of GQHSS of level~$m$
to the problem of finding the number of orbits of the action of the group of autohomeomorphisms on the set of $m$-Arf functions.
We describe the orbit of an $m$-Arf function
under the action of the group of homotopy classes of surface autohomeomorphisms.

\myskip
Let $P$ be a Riemann orbifold of signature $(g:p_1,\dots,p_r)$.
Let $p\in P$.

\begin{mydef}
\label{def-arf-inv}
Let $\arf:\piorbO(P,p)\to\z/m\z$ be an $m$-Arf function.
We define the {\it Arf invariant\/} $\de=\de(P,\arf)$ of~$\arf$ as follows:
If~$g>1$ and $m$ is even then we set $\de=0$ if there is a standard basis $\{a_1,b_1,\dots,a_g,b_g,c_{g+1},\dots,c_n\}$ of the fundamental group $\piorb(P,p)$ such that
$$\sum\limits_{i=1}^g(1-\arf(a_i))(1-\arf(b_i))\equiv0\mod~2$$
and we set $\de=1$ otherwise.
If~$g>1$ and $m$ is odd then we set $\de=0$.
If~$g=0$ then we set $\de=0$.
If~$g=1$ then we set
$$\de=\gcd(m,p_1-1,\dots,p_r-1,\arf(a_1),\arf(b_1)),$$
where $\{a_1,b_1,c_2,\dots,c_{r+1}\}$ is a standard basis of the fundamental group $\piorb(P,p)$.
\end{mydef}

\begin{rem}
It is not hard to see that~$\de$ does not change under the transformations described in Lemma~\ref{lem-dehn}, i.e.\ it is indeed an invariant of the Arf function.
\end{rem}

\begin{proof}
Let $D$, $v$, $v'$, $\al_i$, $\be_i$, $\ga_i$, $\al_i'$, $\be_i'$, $\ga_i'$ be as in Lemma~\ref{lem-dehn}.
Let us first consider the case~$g>1$:
For a Dehn twist of type~1 we have
\begin{align*}
  (1-\al_1')(1-\be_1')
  &=(1-(\al_1+\be_1))(1-\be_1)\\
  &=(1-\al_1)(1-\be_1)-\be_1(1-\be_1)
  \equiv(1-\al_1)(1-\be_1)\mod\,2.
\end{align*}
For a Dehn twist of type~2 we have
\begin{align*}
  &(1-\al_1')(1-\be_1')+(1-\al_2')(1-\be_2')\\
  &=(1-\al_1)(1-\be_1+\al_1+\al_2+1)+(1-\al_2)(1-\be_2+\al_1+\al_2+1)\\
  &=(1-\al_1)(1-\be_1)+(1-\al_2)(1-\be_2)+(2-(\al_1+\al_2))((\al_1+\al_2)+1)\\
  &\equiv(1-\al_1)(1-\be_1)+(1-\al_2)(1-\be_2)\mod\,2.
\end{align*}
For a Dehn twist of type~3, since $m$ is even and~$p_1\cdot\ga_{g+1}+1\equiv0\mod\,m$, we have that $\ga_{g+1}$ is odd.
Then
$$
  (1-\al_g')(1-\be_g')
  =(1+\be_g)(1-\al_g+(\ga_{g+1}+1))
  \equiv(1+\be_g)(1-\al_g)
  \equiv(1-\be_g)(1-\al_g)
$$
since $\ga_{g+1}+1\equiv0\mod\,2$ and~$1+\be_g\equiv1-\be_g\mod\,2$.
Dehn twists of type~4 do not change $\sum_{i=1}^g(1-\arf(a_i))(1-\arf(b_i))$ since they only permute $(\al_k,\be_k)$ with $(\al_{k+1},\be_{k+1})$.
Dehn twists of type~5 do not change $\sum_{i=1}^g(1-\arf(a_i))(1-\arf(b_i))$ since they only permute $\ga_i$.

Let us now consider the case~$g=1$:
Dehn twists of types~2 and~4 involve pairs~$a_i,b_i$ and~$a_j,b_j$, i.e.\ they are not applicable in the case~$g=1$.
Dehn twist of type~5 only swaps the orders of two elliptic fixed points, hence it does not change~$\de$.
For a Dehn twist of type~1 we obtain $\al_1'=\al_1+\be_1$ and $\be_1'=\be_1$.
Thus
$$\gcd(\al_1',\be_1')=\gcd(\al_1+\be_1,\be_1)=\gcd(\al_1,\be_1)$$
and therefore $\gcd(m,p_1-1,\dots,p_r-1,\al_1',\be_1')=\gcd(m,p_1-1,\dots,p_r-1,\al_1,\be_1)$.
For a Dehn twist of type~3 we obtain $\al_1'=-\be_1$ and $\be_1'=\al_1-\ga_2-1$.
Let~$d$ be a common divisor of $m,p_1-1,\dots,p_r-1,\al_1,\be_1$, i.e
$$m\equiv\al_1\equiv\be_1\equiv0\mod\,d,\quad p_1\equiv\cdots\equiv p_r\equiv1\mod\,d.$$
We know that $p_1\cdot\ga_2+1\equiv0\mod\,m$, but~$m\equiv0\mod\,d$, hence $p_1\cdot\ga_2+1\equiv0\mod\,d$.
Since~$p_1\equiv1\mod\,d$, we obtain that $\ga_2+1\equiv0\mod\,d$.
Hence $d$ is a common divisor of $m,p_1-1,\dots,p_r-1,\al_1'=-\be_1,\be_1'=\al_1-(\ga_2+1)$.
Similarly every common divisor of $m,p_1-1,\dots,p_r-1,\al_1',\be_1'$
is a common divisor of $m,p_1-1,\dots,p_r-1,\al_1,\be_1$.
Thus
$$\gcd(m,p_1-1,\dots,p_r-1,\al_1',\be_1')=\gcd(m,p_1-1,\dots,p_r-1,\al_1,\be_1).\qedhere$$
\end{proof}

\begin{mydef}
\label{def-type-hyp-arf-func}
By the {\it type of the $m$-Arf function\/}~$(P,\arf)$ we mean the tuple
$$(g,p_1,\dots,p_r,\de),$$
where $\de$ is the Arf invariant of $\arf$ defined above.
\end{mydef}

\begin{lem}
\label{lem-normalise}
Let $\arf:\piorbO(P,p)\to\z/m\z$ be an $m$-Arf function.
\begin{enumerate}[(a)]
\item
If~$g>1$ then there is a standard basis $v=\{a_1,b_1,\dots,a_g,b_g,c_{g+1},\dots,c_n\}$
of $\piorb(P,p)$ such that
$$(\arf(a_1),\arf(b_1),\dots,\arf(a_g),\arf(b_g))=(0,\xi,1,\dots,1)$$
with $\xi\in\{0,1\}$.
If $m$ is odd then the basis can be chosen in such a way that $\xi=1$, i.e. so that
$$(\arf(a_1),\arf(b_1),\dots,\arf(a_g),\arf(b_g))=(0,1,1,\dots,1)$$
\item
If~$g=1$ then there is a standard basis $v=\{a_1, b_1, c_2,\dots,c_n\}$ of $\piorb(P,p)$ such that $(\arf(a_1),\arf(b_1))=(\de,0)$, where $\de$ is the Arf invariant of~$\arf$.
\end{enumerate}
\end{lem}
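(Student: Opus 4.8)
The plan is to read off, from Lemma~\ref{lem-dehn}, the explicit affine action of the generalised Dehn twists on the tuple of values $(\al_1,\be_1,\dots,\al_g,\be_g)=(\arf(a_1),\arf(b_1),\dots,\arf(a_g),\arf(b_g))\in(\z/m\z)^{2g}$, and then to exhibit a sequence of such twists bringing this tuple into the asserted normal form. Since by Corollary~\ref{cor-arf-affine} an $m$-Arf function is determined by its values on a standard basis, it suffices to normalise this tuple. The elliptic values $\ga_j=\arf(c_{g+j})$ are constrained by $p_j\ga_{g+j}+1\equiv0$, so $\ga_{g+j}\equiv-p_j^{-1}$ and each shift $\ga_{g+j}+1\equiv(p_j-1)p_j^{-1}$ will be the mechanism through which the orders enter the normalisation. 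I would treat the cases $g>1$ and $g=1$ separately, matching the two clauses of the statement.

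For $g>1$ the type~4 moves realise arbitrary permutations of the handles $(\al_i,\be_i)$, the type~1 move gives $\al_i\mapsto\al_i+\be_i$ (and, via its inverse, $\al_i\mapsto\al_i-\be_i$), the type~3 move interchanges the roles of $a_g$ and $b_g$ up to a shift, and, crucially, the type~2 move alters $\be_i$ and $\be_{i+1}$ simultaneously by $-(\al_i+\al_{i+1}+1)$. The $+1$ in this last shift is what lets one escape the $\gcd$ constraint of a single handle: I would first use it, together with the intra-handle moves, to produce a coordinate equal to $1$, then bootstrap (once a coordinate equals $1$, the type~1 move becomes a free $\pm1$ adjustment of its partner) to clear the handles $2,\dots,g$ to $(1,1)$ and the first handle to $(0,\xi)$ with $\xi\in\{0,1\}$. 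Finally I would pin down $\xi$: on the normal form $\sum_{i=1}^g(1-\al_i)(1-\be_i)\equiv(1-0)(1-\xi)\equiv1-\xi\bmod2$, so $\xi$ is forced by the Arf invariant $\de$ of Definition~\ref{def-arf-inv} when $m$ is even. When $m$ is odd the parity obstruction is vacuous (the quantity $\sum(1-\al_i)(1-\be_i)\bmod2$ is no longer a well-defined invariant), so a further twist connects the forms with $\xi=0$ and $\xi=1$ and one may arrange $\xi=1$.

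For $g=1$ only the moves of types~1, 3 and~5 are available, acting on the single pair $(\al_1,\be_1)$ by $(\al_1,\be_1)\mapsto(\al_1+\be_1,\be_1)$ and $(\al_1,\be_1)\mapsto(-\be_1,\al_1-\ga_2-1)$. These generate a Euclidean-algorithm reduction of $(\al_1,\be_1)$; together with the relation $m\equiv0$ and the injected shifts $\ga_j+1\equiv(p_j-1)p_j^{-1}$, the orbit of $(\al_1,\be_1)$ consists of the pairs whose entries and available shifts generate the ideal $\de\cdot\z/m\z$ with $\de=\gcd(m,p_1-1,\dots,p_r-1,\al_1,\be_1)$, and I would reduce to the representative $(\de,0)$. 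To make every $p_j-1$ (not only $p_1-1$) available, I would braid the cone points so that each elliptic generator in turn becomes the neighbour $c_2$ of the handle on which the type~3 twist acts; since $\de$ is symmetric in the $p_j$, this reordering is harmless. The invariance of $\de$ under all twists, already recorded in the remark following Definition~\ref{def-arf-inv}, shows that $(\de,0)$ cannot be improved, so this is exactly the normal form.

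The main obstacle I expect is the genus-$1$ case: realising the full $\gcd$ with all the orders $p_j-1$ requires moving cone points of differing orders past the handle, which is not one of the five listed generators and must be produced as a composition (a braid move) inside the mapping class group, after which one must check carefully — using $\ga_j\equiv-p_j^{-1}$ and $\gcd(p_j,m)=1$ — that the resulting shifts generate precisely $\de\cdot\z/m\z$ and nothing finer. The $g>1$ bookkeeping of the surviving $\z/2\z$ Arf invariant across the even/odd dichotomy, in particular the step that manufactures a coordinate equal to $1$, is the secondary delicate point.
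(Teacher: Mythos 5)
Your overall strategy is the one the paper itself uses: normalise the tuple of values via the affine action of the generalised Dehn twists computed in Lemma~\ref{lem-dehn}. The paper carries this out by invoking Lemmas~5.1 and~5.2 of~\cite{NP1} and only spells out the feature special to orbifolds (for even $m$ the relation $p_i\cdot\arf(c_{g+i})+1\equiv0$ forces every $\arf(c_{g+i})$ to be odd, so the step of~\cite{NP1} that merges $\xi=0$ with $\xi=1$ is blocked). Measured against that, your write-up has two genuine gaps, and they are exactly the two points you flag as delicate. First, for $g>1$ and $m$ odd you argue that $\sum(1-\al_i)(1-\be_i)$ taken modulo $2$ is no longer well defined and conclude that ``a further twist connects the forms with $\xi=0$ and $\xi=1$''. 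Non-existence of an invariant does not produce a connecting sequence of twists; one must be exhibited, and this is precisely the content the paper imports from~\cite{NP1}. It is not a formal consequence of the type~2 ``$+1$'' mechanism you describe: in small orbifold examples (say $m=3$, signature $(2:2,2)$) the chains of moves one finds pass through the type~3 move, so this step needs an actual computation.

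The second gap is more serious, because the repair you propose is impossible as stated. For $g=1$ you need every $p_j-1$ available as a shift, and you propose to ``braid the cone points so that each elliptic generator in turn becomes the neighbour $c_2$'', asserting that this braid is a composition of elements of the mapping class group. It is not: every autohomeomorphism of the orbifold sends exceptional points to exceptional points of the same order, which is exactly why the type~5 move carries the restriction $\ord(c_k)=\ord(c_{k+1})$. Moreover, no composition of the listed moves can substitute for it: for $g=1$ the actions of types~1 and~3 on $(\al_1,\be_1)$ preserve $d=\gcd(m,p_1-1,\al_1,\be_1)$, because the type~3 shift is $\ga_2+1\equiv(p_1-1)p_1^{-1}$, and type~5 can never move a value $\ga_{j+1}$ with $p_j\ne p_1$ into the slot that type~3 uses. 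Concretely, for $m=15$, signature $(1:16,4,4,4,4,4)$ and $(\arf(a_1),\arf(b_1))=(5,5)$ one has $\de=1$ but $d=5$, so the listed twists can never reach $(\de,0)=(1,0)$. What rescues the lemma is the idea you gesture at but do not justify: the half-twist of two points of different orders is an isomorphism onto the \emph{relabelled} orbifold, and since $\de$ is symmetric in the $p_j$ one may prove the statement for standard bases up to relabelling of the marked points; equivalently, one must enlarge the list in Lemma~\ref{lem-dehn} by the twist along a curve enclosing the handle together with $x_j$ (a point-pushing map), whose effect on values is $(\al_1,\be_1)\mapsto(-\be_1,\al_1-\ga_{j+1}-1)$. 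Until such a move is introduced and its effect on $\arf$ verified, the genus-one normal form $(\de,0)$ is not established.
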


\begin{proof}
The proof is along the lines of the proofs of Lemma~5.1 and Lemma~5.2 in~\cite{NP1}.
Using generalised Dehn twists of types~1,2 and~4 we can show that a basis can be chosen in the desired way.
The last step in the proof of Lemma~5.1 in~\cite{NP1} was to show
that if $m$ is even and~$\arf(c_{g+1})$ is even
then we can transform a basis with
$$(\arf(a_1),\arf(b_1),\dots,\arf(a_g),\arf(b_g))=(0,0,1,\dots,1)$$
into the basis with
$$(\arf(a_1),\arf(b_1),\dots,\arf(a_g),\arf(b_g))=(0,1,1,\dots,1).$$
However in the situation we are considering now we know
that $\arf(c_i)$ satisfies the equation~$p_i\cdot\arf(c_i)+1\equiv0\mod~m$.
Therefore if $m$ is even then $\arf(c_i)$ must be odd.
Hence this last reduction step does not apply in the case considered here.
\end{proof}

\begin{rem}
An autohomeomorphism of a surface $P$ induces an automorphism of the lifted Fuchsian group.
Let $\calA$ be the corresponding group of such automorphisms of lifts of Fuchsian groups.
On the other hand, any autohomeomorphism generates an element of~$\MathOpSp(2g,\z)$, where $g$ is the genus of~$P$.
Lemma~\ref{lem-normalise} implies that for two autohomeomorphisms
the corresponding elements in~$\calA$ differ if the corresponding elements in~$\MathOpSp(2g,\z_m)$ differ.
Thus we obtain a homomorphism~$f:\calA\to\MathOpSp(2g,\z_m)$.
Using generalised Dehn twists of types~1,2 and~4 we can show that $f$ is an epimorphism.
Lemma~\ref{lem-normalise} implies that $\ker(f)=d\cdot T$,
where $T$ is the group of all parallel translations on the affine space of all lifts.
Using Dehn twists of types~1--5 and Lemma~\ref{lem-normalise} we are able to determine the number~$d$.
If~$g>1$ then~$d=2$ if $m$ is even and $d=1$ otherwise.
If~$g=1$ then $d=\gcd(m,p_1-1,\dots,p_r-1,\arf(a_1),\arf(b_1))$.
\end{rem}

\begin{thm}
\label{thm-is-type}
A tuple $t=(g,p_1,\dots,p_r,\de)$ is the type of a hyperbolic $m$-Arf function
on a Riemann orbifold of signature $(g:p_1,\dots,p_r)$ if and only if it has the following properties:
\begin{enumerate}[(a)]
\item
The liftability conditions:
The orders~$p_1,\dots,p_r$ are prime with~$m$ and satisfy the condition
$$(p_1\cdots p_r)\cdot\left(\sum\limits_{i=1}^r\,\frac1{p_i}-(2g-2)-r\right)\equiv0\mod m.$$
\item
If $g>1$ then $\de\in\{0,1\}$.
\item
If $g>1$ and $m$ is odd then $\de=0$.
\item
If $g=1$ then $\de$ is a divisor of~$\gcd(m,p_1-1,\dots,p_r-1)$.
\item
If~$g=0$ then $\de=0$.
\end{enumerate}
\end{thm}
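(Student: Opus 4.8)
The plan is to prove the two implications separately: for necessity I would read each listed condition straight off the definition of the Arf invariant together with the existence criterion already proved, and for sufficiency I would construct, for each admissible type, an explicit $m$-Arf function realising it.

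For the forward direction, suppose $\arf$ is an $m$-Arf function on a Riemann orbifold $P$ of signature $(g:p_1,\dots,p_r)$ with Arf invariant $\de$. Condition~(a) is forced by the mere existence of $\arf$: by Corollary~\ref{cor-arf-function-exists} an $m$-Arf function exists only when the signature satisfies the liftability conditions of Proposition~\ref{prop-lift-exists}, which are precisely~(a). Conditions~(b), (c) and~(e) are immediate from Definition~\ref{def-arf-inv}, since for $g>1$ the invariant is defined to lie in $\{0,1\}$, is set to $0$ when $m$ is odd, and is set to $0$ when $g=0$. Condition~(d) is also built into the definition: for $g=1$ we have $\de=\gcd(m,p_1-1,\dots,p_r-1,\arf(a_1),\arf(b_1))$, which is visibly a divisor of $\gcd(m,p_1-1,\dots,p_r-1)$.

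For the converse I would assume (a)--(e) and build an $m$-Arf function of the prescribed type. By~(a) and Corollary~\ref{cor-arf-function-exists} at least one $m$-Arf function exists, and, crucially, every tuple of $2g$ values in $\z/m\z$ is realised by a unique $m$-Arf function as the values on the $a_i,b_i$ of a fixed standard basis. The construction then splits by genus. For $g=0$ there are no free values and every $m$-Arf function has $\de=0$, matching~(e). For $g=1$, condition~(d) gives $\de\mid\gcd(m,p_1-1,\dots,p_r-1)$; choosing the function with $\arf(a_1)=\de$ and $\arf(b_1)=0$ yields Arf invariant $\gcd(m,p_1-1,\dots,p_r-1,\de,0)=\de$, as required. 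For $g>1$ I would set $\arf(a_i)=\arf(b_i)=1$ for all $i$, so that $\sum_{i=1}^g(1-\arf(a_i))(1-\arf(b_i))\equiv 0\mod 2$ and hence $\de=0$ (this also covers the case $m$ odd, where~(c) forces $\de=0$); if $m$ is even and $\de=1$ is wanted, I would instead take $\arf(a_1)=\arf(b_1)=0$ and $\arf(a_i)=\arf(b_i)=1$ for $i\ge 2$, so that the sum equals $1\not\equiv 0\mod 2$ on this basis.

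The delicate point, and the step I expect to be the main obstacle, is verifying in the case $g>1$ with $m$ even that this last construction really has $\de=1$ rather than $\de=0$: Definition~\ref{def-arf-inv} sets $\de=0$ as soon as \emph{some} standard basis makes the sum even, so one must rule out all bases simultaneously. Here I would invoke the invariance of $\de$ under the generalised Dehn twists established in the Remark following Definition~\ref{def-arf-inv}, together with the fact (used in Lemma~\ref{lem-normalise}) that any two standard bases are joined by a finite sequence of such twists; this makes the parity of the sum a genuine invariant, so the single basis above already determines $\de=1$. A secondary but essential point is that each prescribed tuple of values comes from an actual $m$-Arf function and not an arbitrary labelling, which is exactly what Corollary~\ref{cor-arf-function-exists} guarantees under the liftability hypothesis~(a).
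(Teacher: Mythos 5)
Your proposal is correct and follows essentially the same route as the paper: necessity is read off from Definition~\ref{def-arf-inv} together with Corollary~\ref{cor-arf-function-exists}, and sufficiency is obtained by prescribing values on $a_i,b_i$ via Corollary~\ref{cor-arf-function-exists} (the paper uses $(0,1-\de,1,\dots,1)$ where you use all ones for $\de=0$, an immaterial difference) and then, for $g>1$, $m$ even, $\de=1$, invoking the Dehn-twist invariance of the parity of $\sum_{i=1}^g(1-\arf(a_i))(1-\arf(b_i))$ to rule out all other standard bases. You also correctly identified that last invariance step as the one genuinely delicate point, which is exactly how the paper closes the argument.
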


\begin{proof}
Let us first assume that the tuple $t$ is a type of a hyperbolic $m$-Arf function on a orbifold of signature $(g:p_1,\dots,p_r)$.
Then according to Corollary~\ref{cor-arf-function-exists} the signature $(g:p_1,\dots,p_r)$ satisfies the liftability conditions.
If $g>1$ and $m$ is odd then according to Lemma~\ref{lem-normalise} there is a standard basis $\{a_1,b_1,\dots,a_g,b_g,c_{g+1},\dots,c_{g+r}\}$ of $\piorbO(P,p)$ such that
$$(\arf(a_1),\arf(b_1),\dots,\arf(a_g),\arf(b_g))=(0,1,1,\dots,1),$$
hence $\de(P,\arf)=0$ by definition.
If~$g=1$ then $\de$ is a divisor of~$m,p_1-1,\dots,p_r-1$ by definition.
If~$g=0$ then $\de=0$ by definition.

\myskip\noindent
Now let us assume that $t=(g,p_1,\dots,p_r,\de)$ satisfies the conditions~(a)-(e).
Let $P$ be a Riemann orbifold of signature $(g:p_1,\dots,p_r)$ and let
$$\{a_1,b_1,\dots,a_g,b_g,c_{g+1},\dots,c_{g+r}\}$$
be a standard basis of~$\piorbO(P,p)$.
According to Corollary~\ref{cor-arf-function-exists} any tuple of $2g$ values in~$\z/m\z$
can be realised as a set of values on~$a_i,b_i$ of an $m$-Arf function on~$\piorbO(P,p)$,
In particular if~$g>1$ then for any~$\de\in\{0,1\}$ there exists an $m$-Arf function $\arf^{\de}$
such that $(\arf^{\de}(a_1),\arf^{\de}(b_1),\dots,\arf^{\de}(a_g),\arf^{\de}(b_g))=(0,1-\de,1,\dots,1)$
and if~$g=1$ then for any divisor~$\de$ of $m,p_1-1,\dots,p_r-1$ there exists an $m$-Arf function $\arf^{\de}$ such that $(\arf^{\de}(a_1),\arf^{\de}(b_1))=(\de,0)$.

\myskip\noindent
Let $g>1$.
If~$\de=0$ then the equation $\de(\arf^0)=0$ is satisfied by definition.
If~$\de=1$ and $m$ is even, it remains to prove that $\de(\arf^1)=1$.
To this end we recall that $\sum\limits_{i=1}^g(1-\arf(a_i))(1-\arf(b_i))\mod\,2$ is preserved under the Dehn twists
and hence is equal to~$1$ modulo~$2$ for any standard basis.

\myskip\noindent
Now let~$g=1$.
Then $\de(\arf^{\de})=\gcd(m,p_1-1,\dots,p_r-1,\de,0)=\de$ since $\de$ is a divisor of $\gcd(m,p_1-1,\dots,p_r-1)$. 
\end{proof}

\subsection{Teichm\"uller spaces of Fuchsian groups}

We recall the results on the moduli spaces of Fuchsian groups from~\cite{Zieschang:book}.

\myskip
Let $\Ggp$ be the group generated by the elements
$$v=\{a_1,b_1,\dots,a_g,b_g,c_{g+1},\dots,c_{g+r}\}$$
with defining relations
$$\prod\limits_{i=1}^g\,[a_i,b_i]\prod\limits_{i=g+1}^{g+r}\,c_i=1,\quad c_{g+1}^{p_1}=\cdots=c_{g+r}^{p_r}=1.$$

\myskip
We denote by $\tTgp$ the set of monomorphisms $\psi:\Ggp\to\Aut(\hyp)$
such that
$$\psi(v)=\{a_1^{\psi},b_1^{\psi},\dots,a_g^{\psi},b_g^{\psi},c_{g+1}^{\psi},\dots,c_{g+r}^{\psi}\}$$
is a sequential set of signature $(g;p_1,\dots,p_r)$.
Here we assume that $g>1$.

\myskip
The group $\Aut(\hyp)$ acts on $\tTgp$ by conjugation.
We set
$$\Tgp=\tTgp/\Aut(\hyp).$$

\myskip
We parametrise the space $\tTgp$ by the fixed points and shift parameters of the elements of the sequential sets $\psi(v)$.
We use here the following analogue of a version \cite{N1978moduli}, \cite{Nbook} of the Theorem of Fricke and Klein~\cite{FK}:

\begin{thm}

\myskip\noindent
The space $\Tgp$ is diffeomorphic to an open domain in
$$\r^{6g-6+2r}$$ 
which is homeomorphic to $\r^{6g-6+2r}$.
\end{thm}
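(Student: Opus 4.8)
The plan is to realise $\Tgp$ explicitly through the geometric data attached to the elements of a sequential set, and then to reduce the topological assertion to the cited version of the Theorem of Fricke and Klein. First I would fix the standard basis $v=\{a_1,b_1,\dots,a_g,b_g,c_{g+1},\dots,c_{g+r}\}$ and record, for each $\psi\in\tTgp$, the attracting and repelling fixed points on $\dd\hyp$ together with the shift parameter of every hyperbolic generator $a_i^\psi,b_i^\psi$, as well as the fixed point in $\hyp$ of every elliptic generator $c_{g+i}^\psi$. A hyperbolic element of $\Aut(\hyp)$ is pinned down by its two fixed points and its shift parameter, hence contributes three real coordinates, while an elliptic element of the prescribed order $p_i$ is pinned down by its single fixed point, hence contributes two. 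This produces an injective map from $\tTgp$ into $\r^{6g+2r}$.

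The dimension count then proceeds as follows. The $2g$ hyperbolic generators account for $6g$ coordinates and the $r$ elliptic generators for $2r$ coordinates, giving $6g+2r$ a priori parameters. The defining relation $\prod_{i=1}^g[a_i^\psi,b_i^\psi]\prod_{i=g+1}^{g+r}c_i^\psi=\id$ is an equation in the three-dimensional Lie group $\Aut(\hyp)=\PSL$; since the associated evaluation map is a submersion at the irreducible Fuchsian representations under consideration, this relation trims the parameter space by $3$. Finally the conjugation action of $\Aut(\hyp)$ is free, because the image $\psi(\Ggp)$ is a non-elementary co-compact Fuchsian group and therefore has trivial centraliser in $\PSL$; passing to $\Tgp=\tTgp/\Aut(\hyp)$ removes a further $3$ dimensions. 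The net dimension is $6g+2r-3-3=6g-6+2r$, and all the conditions cutting out the image (hyperbolicity of the $a_i^\psi,b_i^\psi$, distinctness of the relevant fixed points, and the cyclic ordering built into the notion of a sequential set) are open conditions, so the image is an open domain in $\r^{6g-6+2r}$.

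The step I expect to be the main obstacle is upgrading ``open domain'' to ``homeomorphic to $\r^{6g-6+2r}$'', that is, establishing that this Teichm\"uller space is a cell rather than merely an open set. For this I would invoke the analogue of the Theorem of Fricke and Klein recorded in~\cite{N1978moduli} and~\cite{Nbook}: one chooses a pants decomposition of the orbifold adapted to the signature $(g;p_1,\dots,p_r)$ and introduces the corresponding length and twist parameters, which range freely and furnish a global diffeomorphism of $\Tgp$ onto $\r_{>0}^{3g-3+r}\times\r^{3g-3+r}$, hence onto $\r^{6g-6+2r}$; here the cone points contribute no length parameters since their orders are fixed, which is exactly why the count agrees with the one above. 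Verifying that these length--twist coordinates are globally defined and compatible with the fixed-point/shift-parameter description is precisely the content of the quoted theorem, so once the parametrisation above is in place the proof is completed by citing it.
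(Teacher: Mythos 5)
Your proposal is correct and takes essentially the same route as the paper: the paper offers no proof of this theorem beyond the one-line remark that $\tTgp$ is parametrised by the fixed points and shift parameters of the elements of the sequential sets $\psi(v)$, quoting the statement itself as an analogue of the Fricke--Klein theorem from \cite{N1978moduli}, \cite{Nbook}. Your explicit parametrisation, dimension count and openness argument fill in what that remark leaves implicit, and---exactly as the paper does---you delegate the genuinely hard part, namely that the open domain is a cell, to the same cited result.
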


For an element $\psi:\Ggp\to\Aut(\hyp)$ of $\tTgp$ we write
$$\tMod^{\psi}=\tModgp^{\psi}=\{\al\in\Aut(\Ggp)\st\psi\circ\al\in\tTgp\}.$$
One can show that $\tMod^{\psi}$ does not depend on~$\psi$, hence we write $\tMod$ instead of~$\tMod^{\psi}$.
Let $I\tMod$ be the subgroup of all inner automorphisms of $\Ggp$ and let
$$\Modgp=\Mod=\tMod/I\tMod.$$
We now recall the description of the moduli space of Riemann orbifolds

\begin{thm}
\label{moduli-of-surfaces}

\myskip\noindent
The group $\Mod=\Modgp$ and the group of homotopy classes of orientation preserving autohomeomorphisms
of the orbifold of signature $(g:p_1,\dots,p_r)$ are naturally isomorphic.
The group $\Modgp$ acts naturally on $\Tgp$ by diffeomorphisms.
This action is discrete.
The quotient set
$$\Tgp/\Modgp$$
can be identified naturally with the moduli space $\Mgp$ of Riemann orbifolds of signature $(g:p_1,\dots,p_r)$.
\end{thm}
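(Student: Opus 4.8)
The plan is to recall and reassemble the classical Teichm\"uller theory of Fuchsian groups, now allowing torsion, in the form of the four assertions; most of the work is unwinding the definitions of $\tTgp$, $\tMod$ and the $\Aut(\hyp)$-action, the genuinely non-trivial inputs being two classical theorems that I shall cite rather than reprove. First I would set up the isomorphism between $\Mod=\Modgp$ and the group of homotopy classes of orientation preserving autohomeomorphisms of the orbifold $P$ of signature $(g:p_1,\dots,p_r)$. An orientation preserving autohomeomorphism $h$ of $P$ carries the marking to itself preserving the orders, hence induces an automorphism $h_*$ of $\piorb(P,p)\cong\Ggp$; a choice of arc from $p$ to $h(p)$ renders $h_*$ well defined up to an inner automorphism, so $h$ determines a class in $\tMod/I\tMod=\Mod$. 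Since $h_*$ takes standard bases to standard bases it lies in $\tMod$, and homotopic $h$ give the same class, so $h\mapsto[h_*]$ is a well defined homomorphism. Its surjectivity follows from the fact, recalled just before Lemma~\ref{lem-dehn}, that $\tMod$ is generated by the generalised Dehn twists, each of which is by construction induced by an autohomeomorphism; its injectivity is the orbifold version of the Dehn--Nielsen--Baer theorem (\cite{Zieschang:book}, \cite{Zieschang:1973}), namely that an autohomeomorphism inducing an inner automorphism of $\Ggp$ is homotopic to the identity.

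Next I would exhibit the action on $\Tgp$. The group $\tMod$ acts on $\tTgp$ by precomposition with the inverse, $\psi\mapsto\psi\circ\al^{-1}$, which lands in $\tTgp$ exactly because $\al^{-1}\in\tMod$; an inner automorphism $x\mapsto wxw^{-1}$ sends $\psi$ to its conjugate by $\psi(w)\in\Aut(\hyp)$, so $I\tMod$ acts trivially on $\Tgp=\tTgp/\Aut(\hyp)$ and the action descends to $\Modgp$. To see that it is by diffeomorphisms I would work in the coordinates on $\Tgp$ supplied by the preceding Fricke--Klein theorem, the fixed points and shift parameters of the elements of $\psi(v)$: precomposition with $\al$ replaces the generators by fixed words in them, and the fixed points and shift parameters of these words are real analytic functions of the original coordinates, since the relevant matrix entries are polynomial in them. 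Hence each element of $\Modgp$ acts as a real analytic diffeomorphism of $\Tgp$.

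It then remains to establish discreteness and to identify the quotient. Discreteness is the classical proper discontinuity of the mapping class group on Teichm\"uller space, recalled for orbifolds from \cite{Zieschang:book}: for a compact $K\subset\Tgp$ the hyperbolic structures parametrised by $K$ have uniformly bounded geometry, so only finitely many distinct markings, and hence only finitely many elements of $\Modgp$, can carry a point of $K$ back into $K$. For the quotient, Theorem~\ref{seqset-fuchgr} shows that the image $\lat=\psi(\Ggp)$ of any $\psi\in\tTgp$ is a Fuchsian group whose orbifold $\hyp/\lat$ has signature $(g:p_1,\dots,p_r)$, while Theorem~\ref{thm2.1inN} provides conversely a sequential set for every such Fuchsian group, so $\psi\mapsto\hyp/\lat$ is onto the set of isomorphism classes. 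Two sequential sets yield isomorphic orbifolds precisely when their images are conjugate in $\Aut(\hyp)$; after such a conjugation they become two markings $\psi,\psi'$ of the same group, related by $\psi'=\psi\circ\al$ with $\al=\psi^{-1}\circ\psi'\in\tMod$, so they determine the same point of $\Tgp/\Modgp$. Thus $\psi\mapsto\hyp/\lat$ induces the natural bijection $\Tgp/\Modgp\to\Mgp$, which is a homeomorphism onto $\Mgp$ with its natural topology.

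The main obstacles are exactly the two theorems I would invoke rather than prove: the orbifold Dehn--Nielsen--Baer theorem behind the injectivity in the first step, and the proper discontinuity of $\Modgp$ on $\Tgp$ behind the discreteness statement. Once these are granted, the remaining content is bookkeeping with the definitions together with the explicit Fricke--Klein parametrisation of $\Tgp$.
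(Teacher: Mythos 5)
The paper offers no proof of Theorem~\ref{moduli-of-surfaces} at all: the surrounding text introduces it with ``We recall the results on the moduli spaces of Fuchsian groups from~\cite{Zieschang:book}'' and ``We now recall the description of the moduli space of Riemann orbifolds,'' so the theorem is a wholesale citation of classical results. There is therefore no argument of the authors' to measure yours against; the fair comparison is with the cited literature, and on that score your reconstruction is a correct assembly of the standard theory. Your four steps --- realizing classes in $\Modgp$ by autohomeomorphisms, descending the $\tMod$-action on $\tTgp$ to a $\Modgp$-action on $\Tgp$ that is by diffeomorphisms in the Fricke--Klein coordinates, proper discontinuity, and identification of $\Tgp/\Modgp$ with $\Mgp$ via Theorems~\ref{seqset-fuchgr} and~\ref{thm2.1inN} --- are exactly the content of the results in \cite{Zieschang:book} that the paper invokes, and your division into two hard inputs (orbifold Dehn--Nielsen--Baer and proper discontinuity) plus definitional bookkeeping is the standard one.

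One wrinkle deserves flagging. For surjectivity of $h\mapsto[h_*]$ you invoke ``the fact, recalled just before Lemma~\ref{lem-dehn}, that $\tMod$ is generated by the generalised Dehn twists.'' That is not what the paper recalls there: it states that the group of \emph{homotopy classes of autohomeomorphisms} of $P$ is generated by the homotopy classes of generalised Dehn twists (compare~\cite{Zieschang:1973}), which is a statement about the homeomorphism side, not about the subgroup $\tMod$ of $\Aut(\Ggp)$. The statement you actually need --- that every class in $\Modgp$ is a product of the Dehn-twist automorphisms, equivalently the Dehn--Nielsen surjectivity --- is precisely the half of the isomorphism you are trying to establish, so as written this step is circular. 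It is rescued by citing it directly from \cite{Zieschang:book}, where generation of the automorphism group of a cocompact Fuchsian group by such elementary transformations is proved independently of the realization theorem; since your whole proof is citation-based, this is a misattribution rather than a fatal gap, but the citation should point at Zieschang, not at the paper's remark.
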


\subsection{Connected components of the moduli space}

\label{components}

\begin{mydef}
We denote by~$\Smgp(t)=\Smgp(g,p_1,\dots,p_r,\de)$ the set of all GQHSS of level~$m$ and signature~$(g:p_1,\dots,p_r)$
such that the associated $m$-Arf function is of type~$t=(g,p_1,\dots,p_r,\de)$.
\end{mydef}

\begin{thm}
\label{top-type-comp}
Let $t=(g,p_1,\dots,p_r,\de)$ be a tuple that satisfies the conditions of Theorem~\ref{thm-is-type},
\ie the space $\Smgp(t)$ is not empty.
Then the space $\Smgp(t)$ is homeomorphic to $\Tgp/\Modmgp(t)$, where $\Tgp$ is homeomorphic to~$\r^{6g-6+2r}$
and $\Modmgp(t)$ acts on $\Tgp$ as a subgroup of finite index in the group $\Modgp$.
\end{thm}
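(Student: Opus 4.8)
The plan is to realise $\Smgp(t)$ as a quotient of the Teichm\"uller space $\Tgp$ by the stabiliser of a single Arf function, and then to invoke the Fricke--Klein parametrisation recalled above.

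First I would describe the moduli space $\Smgp$ of all GQHSS of level~$m$ and signature $(g:p_1,\dots,p_r)$ as a quotient of $\Tgp\times\Arf^{P,m}$. A point $\psi\in\tTgp$ is a marked Fuchsian group: it fixes both a Fuchsian group $\lat=\psi(\Ggp)$ and a standard basis of $\piorb(P,p)$, so by Corollary~\ref{cor-arf-function-exists} the set $\Arf^{P,m}$ of $m$-Arf functions is identified, via the values on $a_i,b_i$, with the finite set $(\z/m\z)^{2g}$, independently of $\psi$. By Theorem~\ref{thm-corresp} a lift $\lats$ of $\lat$ into $\Gm$ is the same datum as an $m$-Arf function, so the space of marked lifts is $\tTgp\times\Arf^{P,m}$, a disjoint union of $m^{2g}$ copies of $\tTgp$. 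Conjugation by $\Aut(\hyp)$ acts on the first factor and trivially on the second, since in these coordinates the level function, hence the Arf function, is conjugation invariant by Lemma~\ref{lem-conj}; passing to conjugacy classes of lifts therefore yields $\Tgp\times\Arf^{P,m}$. Changing the marking is the action of $\Modgp=\tMod/I\tMod$, which acts on $\Tgp$ as in Theorem~\ref{moduli-of-surfaces} and on $\Arf^{P,m}$ through the generalised Dehn twist formulae of Lemma~\ref{lem-dehn}, with $I\tMod$ acting trivially on both factors. This gives a homeomorphism
$$\Smgp\cong(\Tgp\times\Arf^{P,m})/\Modgp,$$
compatible with the finite ramified covering $\Smgp\to\Mgp=\Tgp/\Modgp$.

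Next I would restrict to type $t$. By the Remark following Definition~\ref{def-arf-inv} the Arf invariant, and hence the type, is preserved by every generalised Dehn twist, so the subset $\Arf^{P,m}(t)\subset\Arf^{P,m}$ of Arf functions of type $t$ is $\Modgp$-invariant and $\Smgp(t)\cong(\Tgp\times\Arf^{P,m}(t))/\Modgp$. The crucial point is that $\Modgp$ acts transitively on the finite set $\Arf^{P,m}(t)$. This is exactly the content of the normalisation Lemma~\ref{lem-normalise}: every $m$-Arf function of type $t$ can be carried by a sequence of Dehn twists to a standard basis on which its values form a fixed tuple depending only on $t$, namely $(0,1-\de,1,\dots,1)$ on the $a_i,b_i$ when $g>1$ and $(\de,0)$ when $g=1$, the values on the $c_j$ being forced uniquely by the elliptic relations $p_i\cdot\arf(c_{g+i})+1\equiv0$. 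Since the Dehn twists generate $\Modgp$ and are invertible, any two Arf functions with the same normal form lie in a single $\Modgp$-orbit, which establishes transitivity.

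Finally I would conclude by the orbit--stabiliser principle. Fix $\arf_0\in\Arf^{P,m}(t)$ and set $\Modmgp(t)=\operatorname{Stab}_{\Modgp}(\arf_0)$. Because $\Modgp$ acts transitively on the finite set $\Arf^{P,m}(t)$, the inclusion $x\mapsto(x,\arf_0)$ descends to a homeomorphism
$$\Tgp/\Modmgp(t)\xrightarrow{\ \cong\ }(\Tgp\times\Arf^{P,m}(t))/\Modgp\cong\Smgp(t),$$
and $[\Modgp:\Modmgp(t)]=\#\Arf^{P,m}(t)$ is finite. The action of $\Modmgp(t)$ on $\Tgp$ is discrete, being the restriction of the discrete $\Modgp$-action of Theorem~\ref{moduli-of-surfaces}, and $\Tgp\cong\r^{6g-6+2r}$ by the Fricke--Klein theorem quoted above. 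I expect the main obstacle to be precisely the transitivity of the $\Modgp$-action on $\Arf^{P,m}(t)$, the step that upgrades the combinatorial invariant $\de$ into a complete invariant of the orbit; by comparison, matching the quotient topology with the natural topology on conjugacy classes of lifts is routine bookkeeping layered on top of the Fricke--Klein parametrisation.
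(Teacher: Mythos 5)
Your proposal is correct and is essentially the paper's own argument in different packaging: the paper defines a map $\Tgp\to\Smgp(t)$ by assigning to each marking $\psi$ the normalised Arf function $\arf_{\psi}$ (surjectivity coming from the normalisation Lemma~\ref{lem-normalise} via Theorem~\ref{thm-is-type}) and identifies its fibres with orbits of the stabiliser $\Modmgp(t)$, which is precisely your orbit--stabiliser argument on $(\Tgp\times\Arf^{P,m}(t))/\Modgp$. Both proofs rest on the same ingredients --- Theorem~\ref{thm-corresp}, the Dehn-twist normalisation of Lemma~\ref{lem-normalise} (your transitivity step), invariance of the type under the transformations of Lemma~\ref{lem-dehn}, and the Fricke--Klein parametrisation together with Theorem~\ref{moduli-of-surfaces} --- so there is no substantive difference in route.
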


\begin{proof}
Let us consider an element~$\psi$ of the space $\Tgp$.
By definition $\psi$ is an homomorphism $\psi:\Ggp\to\Aut(\hyp)$.
To the homomorphism $\psi$ we attach an orbifold $P_{\psi}=\hyp/\psi(\Ggp)$,
a standard basis
$$v_{\psi}=\{a^{\psi}_1,b^{\psi}_1,\dots,a^{\psi}_g,b^{\psi}_g,c^{\psi}_{g+1},\dots,c^{\psi}_{g+r}\}$$
of $\piorb(P_{\psi},p)$
and an $m$-Arf function $\arf_{\psi}$ on this surface given by
\begin{align*}
  &(\arf_{\psi}(a_1^{\psi}),\arf_{\psi}(b_1^{\psi}))=(\de,0)\quad\text{if}\quad g=1,\\
  &(\arf_{\psi}(a_1^{\psi}),\arf_{\psi}(b_1^{\psi}),\arf_{\psi}(a_2^{\psi}),\arf_{\psi}(b_2^{\psi}),\dots,\arf_{\psi}(a_g^{\psi}),\arf_{\psi}(b_g^{\psi}))\\
  &
  \begin{aligned}
  &=(0,1-\de,1,\dots,1)\quad\text{if}\quad g>1.
  \end{aligned}
\end{align*}
By Theorem~\ref{thm-corresp}, the $m$-Arf function $\arf_{\psi}$ on the orbifold $P_{\psi}$
corresponds to a lift of $\psi(\Ggp)$ into~$\Gm$.
The correspondence $\psi\mapsto \de_{\psi}$ defines a map
$$\Tgp\to\Smgp(t).$$
According to Theorem~\ref{thm-is-type} this map is surjective.
Let $\Modmgp(t)$ be the subgroup of $\Aut(P_{\psi})=\Modgp$ that preserves the $m$-Arf function $\arf_{\psi}$.
For any point in $\Smgp(t)$ its pre-image in $\Tgp$ consists of an orbit of the subgroup~$\Modmgp(t)$.
Thus
$$\Smgp(t)=\Tgp/\Modmgp(t).\qedhere$$
\end{proof}

\bigskip\noindent
Summarizing the results of Theorems~\ref{thm-is-type} and \ref{top-type-comp} we obtain the following

\bigskip
\begin{thm}
\label{main-thm}

\noindent
\begin{enumerate}[1)]
\item
Two hyperbolic GQHSS are in the same connected component of the space of all hyperbolic GQHSS if and only if they are of the same type.
In other words, the connected components of the space of all hyperbolic GQHSS are those sets $\Smgp(t)$ that are not empty.
\item
The set $\Smgp(t)$ is not empty if and only if $t=(g,p_1,\dots,p_r,\de)$ has the following properties:
\begin{enumerate}[(a)]
\item
The orders~$p_1,\dots,p_r$ are prime with~$m$ and satisfy the condition
$$(p_1\cdots p_r)\cdot\left(\sum\limits_{i=1}^r\,\frac1{p_i}-(2g-2)-r\right)\equiv0\mod m.$$
\item
If $g>1$ and $m$ is odd then $\de=0$.
\item
If $g=1$ then $\de$ is a divisor of~$\gcd(m,p_1-1,\dots,p_r-1)$.
\item
If $g=0$ then $\de=0$.
\end{enumerate}
\item
Any connected component~$\Smgp(t)$ of the space of all hyperbolic GQHSS of level~$m$ and signature~$(g:p_1,\dots,p_r)$ is homeomorphic to
$$\r^{6g-6+2r}/\Modmgp(t),$$
where $\Modmgp(t)$ is a subgroup of finite index in the group~$\Modgp$ and acts discretely on $\r^{6g-6+2r}$.
\end{enumerate}
\end{thm}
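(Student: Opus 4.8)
The plan is to assemble the statement from the three structural results already established: the correspondence of Theorem~\ref{thm-corresp}, the classification of admissible types in Theorem~\ref{thm-is-type}, and the description of each stratum in Theorem~\ref{top-type-comp}. Parts 2) and 3) are essentially direct readings of these, while part 1) requires one additional point-set-topological argument to promote the strata $\Smgp(t)$ to genuine connected components.

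For part 2), recall that by Theorem~\ref{thm-corresp} a hyperbolic GQHSS of level~$m$ and signature $(g:p_1,\dots,p_r)$ is the same datum as an $m$-Arf function on the corresponding orbifold, and the type of the GQHSS is by definition the type of that Arf function. Hence $\Smgp(t)\ne\emptyset$ if and only if there exists an $m$-Arf function of type~$t$, which is precisely the content of Theorem~\ref{thm-is-type}. I would only need to note that the conditions (a)--(d) listed here coincide with conditions (a)--(e) there: condition (b) of Theorem~\ref{thm-is-type}, that $\de\in\{0,1\}$ when $g>1$, is automatic from Definition~\ref{def-arf-inv} and so need not be repeated. Part 3) is then verbatim Theorem~\ref{top-type-comp}: it gives $\Smgp(t)\cong\Tgp/\Modmgp(t)$ with $\Tgp$ homeomorphic to $\r^{6g-6+2r}$ and $\Modmgp(t)$ a finite-index subgroup of $\Modgp$ acting discretely.

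For part 1), I would first observe that each nonempty stratum $\Smgp(t)$ is connected: by Theorem~\ref{top-type-comp} it is a quotient of $\Tgp\cong\r^{6g-6+2r}$, and a continuous surjective image of a connected space is connected. The strata are pairwise disjoint and their union is the whole moduli space of level~$m$ and signature $(g:p_1,\dots,p_r)$, since every GQHSS has a well-defined type; here I use that the Arf invariant~$\de$ is independent of the chosen standard basis, that is, invariant under the generalised Dehn twists, as recorded in the remark following Definition~\ref{def-arf-inv}. It then remains to see that this finite disjoint partition into connected sets is the partition into connected components, for which it suffices that each $\Smgp(t)$ be open. This follows because the type is locally constant on the moduli space: the signature is a discrete invariant, and along any continuous deformation the values of the Arf function on a continuously varying standard basis lie in the discrete set $\z/m\z$, so the formulae of Definition~\ref{def-arf-inv} force $\de$ to remain constant. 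A finite partition into connected open sets coincides with the decomposition into connected components, which yields both directions of the ``if and only if'' in part 1).

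The genuinely new work is entirely contained in the cited theorems, so the only real obstacle here is bookkeeping: checking that the hypotheses of Theorems~\ref{thm-is-type} and~\ref{top-type-comp} match the data of a single type~$t$, and verifying that the invariance of~$\de$ is strong enough to make the type a locally constant function on the moduli space rather than merely a $\Modgp$-orbit invariant. Once local constancy of the type is in hand, the identification of the sets $\Smgp(t)$ with the connected components is a formal consequence of the connectivity of~$\r^{6g-6+2r}$.
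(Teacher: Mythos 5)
Your proposal is correct and follows essentially the same route as the paper: the paper presents Theorem~\ref{main-thm} simply as a summary of Theorems~\ref{thm-is-type} and~\ref{top-type-comp} (combined via the correspondence of Theorem~\ref{thm-corresp}), exactly as you do. The only difference is that you make explicit the point-set argument for part 1) --- connectedness of each nonempty $\Smgp(t)$ as a continuous image of $\Tgp\cong\r^{6g-6+2r}$, plus local constancy of the type to get openness --- details the paper leaves implicit.
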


\begin{rem}
{\bf $\mathbb{Q}$-Gorenstein singularities:}
A normal isolated singularity of dimension at least~$2$ is {\it $\q$-Gorenstein\/}
if there is a natural number~$r$ such that the divisor~$r\cdot\KX$ is defined
on a punctured neighbourhood of the singular point by a function.
Here $\KX$ is the canonical divisor of $X$.
According to~\cite{Pr:qgor}, hyperbolic $\q$-Gorenstein quasi-homogeneous surface singularities 
are in 1-to-1 correspondence with groups of the form~$C^*\times\lats$,
where $C^*$ is a lift  of a finite cyclic group of order~$r$ into~$\Gm$
and $\lats$ is a lift of a Fuchsian group~$\lat$ into~$\Gm$.
The lift of a finite cyclic group is unique,
hence hyperbolic $\q$-Gorenstein quasi-homogeneous surface singularities are
are in 1-to-1 correspondence with lift of a Fuchsian group into~$\Gm$.
Thus the moduli space of hyperbolic $\q$-Gorenstein quasi-homogeneous surface singularities
coincides with the moduli space of hyperbolic Gorenstein quasi-homogeneous surface singularities
as described in Theorem~\ref{main-thm}.
\end{rem}

\begin{rem}
{\bf Spherical and Euclidean Automorphy Factors:}
For a spherical Gorenstein automorphy factor~$(\cpxprj,\lat,\bdl)$
the group of automorphisms is $\Aut(\pln)=\Aut(\cpxprj)=\PSU(2)$.
The discrete (and hence) finite subgroups of $\PSU(2)$
are the cyclic groups, the dihedral groups and the symmetry groups of the regular polyhedra,
i.e.\ the tetrahedral, octahedral and icosahedral groups.
The corresponding singularities are $A_k$, $D_k$, $E_6$, $E_7$, $E_8$.
For a Euclidean Gorenstein automorphy factor~$(\cpxpln,\lat,\bdl)$
the group~$\lat$ is contained in the translation subgroup of~$\Aut(\cpxpln)$
and can be identified with a sublattice~$\z\cdot 1+\z\cdot\tau$ of the additive group~$\c$,
where~$\tau\in\c$ and~$\Im(\tau)>0$, see~\cite{Dolgachev:1983}.
The corresponding singularities are $\tilde E_6$, $\tilde E_7$, $\tilde E_8$.
All GQHSS other than $A_k$, $D_k$, $E_6$, $E_7$, $E_8$, $\tilde E_6$, $\tilde E_7$, $\tilde E_8$
belong to the class of hyperbolic GQHSS, which is studied in this paper.
\end{rem}



\def\cprime{$'$}
\providecommand{\bysame}{\leavevmode\hbox to3em{\hrulefill}\thinspace}
\providecommand{\MR}{\relax\ifhmode\unskip\space\fi MR }
\providecommand{\MRhref}[2]{%
  \href{http://www.ams.org/mathscinet-getitem?mr=#1}{#2}
}
\providecommand{\href}[2]{#2}

\end{document}